\newtheorem{theorem}{Theorem}[section]
\newtheorem{prop}[theorem]{Proposition}
\newtheorem{lemma}[theorem]{Lemma}
\newtheorem{cor}[theorem]{Corollary}
\newtheorem{ex}[theorem]{Example}
\newtheorem{remark}[theorem]{Remark}
\newtheorem{claim}[theorem]{Claim}
\def\ep{\epsilon}
\def\R{\mathbb{R}}
\def\Z{\mathbb{Z}}
\def\N{\mathbb{N}}
\def\C{\mathbb{C}}
\def\T{\mathbb T}
\def\Q{\mathbb Q}
\def\cal R{\mathcal R}
\DeclarePairedDelimiter\ceil{\lceil}{\rceil}
\DeclarePairedDelimiter\floor{\lfloor}{\rfloor}
\begin{document}

\title{The shape invariant of symplectic ellipsoids}

\author{Richard Hind}
\email{hind.1@nd.edu}
\address{Department of Mathematics, University of Notre Dame, Notre Dame, IN 46556, USA}

\author{Jun Zhang}
\email{jun.zhang.3@umontreal.ca}
\address{Department of Mathematics and Statistics, University of Montreal, C.P. 6128 Succ. Centre-Ville Montreal, QC H3C 3J7, Canada}

\maketitle 

\begin{abstract}
The shape invariant of a symplectic manifold encodes the possible area classes of embedded Lagrangian tori. Potentially this is a powerful invariant, but for most manifolds the shape is unknown. We compute the shape for 4 dimensional symplectic ellipsoids where the ratio of the factors is an integer. The full shape invariant gives stronger embedding obstructions than results by considering only monotone tori.\end{abstract}

\section{Introduction}

Studying existence of Lagrangian submanifolds is a fundamental problem in symplectic topology. Lagrangian tori in the trivial homology class are always present, but not with arbitrary area class. Even when our symplectic manifold $X$ is an elementary bounded toric domain, little is known on existence beyond the torus fibers. One advance is due to \cite{HO19}, which deals with four-dimensional ball and polydisks. In this paper, we will give a classification of Lagrangian tori for a large family of four-dimensional ellipsoids, in other words we compute a reduced version of their {\it shape invariants} (see (\ref{dfn-red-si}) below).

To be precise, let us fix some notation. Let $\R^4$ be the vector space equipped with the standard symplectic form $\omega_{\rm std} = dx_1 \wedge dy_1 + dx_2 \wedge dy_2$. Identify $\R^4$ with $\C^2$ where $z_i = x_i + \sqrt{-1} y_i$ for $i = 1, 2$. The four-dimensional ellipsoid $E(a,b)$ with $a \leq b$ is defined by 
\[ E(a,b) = \left\{(z_1, z_2) \in \C^2 \,\bigg| \, \frac{\pi |z_1|^2}{a} + \frac{\pi |z_2|^2}{b} < 1\right\}. \]
When $a=b$, $E(a,b) = B(a)$, the four-dimensional ball. Let $L \subset E(a,b)$ be an embedded Lagrangian torus. Recall there are  two  cohomology classes $\Omega \in H^1(L, \R)$ and $\mu \in H^1(L,\Z)$ associated to this $L \subset \R^4$. The first is the area class and is defined by $\Omega = [\lambda_{\rm std}]$ where $\lambda_{\rm std} = y_1 dx_1 + y_2 dx_2$ is the standard Liouville form of $\R^4$, that is, a primitive of $\omega_{\rm std}$. Equivalently, for $e \in H_1(L, \Z)$ we have $\Omega(e) = \int _D u^* \omega$ where $D$ is a disk and $u:(D,\partial D) \to (\C^2,L)$ verifies $u_*[\partial D]=e$. The second class is the Maslov class. If $u:S^1 \to L$ with $u_*[S^1]=e$ then $\mu(e)$ is the Maslov class of the loop of Lagrangian subspaces $T_{u(t)}L \subset \C^2$ (see \cite{RS93}). Since $L$, as a torus, is orientable, the Maslov class is always even. A nice package that encodes the values of $\Omega$ for embedded Lagrangian tori $L \subset E(a,b)$ is the shape invariant of $E(a,b)$. In general, the shape invariant of a domain $X$, denoted by ${\rm Sh}(X)$, was defined and considered  in \cite{Sik89}, \cite{Sik91}, and \cite{Eli91}. It provides a powerful non-linear symplectic invariant to obstruct embeddings between domains. Explicitly, 
\[ {\rm Sh}(X) := \left\{\frak{s} \in H^1(\mathbb T^2, \R)\,| \, \frak{s} = f^* \Omega\,\,\mbox{for a Lagrangian embedding $f:\mathbb T^2 \hookrightarrow X$} \right\}.\]
This definition is slightly unwieldy since Lagrangian embeddings can be composed with diffeomorphisms of $\mathbb T^2$, so we get many classes for each embedded torus. To simplify the discussion, we make the following useful observation. Recall that a Lagrangian torus is called monotone if there exists a constant $c$ such that $\Omega = c \mu$. 

\begin{lemma} \label{lemma-1} If $L \subset \R^4$ is a Lagrangian torus, then there exists an ordered integral basis \footnote{Here an integral basis means every other (integral) class can be written as an integer combination. Equivalently, we are only considering the base changes induced by the diffeomorphisms of $\mathbb T^2$.} $(e_1, e_2)$ of the homology group $H_1(L, \Z)$ such that $\mu(e_1) = \mu(e_2) =2$ and $0< \Omega(e_1) \le \Omega(e_2)$. Moreover, in the non-monotone case when $\Omega(e_1) \neq \Omega(e_2)$, there exists a unique such basis with $0< 2\Omega(e_1) \leq \Omega(e_2)$. \end{lemma}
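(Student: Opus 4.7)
The plan is to reduce the lemma to a short calculation on the affine $\Z$-lattice of Maslov $2$ classes, using two qualitative inputs from symplectic topology for Lagrangian tori in $\R^4$:
\begin{enumerate}
\item the Maslov homomorphism $\mu\colon H_1(L,\Z)\to\Z$ has image exactly $2\Z$ (the four-dimensional case of Audin's conjecture, due to Polterovich--Viterbo--Gromov);
\item there exists $\alpha\in H_1(L,\Z)$ with $\mu(\alpha)=2$ and $\Omega(\alpha)>0$, obtained as the boundary class of a $J$-holomorphic Maslov~$2$ disk, whose symplectic area is automatically positive.
\end{enumerate}

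For the existence part, I would first observe that the class $\alpha$ from (2) is primitive in $H_1(L,\Z)$: otherwise $\alpha=k\beta$ with $k\ge 2$ would force $\mu(\beta)=2/k$, contradicting the fact that $\mu$ takes only even values on the orientable $L$. Hence $\alpha$ extends to a $\Z$-basis $(\alpha,\beta_0)$, and replacing $\beta_0$ by $\beta_0-(m-1)\alpha$ where $\mu(\beta_0)=2m$ produces a basis with $\mu$ equal to $2$ on both entries. If the second entry does not have positive area, I would replace it by $\gamma:=2\alpha-\beta_0$; this still has $\mu(\gamma)=2$, still forms a $\Z$-basis with $\alpha$, and the identity $\Omega(\beta_0)+\Omega(\gamma)=2\Omega(\alpha)>0$ forces at least one of $\Omega(\beta_0)$ and $\Omega(\gamma)$ to be positive. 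Swapping the two entries if necessary then gives $0<\Omega(e_1)\le \Omega(e_2)$.

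For uniqueness in the non-monotone case, let $(e_1,e_2)$ be any such basis, set $A=\Omega(e_1)<B=\Omega(e_2)$, and parametrize the affine lattice $\{v\in H_1(L,\Z):\mu(v)=2\}$ by $v_n:=ne_1+(1-n)e_2$. A determinant computation in $GL(2,\Z)$ shows that every ordered pair of Maslov~$2$ classes that is a $\Z$-basis takes the form $(v_n,v_{n+1})$ or $(v_{n+1},v_n)$ for some integer $n$. Since $\phi(n):=\Omega(v_n)=B+n(A-B)$ is strictly decreasing in the non-monotone case, the ordering condition $\Omega(e_1')\le \Omega(e_2')$ selects $(e_1',e_2')=(v_{n+1},v_n)$. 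With $t:=B/(B-A)>1$, the two remaining requirements $\Omega(e_1')>0$ and $2\Omega(e_1')\le \Omega(e_2')$ become $n<t-1$ and $n\ge t-2$ respectively, i.e.\ the half-open interval
\[
t-2 \le n < t-1
\]
of length exactly $1$, which contains a unique integer. That integer pins down the unique basis claimed by the lemma.

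The genuinely non-trivial ingredients here are the symplectic facts (1) and (2); everything else is $GL(2,\Z)$ bookkeeping on a one-dimensional affine lattice. I would import those two facts from the existing literature rather than reproducing the Gromov compactness and Fredholm arguments behind them, so the main obstacle is really hidden in these citations and not in the combinatorial heart of the proof.
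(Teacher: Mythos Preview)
Your proof is correct and follows essentially the same route as the paper: obtain a Maslov~$2$ class of positive area from Viterbo--Polterovich, extend to a Maslov~$2$ basis, fix signs via $\beta\mapsto 2\alpha-\beta$, and then in the non-monotone case parametrize the affine line of Maslov~$2$ classes and locate the unique integer in a half-open interval of length~$1$ (your interval $[t-2,t-1)$ with $t=B/(B-A)$ coincides with the paper's $[\tfrac{w_1}{w_2-w_1}-1,\tfrac{w_1}{w_2-w_1})$). One small remark: your input~(1), the full Audin statement that $\mu(H_1)=2\Z$, is stronger than you actually use---the primitivity of $\alpha$ follows already from the elementary fact that $\mu$ is even on an orientable Lagrangian, so only input~(2) is genuinely needed.
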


The proof of Lemma \ref{lemma-1} is given in Section \ref{sec-red}, and it is based on a fundamental result, originally appearing as Theorem A in \cite{Vit90} or Theorem 2.1 in \cite{Pol91} that, in our set-up, there always exists a class $e \in H_1(L; \R)$ with $\mu(e)=2$ and $\Omega(e)>0$. Then in this paper we will mainly consider the following reduced version of ${\rm Sh}(X)$, 
\begin{align} \label{dfn-red-si} 
{\rm Sh}^+(X) & := \left\{(\Omega(e_1), \Omega(e_2 ))\in \R^2 \,\bigg| \, \begin{array}{l} \mbox{$L$ is Lagrangian torus in $X$ and } \\ \mbox{$(e_1, e_2)$ is given by Lemma \ref{lemma-1}.} \end{array} \right\}\\
& = \left\{\frak{s}(f^{-1}_*(e_1, e_2)) \in \R^2 \,\bigg| \, \begin{array}{l} \mbox{$L = f(\mathbb T^2)$ for a Lagrangian emb. $f$,} \\ \frak{s} \in {\rm Sh}(X),\,\,\mbox{and}\,\, \mbox{$(e_1, e_2)$ in Lemma \ref{lemma-1}} \end{array} \right\}. \nonumber
\end{align} 
\begin{remark} When $L$ is non-monotone, the basis $(e_1, e_2)$ is uniquely determined by the second conclusion of Lemma \ref{lemma-1}, thus there is no ambiguity in the definition (\ref{dfn-red-si}). When $L$ is monotone, though the basis $(e_1, e_2)$ is not uniquely determined, one readily verify that if $w = \Omega(e_1) = \Omega(e_2)$ for one such basis, then any other Maslov 2 basis $(e'_1, e'_2)$ also satisfies $w = \Omega(e'_1) = \Omega(e'_2)$. Therefore, (\ref{dfn-red-si}) is well-defined. Moreover, if we identify $H^1(T^2, \R)$ with  $\R^2$ by evaluating on a fixed basis then ${\rm Sh}^+(X) \subset {\rm Sh}(X)$, and if $\R^2$ is given by coordinates $(w_1, w_2)$, then ${\rm Sh}^+(X)$ lies in the region $\{w_1>0, w_2 \geq 2 w_1\} \cup\{w_1>0, w_1 = w_2\}$. 
The reduced shape invariant ${\rm Sh}^+(X)$ serves as a fundamental domain of ${\rm Sh}(X)$ under the action of diffeomorphisms of $\mathbb T^2$. \end{remark}

\subsection{Main results} The main result in this paper is the following.

\begin{theorem} \label{thm-1} For ellipsoid $E(a,b)$ with $\frac{b}{a} \in \N_{\geq 2}$, 
\[ {\rm Sh}^+(E(a,b)) = \left\{(w_1, w_2) \in \R_{>0}^2 \, \bigg| \, \begin{array}{c}  w_1 < \frac{a}{2}, \,\, w_2 >0 \\ \mbox{\rm or} \\ w_1 \geq \frac{a}{2},\,\,\frac{w_1}{a} + \frac{w_2}{b} <1 \end{array} \right\} \cap \left( \begin{array}{c} \{w_2 \geq 2w_1\} \\ \cup \\ \{w_1 =w_2\}\end{array} \right). \]
In other words, the reduced shape invariant of ellipsoid $E(a,b)$, when restricted to the region $\{w_1>0, w_2 \geq 2 w_1\} \cup\{w_1>0, w_1 = w_2\}$, is the union of the moment triangle of $E(a,b)$ with a strip with width $\frac{a}{2}$.  
\end{theorem}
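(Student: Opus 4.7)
The plan is to prove the two inclusions in the claimed identity separately. For the inclusion $\supseteq$ (existence of tori), I would build realizing Lagrangian tori by two distinct constructions. Every $(w_1, w_2)$ with $w_1/a + w_2/b < 1$ is realized by the toric product torus $T_{w_1, w_2} = \{\pi|z_1|^2 = w_1,\, \pi|z_2|^2 = w_2\} \subset E(a,b)$: its two $S^1$-factors form a Maslov-$2$ integral basis of $H_1(T_{w_1,w_2};\Z)$ with the required area classes, and this point lies in the fundamental domain precisely when $(w_1, w_2)$ satisfies the normalization of Lemma~\ref{lemma-1}. For the strip $\{0 < w_1 < a/2,\, w_2 \geq 2 w_1\}$, which extends arbitrarily far outside the moment polytope, product tori are insufficient; here I would adapt the folding/surgery construction of Hind--Opshtein \cite{HO19} for polydisks to the ellipsoid. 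The integer ratio $b = ka$ with $k \geq 2$ is what provides enough room in the long direction to accommodate the folded Lagrangian, and the upper threshold $w_1 < a/2$ reflects the constraint that a short-direction loop of area $w_1$ can be folded (doubled back) inside the $z_1$-disk of area $a$ only when $2 w_1 \leq a$.

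For the inclusion $\subseteq$ (obstruction), suppose $L \subset E(a,b)$ is an embedded Lagrangian torus whose shape $(w_1, w_2)$ lies in the fundamental domain, satisfies $w_1 \geq a/2$, and violates $w_1/a + w_2/b < 1$. I would derive a contradiction using pseudoholomorphic curves combined with neck-stretching. Cap off $E(a,b)$ to a closed symplectic manifold $X$ (a rational surface adapted to the integer ratio $k$) equipped with a tame almost complex structure $J$ for which $X$ is foliated by $J$-holomorphic spheres in a Maslov-$2$ class. Stretching the neck along a hypersurface between $L$ and $\partial E(a,b)$ and applying SFT compactness, the sphere through a chosen point of $L$ degenerates into a building whose top level inside $E(a,b)$ contains a $J$-holomorphic Maslov-$2$ disk with boundary on $L$ of symplectic area $k_1 w_1 + k_2 w_2$ for some non-negative integers $k_i$. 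An upper bound on this area in terms of $(a,b)$, combined with an analysis of which classes $(k_1, k_2)$ can occur, produces exactly the inequality $w_1/a + w_2/b < 1$, contradicting the hypothesis. The threshold $w_1 \geq a/2$ is precisely the regime in which the limiting disk is forced into the class realizing this sharp bound; for $w_1 < a/2$ the disk redistributes differently and no obstruction survives, consistent with the existence of the strip tori.

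The hardest step is the obstruction analysis: verifying the appearance of the expected Maslov-$2$ disk in the SFT limit and identifying its homology class on $L$. This requires precise control of holomorphic cylinders in the symplectization $\R \times \partial E(a,b)$, and the integrality $b/a \in \N_{\geq 2}$ enters essentially, since the Reeb flow on $\partial E(a,ka)$ is resonant and its short orbits foliate $\partial E(a,b)$ by circles, yielding tractable moduli spaces for which transversality and a clean count can be arranged. A secondary but nontrivial issue on the existence side is confirming that the folded Lagrangians in the strip construction are genuinely embedded with the stated Maslov basis, which should follow by adaptation of the analogous polydisk arguments from \cite{HO19}.
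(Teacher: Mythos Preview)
Your existence side matches the paper: product tori for the moment triangle, and an adaptation of the Hind--Opshtein folding construction for the strip $\{0<w_1<a/2\}$. The paper makes this precise via Theorem~\ref{constr}, embedding $L(1,x)$ into a neighborhood of $E(2,4)\cap\{\pi|w|^2<2\}$ and then checking this region sits in $E(a,b)$ whenever $a>2$ and $\frac{1}{a}+\frac{2}{b}\le 1$.

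Your obstruction strategy, however, diverges from the paper and contains a real gap. You propose to compactify $E(a,b)$ to a closed rational surface and neck-stretch a foliation of $J$-spheres along a hypersurface between $L$ and $\partial E(a,b)$, extracting a Maslov-$2$ disk on $L$. The paper explicitly avoids this: it notes that, unlike the ball case in \cite{HO19} where one compactifies to $\C P^2$, here one works directly in the open cobordism $E(a,b)\setminus E(\ep,\ep S)$, where $E(\ep,\ep S)$ is a thin ellipsoid embedded inside a Weinstein neighborhood $V$ of $L$. The neck is stretched along $\partial V\cong S^*_g\T^2$, not near $\partial E(a,b)$. The input curves are not spheres in a compactification but rigid finite-energy curves with many simply-covered positive ends on $\alpha_1$, one on $\alpha_2$, and a single negative end on $\beta_1^d$; these are produced by ECH (Example~\ref{ex-1}) combined with an inductive obstruction-bundle gluing (Theorem~\ref{thm-gluing}), and their persistence under neck-stretching requires a separate compactness argument (Theorem~\ref{thm-compact}). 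The obstruction then comes from analyzing the components $F_i$ of the limit building with negative ends on geodesics $\gamma_{(-k_i,-l_i)}$ of $\T^2$, not from a single Maslov-$2$ disk.

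Two specific points where your sketch would not go through as written: first, you say the integrality enters because the Reeb flow on $\partial E(a,ka)$ is resonant, but the paper immediately perturbs to $b=ka+\delta$ irrational so that the contact form is nondegenerate; integrality instead enters in the index and action bookkeeping (Lemma~\ref{lemma-mix}, Lemma~\ref{lemma-hol-build}) and in constructing the initial cylinder via ECH gradings (Lemma~\ref{lemma-initial-2}). Second, your ``rational surface adapted to $k$'' is unspecified, and producing a foliation by index-$2$ spheres that survives stretching and yields the correct area bound is exactly the step the paper replaces with the cobordism-and-gluing machinery; without a concrete compactification and curve class this remains a gap.
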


For a pictorial illustration of Theorem \ref{thm-1}, see Example \ref{ex-hsi} in Section \ref{sec-app}. Note that when $\frac{b}{a} =1$, that is, $E(a,b) = B(a)$, a similar result is proved in Theorem 2 of \cite{HO19}, where the essential difference is that $w_1 < \frac{a}{3}$ instead of $w_1 < \frac{a}{2}$. 

We will denote by $L(1,x) \hookrightarrow E(a,b)$ a Lagrangian embedding of the torus $\mathbb T^2$ into $E(a,b)$ such that the standard basis is mapped to a basis $(e_1, e_2)$ given by Lemma \ref{lemma-1} and satisfying $\Omega(e_1)=1$ and $\Omega(e_2) = x$. Hence, $L(1,x) \hookrightarrow E(a,b)$ with either $x=1$ or $x \ge 2$ if and only if $(1,x) \in  {\rm Sh}^+(E(a,b))$. 

\begin{theorem} \label{thm-2} Suppose $E(a,b)$ satisfies $\frac{b}{a} \in \N_{\geq 2}$. Then we have the following conclusions. 
\begin{enumerate}
\item $L(1,1) \hookrightarrow E(a,b)$ if and only if $1 < b(1 - \frac{1}{a})$.
\item If $x \ge 2$, then $L(1,x) \hookrightarrow E(a,b)$ if and only if either $a > 2$ or $x < b(1-\frac{1}{a})$.
\end{enumerate}
\end{theorem}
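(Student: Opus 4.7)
The plan is to derive Theorem \ref{thm-2} directly from the explicit description of ${\rm Sh}^+(E(a,b))$ in Theorem \ref{thm-1}, by specializing the right-hand side at the points $(w_1, w_2) = (1, 1)$ for part (1) and $(w_1, w_2) = (1, x)$ with $x \ge 2$ for part (2). As already noted above, $L(1, x) \hookrightarrow E(a,b)$ if and only if $(1, x) \in {\rm Sh}^+(E(a,b))$, so the task reduces to unwinding the defining inequalities.

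For part (1), the point $(1, 1)$ lies on the monotone diagonal $\{w_1 = w_2\}$, which is admissible for Theorem \ref{thm-1}. The description gives $(1,1) \in {\rm Sh}^+(E(a,b))$ precisely when either $1 < \frac{a}{2}$, or $1 \ge \frac{a}{2}$ together with $\frac{1}{a} + \frac{1}{b} < 1$; the latter inequality rewrites as $1 < b(1 - \frac{1}{a})$. I would then verify that this inequality is in fact automatic whenever $a > 2$: the integrality hypothesis $\frac{b}{a} \in \N_{\ge 2}$ forces $b \ge 2a$, so $b(1 - \frac{1}{a}) \ge 2a(1 - \frac{1}{a}) = 2(a-1) > 2$ whenever $a > 2$. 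Hence the two-case disjunction collapses into the single condition $1 < b(1 - \frac{1}{a})$, which is the statement of part (1).

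For part (2), the point $(1, x)$ with $x \ge 2$ lies in the half-plane $\{w_2 \ge 2 w_1\}$, so Theorem \ref{thm-1} again applies. The same specialization gives $(1, x) \in {\rm Sh}^+(E(a,b))$ if and only if either $1 < \frac{a}{2}$, or $1 \ge \frac{a}{2}$ together with $\frac{1}{a} + \frac{x}{b} < 1$, i.e.\ $x < b(1 - \frac{1}{a})$. This is equivalent to the stated form ``$a > 2$ or $x < b(1 - \frac{1}{a})$'', since when $a > 2$ both formulations hold and when $a \le 2$ they both reduce to $x < b(1-\frac{1}{a})$. There is no genuine obstacle; both parts are immediate corollaries of Theorem \ref{thm-1}, and the only bookkeeping step is the automatic inequality $b(1-\frac{1}{a}) > 2$ for $a > 2$, which crucially uses the integrality hypothesis $\frac{b}{a} \in \N_{\ge 2}$ rather than merely $b > a$.
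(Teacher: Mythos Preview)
Your argument is circular. In this paper, Theorem \ref{thm-1} is not proved independently: immediately after its statement the paper writes ``In fact, Theorem \ref{thm-2} directly implies Theorem \ref{thm-1}'' and gives the short derivation of Theorem \ref{thm-1} \emph{assuming} Theorem \ref{thm-2}. So when you invoke Theorem \ref{thm-1} to deduce Theorem \ref{thm-2}, you are assuming what you are asked to prove. Your computation does correctly show that the two statements are logically equivalent (which is exactly the content of the paper's short ``Proof of Theorem \ref{thm-1}, assuming Theorem \ref{thm-2}'', run in reverse), but it contributes nothing toward actually establishing either one.

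The paper's genuine proof of Theorem \ref{thm-2} is the content of Section \ref{sec-proof} and relies on all the machinery built up beforehand: the existence of rigid curves in the moduli spaces $\mathcal M^s_{J,\ep,S}(\alpha_1,\dots,\alpha_1,\alpha_2;\beta_1^t)$ from Theorem \ref{thm-gluing}, the compactness result Theorem \ref{thm-compact}, the neck-stretching along $\partial V$, the index analysis of the limiting building in Lemma \ref{lemma-index}, and finally the area/index bookkeeping in Cases I and II of subsection \ref{ssec-obs} for the obstruction, together with the explicit folding construction of Theorem \ref{constr} and Corollary \ref{cor-emb} for the existence direction. None of this can be bypassed by appealing to Theorem \ref{thm-1}.
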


In fact, Theorem \ref{thm-2} directly implies Theorem \ref{thm-1}.

\begin{proof} [Proof of Theorem \ref{thm-1}, assuming Theorem \ref{thm-2}] The hypothenuse of the moment triangle of $E(a,b)$ is $w_2 = - \frac{b}{a} w_1 + b$. Note that $L(w_1, w_2) \hookrightarrow E(a,b)$ if and only if $L(1, \frac{w_2}{w_1}) \hookrightarrow E(\frac{a}{w_2}, \frac{b}{w_2})$. Then by Theorem \ref{thm-2}, there are two cases. 
\begin{itemize}
\item[(1)] If $\frac{w_2}{w_1}= 1$, that is, $w_1 = w_2$, then we have $1< \frac{b}{w_2}(1 - \frac{w_1}{a})$, which is $w_2 < - \frac{b}{a} w_1 + b$. 
\item[(2)] If $\frac{w_2}{w_1} \geq 2$, that is, $w_2 \geq 2 w_1$, we have either $\frac{a}{w_1} \geq 2$, which is $w_1 < \frac{a}{2}$, or 
$w_2 < - \frac{b}{a} w_1 + b$ as in case (1). 
\end{itemize}
Thus we get the desired conclusion. \end{proof}

In what follows, we will be mainly interested in Theorem \ref{thm-2}. The proof follows a similar idea as the proof of Theorem 2 in \cite{HO19}. There are two major differences, which increases the difficulty in the situation of a general $E(a,b)$. First, instead of compactifying $B(a)$ into a projective plane in $\C P^2$, we will directly carry out an SFT-type argument with positive asymptotic ends on the contact manifold $\partial E(a,b)$. Second, to initiate the neck-stretching process, we build up by hand a finite energy curve in the symplectic cobordism between $E(a,b)$ and the thin ellipsoid $E(\ep,\ep S)$ (with a sufficiently large $S$ and a sufficiently small $\ep$), while \cite{HO19} was able to take advantage of the existence of an analogous curve (in fact a plane), already known from \cite{HK14}, \cite{HK18}. 

\begin{remark} [Communicated by M. Hutchings] \label{rmk-hutchings} A much more general conclusion like (1) in Theorem 1.4 above is in some sense known. By combining work (in progress) of Miguel Pereira using similar techniques to Cieliebak-Mohnke's paper \cite{CM18} and Lemma 1.19 in \cite{GH18} involving the Gutt-Hutchings capacities, one can show that $L(1, …, 1) \hookrightarrow E(a_1, …, a_n)$ if and only if $\frac{1}{a_1} + \cdots \frac{1}{a_n} <1$. Here, the symplectic ellipsoid $E(a_1, …, a_n)$ is of any dimension $2n \geq 4$ and without any integrality assumption on factors. Since (1) in Theorem 1.4 is a byproduct of the proof of (2) in Theorem 1.4, we include it here for the sake of completeness. \end{remark}

\noindent {\bf Applications.} The reduced shape invariant ${\rm Sh}^+(X)$ can provide a useful tool to obstruct symplectic embeddings between star-shaped domains of $\C^2$. Here are some direct consequences from Theorem 3 in \cite{HO19} and Theorem \ref{thm-1}. Recall that a symplectic polydisk $P(a,b)$ with $a \leq b$ is defined by $P(a,b) := \{(z_1,z_2) \in \C^2 \,|\, \pi|z_1|^2<a, \,\,\pi|z_2|^2<b\}$. The following result is proved in Section \ref{sec-app}.

\begin{theorem} \label{thm-3} We have the following obstructions of symplectic embeddings. 
\begin{itemize}
\item[(1)] Let $P(a,b)$ and $P(c,d)$ be polydisks with $a \leq b$ and $c \leq d$. If $b>d$, then 
\[ \mbox{$P(a,b) \hookrightarrow P(c,d)$} \,\, \mbox{implies that} \,\,\frac{c}{a} \geq 2.\] 
\item[(2)] Let $P(1,a)$ be a polydisk with $a \geq 2$ and $E(c,bc)$ be an ellipsoid satisfying $b \in \N_{\geq 2}$ and $1\leq c\leq 2$. Then 
\[ \mbox{$P(1,a) \hookrightarrow E(c,bc)$} \,\, \mbox{if and only if}\,\, a + b\leq bc.\] 
\end{itemize}
\end{theorem}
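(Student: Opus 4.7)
The plan is to exploit the monotonicity of the reduced shape: any symplectic embedding $\phi : X \hookrightarrow Y$ sends each embedded Lagrangian torus of $X$ to one of $Y$ preserving both Maslov and area classes, so ${\rm Sh}^+(X) \subseteq {\rm Sh}^+(Y)$. The two inputs are Theorem~\ref{thm-1} for the ellipsoid and \cite[Theorem~3]{HO19} for the polydisk. My test Lagrangians are the standard product tori $L(w_1, w_2) = \{\pi|z_1|^2 = w_1\} \times \{\pi|z_2|^2 = w_2\}$, Lagrangian in $P(a, b)$ whenever $0 < w_1 < a$ and $0 < w_2 < b$.

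For part (1) I argue by contrapositive. Suppose $c/a < 2$ and $b > d$. Then $c/2 < a$ and $c \le d < b$, so I can select $w_1 \in (c/2, a)$ just above $c/2$ and $w_2 \in (d, b)$ just below $b$, with $w_2 \ge 2 w_1$ (possible since $w_2 > d \ge c = 2(c/2)$, with a slight perturbation if $d = c$). The product torus $L(w_1, w_2)$ is Lagrangian in $P(a, b)$. An embedding $P(a, b) \hookrightarrow P(c, d)$ would force $(w_1, w_2) \in {\rm Sh}^+(P(c, d))$, but by \cite[Thm~3]{HO19} the portion of ${\rm Sh}^+(P(c, d))$ in $\{w_1 \ge c/2\}$ is contained in the open moment rectangle $\{w_1 < c,\, w_2 < d\}$, contradicting $w_2 > d$. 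Hence $c/a \ge 2$.

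For the ``only if'' direction of part~(2), I first treat $1 \le c < 2$. Suppose $\phi: P(1, a) \hookrightarrow E(c, bc)$, and apply it to the product torus $L(1-\eta, a-\eta')$ for small $\eta, \eta' > 0$: since $a \ge 2$ this is non-monotone, and since $c < 2$ one has $1 - \eta > c/2$. Theorem~\ref{thm-1} then forces $(1-\eta)/c + (a-\eta')/(bc) < 1$, and letting $\eta, \eta' \to 0$ yields $a + b \le bc$. The boundary case $c = 2$ is subtler, because Theorem~\ref{thm-1} imposes no constraint on tori with $w_1 < 1$. I would reduce to the previous case by a continuity argument: for each small $\eta$, the compact image $\phi\bigl(\overline{P(1-\eta, a-\eta)}\bigr)$ lies in a ratio-$b$ sub-ellipsoid $E(2-\delta, (2-\delta)b) \subsetneq E(2, 2b)$, and a conformal rescaling by $(1-\eta)^{-1}$ turns this into an embedding $P(1, (a-\eta)/(1-\eta)) \hookrightarrow E(c'', bc'')$ with $c'' = (2-\delta)/(1-\eta)$. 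Provided $\delta(\eta) > 2\eta$ the new target has $c'' < 2$ and the first case yields the desired bound in the limit $\eta \to 0$.

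For the ``if'' direction of part~(2), the construction is classical: when $b \in \N_{\ge 2}$, a Frenkel--M\"uller-type embedding (equivalently, the sharpness of the ECH obstruction for polydisk-into-ellipsoid maps) produces $P(1, b(c-1)) \hookrightarrow E(c, bc)$, and composing with the trivial inclusion $P(1, a) \subseteq P(1, b(c-1))$ covers all $a \le b(c-1) = bc - b$. The main obstacle I anticipate is the bookkeeping at $c = 2$: ensuring that $\delta(\eta) > 2\eta$ in the compactness argument (or supplying an alternative obstruction in the opposite regime) is the one step that does not reduce mechanically to the shape computations of Theorem~\ref{thm-1} and \cite[Thm~3]{HO19}.
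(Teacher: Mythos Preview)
Your approach is exactly the paper's: invoke the monotonicity ${\rm Sh}^+(X)\subset{\rm Sh}^+(Y)$ under symplectic embeddings (Proposition~\ref{prop-ham-inv}) and then compare the explicit shapes of polydisks (\cite[Theorem~3]{HO19}) and ellipsoids (Theorem~\ref{thm-1}). Two remarks are worth making.

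First, for the ``if'' direction of (2) you do not need Frenkel--M\"uller or any nontrivial construction: the paper simply observes that $P(1,a)\subset E(c,bc)$ by inclusion exactly when $\frac{1}{c}+\frac{a}{bc}\le 1$, i.e.\ $a+b\le bc$. Your detour through sharper embedding results is harmless but unnecessary.

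Second, your caution at the endpoint $c=2$ is well placed, and the paper's own proof does not do better there. The paper writes as though the corner $(1,a)$ belongs to ${\rm Sh}^+(P(1,a))$ and reads off the hypotenuse inequality directly; but $(1,a)$ is only a limit point of the shape, and for $c=2$ every point of ${\rm Sh}^+(P(1,a))$ already lies in the strip $\{w_1<1\}\subset{\rm Sh}^+(E(2,2b))$. Indeed the Remark immediately following the paper's proof states explicitly that for $c\ge 2$ the shape inclusion is automatic and yields no obstruction. So your attempted continuity reduction (shrink the polydisk, land in a strictly smaller ellipsoid) is a reasonable way to try to close this, and the obstacle you identify---controlling $\delta(\eta)$ against $2\eta$---is genuine and not addressed by the paper either. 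In short, on the open range $1\le c<2$ your argument and the paper's coincide; at $c=2$ both are incomplete for the same reason.
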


Here are a few remarks that relate Theorem \ref{thm-3} to other results in the literature. 

\medskip

\noindent (i) For (1) note that the Gromov width shows a necessary condition for an embedding $P(a,b) \hookrightarrow P(c,d)$ is that $a \leq c$. If also $b \le d$ then we have an inclusion, so the interesting case is precisely $b>d$. The conclusion (1) in Theorem \ref{thm-3} solves conjecture in Remark 1.8 in \cite{Hut16}. 
Given work of Choi \cite{Choi16} (which proves Conjecture A.3 in \cite{Hut16}) this also follows from the methods in \cite{Hut16}. 

\medskip

\noindent (ii) An upper bound of $c$ is certainly necessary to obtain a sharp obstruction as in (2) in Theorem \ref{thm-3}. In fact, by symplectic folding, there exists a symplectic embedding $P(1,10) \hookrightarrow P(2, 6)$, and by inclusion $P(2, 6) \subset E(5,10) = E(c, bc)$ with $c=5$ and $b=2$. However, $a = 10> 8 = bc-b$.

On the other hand the assumption that $a \geq 2$ in (2) is not necessary. The case when $1 \leq a \leq 2$ is proved by Theorem 1.5 in \cite{Hut16}. It also follows from our Theorem \ref{ham=red} on the Hamiltonian shape invariant. (It is interesting that our methods require more delicate techniques to deal with the cases when the results are already known from ECH theory.) Moreover, the recent work \cite{DNNWY20} provides a refinement of Theorem 1.5 in \cite{Hut16} in the sense that it considers the target ellipsoid $E(c,bc)$ where $b$ is a half-integer.  

\medskip

\noindent (iii) Theorem 1.2.7 in \cite{Sie20} proves a more general stablilized version of (1) in Theorem \ref{thm-3} in the case when $c=d$, i.e., the obstruction of the embedding $P(a,b) \hookrightarrow P(c,c)$. 
Finally, the result (1) in Theorem \ref{thm-3} in the case when $b/a \ge 2$ has been announced recently in \cite{Irv19} (again even for the stabilized version). We list the results here to demonstrate how quickly a comparison between the reduced shape invariants implies the obstructions (see Section \ref{sec-app}). 

\subsection{Discussion} We end this introduction with short discussions on two subjects. 
\subsubsection{Relations to classical symplectic capacities} Due to Proposition \ref{prop-ham-inv} and its following paragraph, the reduced shape invariant ${\rm Sh}^+(X)$ behaves very much like a symplectic capacity. However, ${\rm Sh}^+(X)$, taking values in subsets of the plane, may contain more information than other classical symplectic capacities. To obtain an explicit relation, consider the following value 
\[ c^1(X): = \sup\{w>0 \,| \, w_1 = w_2 =w \in {\rm Sh}^+(X)\}.\]
Points on the diagonal of ${\rm Sh}^+(X)$ correspond to the embeddings of monotone Lagrangian tori inside $X$, and at least for the ellipsoids and polydisks considered here this is in fact the Lagrangian capacity $c_L(X)$ defined in \cite{CM18}.  Example \ref{ex-hsi} in Section \ref{sec-app} shows that 
\[ c^1(B(R)) = \frac{R}{2} \,\,\,\,\,\mbox{and}\,\,\,\,\, c^1(Z(R)) = R. \]
In particular, the symplectic capacity $c^1$ is not normalized as shown in Corollary 1.3 in \cite{CM18}. In general, for any real number $\lambda \geq 2$, consider
\begin{equation} \label{lambda-capacity}
c^{\lambda}(X) = \sup\{w_2>0 \, | \, (w_1, w_2) \in {\rm Sh}^+(X) \cap \{w_2 = \lambda w_1\} \}. 
\end{equation} 
These serve as a continuous family of non-normalized symplectic capacities (with roots in the embeddings of {\it non-monotone} Lagrangian tori). In fact, the proof of Theorem \ref{thm-3} can be reformulated in terms of $c^{\lambda}$. It will be an interesting direction to explore more applications of $c^{\lambda}$. The following example shows a preliminary example. 

\begin{ex} In the case when $\frac{b}{a} \in \N_{\ge 2}$ we see that $c^1(E(a,b)) = \frac{ab}{a+b}$ while $c^2(E(a,b)) = \frac{2ab}{2a+b}$. From this it follows that if $c^2(E(a,b)) \le c^2(E(a',b'))$ and $a<a'$ (that is, the Gromov width gives no obstructions), then automatically we have  $c^1(E(a,b)) \le c^1(E(a',b'))$. In other words, the capacity $c^2$ gives strictly stronger restrictions on ellipsoid embeddings than $c^1$.

On the other hand, the full shape invariant for ellipsoids with integer ratios only implies that if $E(a,b) \hookrightarrow E(a', b')$ then $\frac{a'}{a} + \frac{b'}{b} \ge 2$ which follows from the Gromov width and volume obstructions.

\end{ex}

\subsubsection{Hamiltonian shape invariant}\label{hamshape} For domains in Euclidean space one can consider a slightly different version of the shape invariant, which relates to the study in \cite{HO19}. Explicitly, consider those embedded Lagrangian tori $L \subset X$ which can be realized as the image of a Lagrangian {\it product} torus under Hamiltonian diffeomorphisms of $\C^2$. These Lagrangian tori are called Hamiltonian tori. Recall that the Lagrangian product tori in $\C^2$, denoted by $L_H(w_1,w_2)$, are defined by 
\[ L_H(w_1,w_2) := \left\{(z_1, z_2) \in \C^2 \, \big| \, \pi|z_1|^2 = w_1\,\, \pi|z_2|^2 = w_2\right\}. \]
If $L$ is a Hamiltonian torus, that is, $L = \Phi(L_H(w_1,w_2))$ for some Hamiltonian diffeomorphism $\Phi$, then with respect to the natural basis  its area class  is $(w_1,w_2)$. For brevity, we write $L_H(w_1, w_2) \hookrightarrow X$ if there exists a Hamiltonian diffeomorphism $\Phi$ on $\C^2$ such that $\Phi(L_H(w_1,w_2)) \subset X$. For a domain $X \subset \C^2$, the following definition gives another version of the shape invariant.  
\begin{align}\label{dfn-hsi} 
{\rm Sh}_{H}(X) & : = \left\{(w_1, w_2) \in \R_{>0}^2 \, \big| \, L_H(w_1, w_2) \hookrightarrow X \right\}. 
\end{align}
This ${\rm Sh}_{H}(X)$ is called the {\it Hamiltonian shape invariant of $X$}. Observe that the requirement of the product tori in the definition of ${\rm Sh}_{H}(X)$ means it is no longer invariant under the action of the diffeomorphisms of $\mathbb T^2$. As a matter of fact, ${\rm Sh}_H(X)$ is only symmetric with respect to the line $w_1= w_2$. For simplicity then, we consider a corresponding reduced version, that is, ${\rm Sh}_{H}^+(X) := {\rm Sh}_{H}(X)|_{\{w_1 \leq w_2\}}$. 



We emphasize that when restricted to the fundamental domain $\{w_1>0, w_2 \geq 2 w_1\} \cup\{w_1>0, w_1 = w_2\}$, we have 
\begin{equation}\label{ham=red}
{\rm Sh}_H^+(X)|_{\{w_1>0, w_2 \geq 2 w_1\} \cup\{w_1>0, w_1 = w_2\}} = {\rm Sh}^+(X)|_{\{w_1>0, w_2 \geq 2 w_1\} \cup\{w_1>0, w_1 = w_2\}}.
\end{equation}
Indeed, by definition ${\rm Sh}_H^+(X) \subset {\rm Sh}^+(X)$, and the embedded Lagrangian tori constructed in Section \ref{existence} can all be realized by Hamiltonian diffeomorphisms applied to product tori. However, in general we do not have ${\rm Sh}_H(X) = {\rm Sh}(X)$. For example, the product torus $L_H(1,2)$ lies in $X=E(2,4)$ by inclusion, and so $(1,2) \in {\rm Sh}^+(X)$. By a change of basis this implies that $(3,4) \in {\rm Sh}(X)$, but $(3,4) \notin {\rm Sh}_H(X)$, since, for example, the product torus $L_H(3,4)$ has displacement energy $3$ while $E(2,4)$ has displacement energy $2$ (see Proposition 2.1 in \cite{chesch}). 

\medskip

We are able to determine the reduced Hamiltonian shape invariant of $E(a,b)$, at least if we utilize an unpublished work of K. Siegel \cite{Sieip}. 


\begin{theorem} \label{conj-1} For ellipsoid $E(a,b)$ with $\frac{b}{a} \in \N_{\geq 2}$, 
\[ {\rm Sh}_{H}^+(E(a,b)) = \left\{(w_1, w_2) \in \R_{>0}^2 \, \bigg| \, \begin{array}{c}  w_1 < \frac{a}{2}, \,\, w_2 >0 \\ \mbox{\rm or} \\ w_1 \geq \frac{a}{2},\,\,\frac{w_1}{a} + \frac{w_2}{b} <1 \end{array} \right\} \cap \{w_1 \leq w_2\}, \]
i.e., the Hamiltonian shape invariant of ellipsoid $E(a,b)$, when restricted to the region $\{w_1 \leq w_2\}$, is the union of the moment triangle of $E(a,b)$ with a strip with width $\frac{a}{2}$.  
\end{theorem}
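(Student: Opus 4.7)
The plan is to partition the half-plane $\{w_1 \leq w_2\}$ into three regions and handle each separately: (i) the fundamental domain $F := \{w_2 \geq 2w_1\} \cup \{w_1 = w_2\}$, where the Hamiltonian and reduced shapes agree; (ii) the existence portion of the intermediate region $I := \{w_1 < w_2 < 2w_1\}$ lying inside the claimed set; and (iii) the obstruction portion, i.e., the part of $I$ lying outside the claimed set.

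On $F$, I would simply combine the identity (\ref{ham=red}), which gives ${\rm Sh}_H^+(E(a,b)) \cap F = {\rm Sh}^+(E(a,b)) \cap F$, with Theorem \ref{thm-1} to read off the claimed description.

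For existence on $I$, I would observe that every $(w_1,w_2)$ in the claimed set automatically lies in the moment triangle of $E(a,b)$, so the direct inclusion $L_H(w_1,w_2) \subset E(a,b)$ supplies a Hamiltonian embedding. Indeed, if $w_1 \geq a/2$, the condition $w_1/a + w_2/b < 1$ is imposed by the claim; and if $w_1 < a/2$, then $w_2 < 2w_1 < a$, so using $b \geq 2a$ we get $w_2/b \leq w_2/(2a) < w_1/a < 1/2$, hence $w_1/a + w_2/b < 1$ automatically.

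The main obstacle is the obstruction portion of $I$: showing that whenever $w_1 \geq a/2$, $w_1/a + w_2/b \geq 1$, and $w_1 < w_2 < 2w_1$, the product torus $L_H(w_1,w_2)$ admits no Hamiltonian embedding into $E(a,b)$. The obstruction coming from Theorem \ref{thm-1} alone is insufficient here: applying the change of basis from Lemma \ref{lemma-1} (using the unique integer $k$ in $\bigl(-w_1/(w_2-w_1),\,(w_2-2w_1)/(w_2-w_1)\bigr]$) reduces $(w_1,w_2)$ to a point $(W_1,W_2) \in F$ typically with $W_1 < a/2$, which already lies in ${\rm Sh}^+(E(a,b))$. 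Thus the Hamiltonian obstruction must be strictly finer than the shape invariant obstruction. To supply this refinement I would invoke the unpublished work of K. Siegel \cite{Sieip}, which is expected to furnish obstructions tailored to product tori (presumably through higher symplectic capacities or $L_\infty$-structures on $S^1$-equivariant symplectic cohomology) that distinguish product tori from general Lagrangian tori. The remaining step is to verify that Siegel's machinery yields precisely the sharp moment-triangle bound $w_1/a + w_2/b < 1$ in the regime $w_1 \geq a/2$.
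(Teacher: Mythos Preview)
Your reduction steps (i) and (ii) are exactly how the paper proceeds: it too reduces to the rescaled case $(1,x)$ with $1<x<2$ (Theorem \ref{hamemb}), handles the fundamental domain via (\ref{ham=red}) and Theorem \ref{thm-1}, and notes that on the intermediate strip inclusion already gives existence. You also correctly diagnose why ${\rm Sh}^+$ alone cannot give the obstruction on $I$.

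The gap is in (iii). Your proposal treats Siegel's work as a black-box source of ``obstructions tailored to product tori'', but that is not what the paper does, and nothing in \cite{Sie20,Sieip} directly furnishes a product-torus embedding obstruction of the required shape. In the paper, Siegel's $\mathcal L_\infty$-homomorphism computation (Proposition \ref{non-zero-coefficient}) is used purely for an \emph{existence} result: it shows that the new moduli space $\mathcal N^s_{J,\ep,S}(\alpha_2,\dots,\alpha_2;\beta_1^d)$ of index-zero curves with $m$ positive ends all on the \emph{long} orbit $\alpha_2$ is nonempty when $m=2$; inductive gluing as in subsection \ref{indgluing} then gives all $m$ (Theorem \ref{thm-gluing2}). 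The obstruction itself comes from repeating the neck-stretching analysis of Section \ref{ssec-obs} for these new curves. The case $l_i=0$ for all $i$ forces a single component $F_{d+1}$ whose positive area gives $mb \ge (k+2)m-1$, hence $b \ge k+2$ as $m\to\infty$; the case where some $l_i<0$ requires $m_i\neq 0$, and this is where the product-torus hypothesis actually enters: a plane with $m_i=0$ would have area $1+l_i(x-1)<1$, which is ruled out by Lemma 3.7 of \cite{HO19} (minimal Maslov-$2$ disk area $\ge 1$ for $L_H(1,x)$). That lemma is precisely what fails for the non-product tori appearing in your change-of-basis example, so it, rather than Siegel's capacities, is what distinguishes the Hamiltonian case.
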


To be precise, our proof of Theorem \ref{conj-1} relies on the nonemptyness of a certain moduli space of holomorphic curves in an ellipsoid cobordism, and for this we require Siegel's work \cite{Sie20,Sieip}. Once this is established, the proof proceeds similarly to that of Theorem \ref{thm-2}. The proof is outlined in section \ref{conjsec}.


\subsection{Outline of the proof of Theorem \ref{thm-2}} \label{ssec-outline} Note that, except the case where $x \geq 2$ and $a>2$, the ``if part'' in both (1) and (2) in Theorem \ref{thm-2} trivially holds by inclusion $L_H(1,x) \subset E(a,b)$. Here, we give the outline of the proof of the obstruction part of Theorem \ref{thm-2}, i.e., the ``only if part'' in Theorem \ref{thm-2}. 

\medskip

Suppose there exists an Lagrangian embedding $L(1,x) \hookrightarrow E(a,b)$. Recall this means the Maslov class of the standard basis is $2$ and the area class evaluates as $(1,x)$. To simplify our discussion, we will always assume that $\frac{b}{a} = k + \delta \notin \Q$ where $\delta>0$ is arbitrarily small. Then there are only two primitive closed Reeb orbits on $\partial E(a,b)$, denoted by $\alpha_1$ with action $a$ and $\alpha_2$ with action $b$. Now, let $V$ be an appropriate Weinstein neighborhood of the image of $L(1,x)$ inside $E(a,b)$ which can be symplectically identified with the unit codisk bundle $U^*_g \T^2$ for a flat metric $g$. In particular, $\partial V$ is a hypersurface of contact type. For any $S$, there exists a sufficiently small $\ep>0$ such that $E(\ep, \ep S)$ is contained in $V$. In other words, we have inclusions 
\[ E(\ep, \ep S) \subset V \subset E(a,b),\]
see Figure \ref{figure-set-up}. 
\begin{figure}[h] 
\includegraphics[scale=0.78]{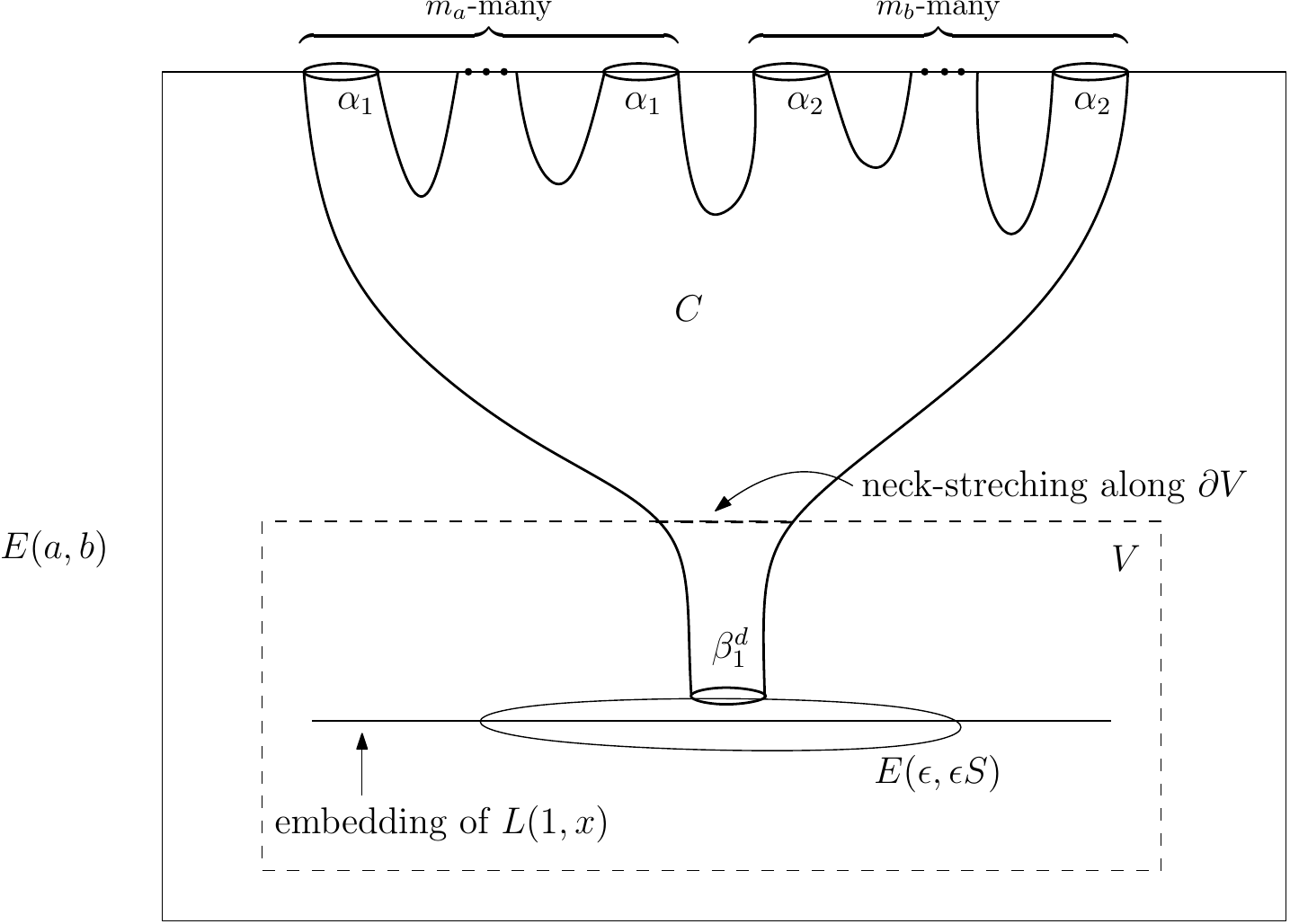}
\caption{Geometric set-up for Theorem \ref{thm-2}.}\label{figure-set-up}
\end{figure}
 In particular, we have a symplectic cobordism $\overline{X} = E(a,b) \backslash E(\ep, \ep S)$ and can choose an almost complex structure with cylindrical ends which is compatible with the contact structures of $\partial E(a,b)$ and $\partial E(\ep, \ep S)$. 
 
 Denote by $\beta_1$ the short primitive Reeb orbit of $E(\ep, \ep S)$, i.e., its period is $\ep$. Consider rigid curves $C$ (that is, its Fredholm index $=0$ and genus $=0$) in $\overline{X}$ with positive ends on $m_a$-many simply-covered $\gamma_a$ and $m_b$-many simply-covered $\gamma_b$, and a single negative end $\beta^d_1$ for some winding number $d$. In fact, we will only consider the curves with $(m_a, m_b) = (m_a, 1)$ where $m_a \to \infty$. The existence and SFT-compactness of these rigid curves are guaranteed by various results in Section \ref{sec-compact-glue}. Due to the general process of neck-stretching along $\partial V$, a sequence of rigid curves $C_n$ with respect to a sequence of almost complex structures $J_n$ which stretch along $\partial V$ converges to a limit holomorphic building $C_{\rm lim}$ by \cite{BEHWZ03}. This limit holomorphic building $C_{\rm lim}$ consists of curves in five layers (from top to bottom), 
 \begin{equation} \label{levels}
 S\partial E(a,b), \,\,\, E(a,b) \backslash V, \,\,\, S\partial V, \,\,\, V \backslash E(\ep, \ep S), \,\,\, \mbox{and}\,\,\, S\partial E(\ep, \ep S). 
 \end{equation}
To simplify the analysis, following the notation (II) in Section 2 of \cite{HO19}, there is one component denoted by $F_0$ with a single negative end $\beta_1^d$ and $T$-many positive ends on $\partial V$ denoted by $\{\gamma_i\}_{i=1}^T$. This is formed by (abstractly) gluing all the curves via their matching ends in the layers up to and including $S \partial V$. Again, following the notation (I) in Section 2 in \cite{HO19}, for each $i \in \{1, …, T\}$, the $i$-th positive end $\gamma_i$ of $F_0$ on $\partial V$ is matched with a component denoted by $F_i$ with only one negative end on $\gamma_i$. This component is formed by gluing all the curves via their matching ends that eventually connect to $\gamma_i$. For the reader's convenience, Figure \ref{figure-climit} illustrates an example of $C_{\rm lim}$. 
\begin{figure}[h] 
\includegraphics[scale=0.75]{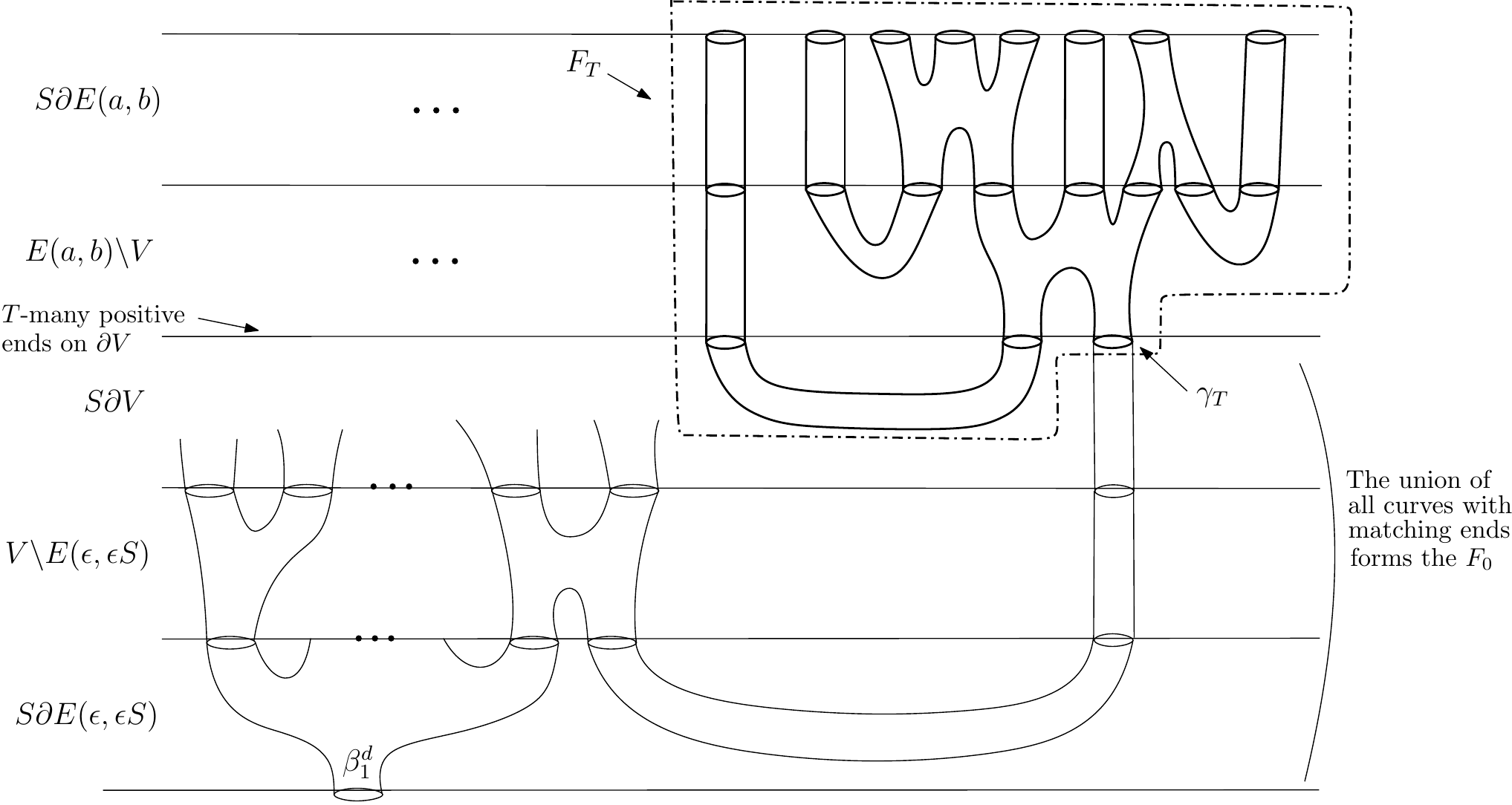}
\caption{An example of $C_{\rm lim}$}\label{figure-climit}
\end{figure}

Here, the component $F_T$ is a union of $11$ curves in layers $S\partial E(a,b), E(a,b) \backslash V$ and $S\partial V$. It ends up with one negative end $\gamma_T$, matching with $F_0$ on $\partial V$, and $8$ positive ends on the top boundary $\partial E(a,b)$. Due to a sophisticated analysis of the configurations of $C_{\rm lim}$ based on actions and Fredholm indices (see subsection \ref{ssec-obs}), the possible configurations of $\{F_i\}_{1 \leq i \leq T}$ are surprisingly restrictive. Within these limited possibilities, the consideration of actions implies the obstructions in Theorem \ref{thm-2}.

\smallskip

\section*{acknowledgements}
This work was completed when the second author holds the CRM-ISM Postdoctoral Research Fellow at CRM, University of Montreal, and the second author thanks this institute for its warm hospitality. Some conversations occurred during a visit of the second author to the University of Notre Dame in Fall 2019; he is indebted to its hospitality. Both authors thank L.~Polterovich and K.~Siegel for feedback on a draft of the paper. Moreover, both authors are grateful to the comments from M.~Hutchings that are related to Remark \ref{rmk-hutchings} and (1) in Theorem \ref{thm-3}, and to the communications from K.~Siegel that are related to the proof of Theorem \ref{conj-1} in Section \ref{conjsec}.

\section{Reduction on shape invariant} \label{sec-red}

In this section, we will give the proof of Lemma \ref{lemma-1} which reduces the consideration from ${\rm Sh}(X)$ to ${\rm Sh}^+(X)$. The latter can be pictured in a precise way. 

\begin{proof} [Proof of Lemma \ref{lemma-1}] By Theorem A in \cite{Vit90} or Theorem 2.1 in \cite{Pol91}, any Lagrangian torus in $\R^4$ admits a (primitive) class $f_1 \in H_1(L; \Z)$ with $\mu(f_1)=2$ and $\Omega(f_1)>0$. We can extend this to an integral basis with a class $f_2$, and, since $\mu(f_2)$ is automatically even, adding a multiple of $f_1$ to $f_2$ gives us a Maslov $2$ basis. If $\Omega(f_2)$ happens to be negative, then we can replace $f_2$ by $2f_1 - f_2$ and so arrive at a basis with $\Omega(f_1), \Omega(f_2)>0$. Further, without loss of generality, we assume that $0< \Omega(f_1) \le \Omega(f_2)$.

In the non-monotone case, we write $w_1 = \Omega(f_1) < \Omega(f_2)=w_2$. We compute the change in areas if we make an integral change of basis preserving the Maslov class. Such changes of basis which preserve orientation are given by matrices of the form
\[
\begin{pmatrix}
a+1 & a\\
-a & 1-a
\end{pmatrix}
\] 
for $a \in \Z$, and the new area classes are 
\[ ( (a+1)w_1 - aw_2, aw_1 + (1-a)w_2) = (w_1 - a(w_2-w_1), w_2- a(w_2-w_1) ).\]
We note that these basis changes preserve the order of areas, and orientation reversing changes would also allow the order to reverse. Note that the first term is positive exactly when $a < \frac{w_1}{w_2-w_1}$. On the other hand, if $w_2- a(w_2-w_1) \geq 2(w_1 - a(w_2-w_1))$, then 
\[ a (w_2-w_1) \geq 2 w_1 - w_2, \,\,\,\,\mbox{which implies} \,\,\,\, a \geq \frac{w_1}{w_2 - w_1} -1. \]
Observe that there exists a unique integer in the interval $[\frac{w_1}{w_2 - w_1} -1,  \frac{w_1}{w_2-w_1})$, that is, 
\begin{equation} \label{unique}
a= \ceil*{\frac{w_1}{w_2-w_1}} -1.
\end{equation}
Hence, the desired ordered basis is $e_1 = (a+1)f_1 - af_2$ and $e_2 = af_1 + (1-a) f_2$ with $a$ chosen as (\ref{unique}). 
\end{proof}

\section{ECH theory and invariants}

\subsection{ECH background} Embedded contact homology ${\rm ECH}(Y)$ is a powerful tool associating algebraic and numerical invariants to a closed 3-dimensional contact manifold $Y$. It is the homology of a chain complex that is freely generated by orbit sets, i.e., a finite collection of distinct embedded Reeb orbits of $Y$ with multiplicities, say $\alpha_1^{n_1}…\alpha_k^{n_k}$. In this paper, we will apply ECH to obtain desired embedded holomorphic curves in certain moduli spaces. This is motivated by the following fact. If there is a symplectic embedding of Liouville domains $Y_2 \hookrightarrow Y_1$, then via Seiberg-Witten theory, there exists a well-defined map (called {\rm ECH} cobordism map \cite{HT07})
 \begin{equation} \label{dfn-ech-cm}
 \Phi: {\rm ECH}(\partial Y_1) \to {\rm ECH}(\partial Y_2).
 \end{equation}
 Moreover, non-vanishing of $\Phi$, even on the chain complex level, will predict a possibly {\it broken} $J$-holomorphic current $C$ with {\rm ECH} index $=0$ (\cite{HT13}). Here, a current is a finite set $\{(C_i, m_i)\}$ where each $C_i$ is an irreducible somewhere injective $J$-holomorphic curve in either the cobordism level or a symplectization level. Usually, $\mathcal M_{\rm current}(\gamma_1, \gamma_2)$ denotes the moduli space of currents with positive end on the orbit set $\gamma_1$ and negative end on the orbit set $\gamma_2$. For the general background of ECH theory, see \cite{Hut11}, \cite{Hut14} and Section 2 in \cite{C-GH18}. Since in this paper our interest lies in ellipsoids and symplectic cobordisms between them, we will formulate necessary ingredients of ${\rm ECH}$ only in the cases of ellipsoids. 

\subsection{Numerical invariants} Suppose there exists a symplectic embedding $E(c,d) \hookrightarrow E(a,b)$ where both $\frac{d}{c}$ and $\frac{b}{a}$ are irrational. Denote by $\overline{X}$ the completion of the corresponding symplectic cobordism with respect to a compatible $J$. Following the notation in Section 2.5 in \cite{C-GH18}, denote by $\alpha_1, \alpha_2$ the two primitive Reeb orbits of $E(a,b)$ where the period of $\alpha_1$ is less than the period of $\alpha_2$; similarly, $\beta_1$ and $\beta_2$ denote the two primitive Reeb orbits of $E(c,d)$ with increasing periods. Recall that the ${\rm ECH}$ grading of an orbit set $\alpha_1^{m_1}\alpha_2^{m_2}$ is defined by 
\begin{equation} \label{dfn-ech-gr}
\frac{{\rm gr}(\alpha_1^{m_1}\alpha_2^{m_2})}{2} = (m_1+ m_2)+ m_1 m_2 +  \sum_{i=1}^{m_1} \floor*{\frac{ia}{b}} +  \sum_{i=1}^{m_2} \floor*{\frac{i b}{a}}
\end{equation}
and similarly we define ${\rm gr}(\beta_1^{n_1}\beta_2^{n_2})$. If $C \in \mathcal M_{\rm current}(\alpha_1^{m_1} \alpha_2^{m_2}, \beta_1^{n_1}\beta_2^{n_2})$, then its ${\rm ECH}$ index, denoted by $I(C)$, is equal to ${\rm gr}(\alpha_1^{m_1}\alpha_2^{m_2}) - {\rm gr}(\beta_1^{n_1}\beta_2^{n_2})$. Thus it can be computed by the following explicit formula, 
\begin{align} \label{ech-index}
\frac{I(C)}{2} & = (m_1 +m_2) - (n_1+n_2) + (m_1m_2 - n_1n_2)\\
& \,\,\,\,\, + \left(\sum_{i=1}^{m_1} \floor*{\frac{ia}{b}} +  \sum_{i=1}^{m_2} \floor*{\frac{i b}{a}}\right) - \left( \sum_{j=1}^{n_1} \floor*{\frac{jc}{d}} +  \sum_{j=1}^{n_2} \floor*{\frac{jd}{c}}\right). \nonumber
\end{align}
The existence of any desired $J$-holomorphic curve between orbit sets in $\overline{X}$ is initiated by the following result. 

\begin{prop} \label{prop-broken} Let $E(c,d) \hookrightarrow E(a,b)$ be a symplectic embedding where $\frac{d}{c}, \frac{b}{a}$ are irrational, and $\alpha_1^{m_1}\alpha_2^{m_2}$ and $\beta_1^{n_1}\beta_2^{n_2}$ be orbit sets of $\partial E(a,b)$ and $\partial E(c,d)$ respectively with the same {\rm ECH} gradings. Then there exists a possibly broken $J$-holomorphic current $C$ from $\alpha_1^{m_1}\alpha_2^{m_2}$ to $\beta_1^{n_1}\beta_2^{n_2}$ with $I(C) =0$. 
\end{prop}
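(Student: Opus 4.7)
The strategy is to extract the current from the Hutchings--Taubes construction of the ECH cobordism map. The two ingredients are that the ECH chain complex of an ellipsoid has a unique generator in each ECH grading (so grading uniquely determines orbit set), and that the cobordism map induced by an ellipsoid embedding does not annihilate any generator.

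First I would verify the uniqueness statement: when $b/a$ is irrational, the map $(m_1,m_2)\mapsto {\rm gr}(\alpha_1^{m_1}\alpha_2^{m_2})$ defined in (\ref{dfn-ech-gr}) is injective on $\Z_{\geq 0}^2$. Indeed, if two orbit sets had the same grading, then subtracting the two instances of (\ref{dfn-ech-gr}) would produce a nontrivial integer linear combination of $1$, $a/b$, and $b/a$ equal to an integer, contradicting the irrationality of $b/a$. The analogous statement holds for the orbit sets $\beta_1^{n_1}\beta_2^{n_2}$. Thus the hypothesis that $\alpha_1^{m_1}\alpha_2^{m_2}$ and $\beta_1^{n_1}\beta_2^{n_2}$ share an ECH grading identifies each as the unique generator in that grading on its respective side.

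Next I would invoke the cobordism map (\ref{dfn-ech-cm}) $\Phi\colon {\rm ECH}(\partial E(a,b))\to {\rm ECH}(\partial E(c,d))$ of Hutchings--Taubes. The map $\Phi$ preserves grading, sends the empty orbit set (the contact invariant, in grading $0$) to the empty orbit set, and intertwines the $U$-map on both sides up to chain homotopy. Since ${\rm ECH}(\partial E(a,b))\cong \Z[U]$ as a $\Z[U]$-module with the generator $\alpha_1^{m_1}\alpha_2^{m_2}$ obtained from the empty set by iterated application of $U^{-1}$ (up to sign), the intertwining forces $\Phi(\alpha_1^{m_1}\alpha_2^{m_2})$ to be nonzero in homology, hence a nonzero multiple of the unique same-grading generator $\beta_1^{n_1}\beta_2^{n_2}$. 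In particular, the chain-level matrix coefficient $\langle \Phi(\alpha_1^{m_1}\alpha_2^{m_2}),\, \beta_1^{n_1}\beta_2^{n_2}\rangle$ is nonzero for any generic admissible $J$.

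Finally I would apply the ``holomorphic curves axiom'' of the Hutchings--Taubes construction (cf.\ \cite{HT13}): whenever a matrix coefficient of $\Phi$ is nonzero, there exists a possibly broken $J$-holomorphic current in the completed cobordism $\overline{X}$ from the source orbit set to the target orbit set, and every such current contributing to the cobordism map has ECH index zero. This directly gives the desired current $C$ with $I(C)=0$. The one place a careful reader needs to look is the non-vanishing of $\Phi$ on every generator; this is the only subtle point and is the step I expect to reference precisely from \cite{HT07,HT13} (and consolidated in \cite{Hut14}). The rest of the argument is bookkeeping with the grading formula (\ref{dfn-ech-gr}).
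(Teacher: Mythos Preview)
Your overall strategy coincides with the paper's: use that $\Phi$ preserves ECH grading, that each grading on an irrational ellipsoid boundary hosts a unique generator, that $\Phi$ does not kill generators, and then invoke the holomorphic-curves axiom from \cite{HT13} to extract a (possibly broken) $I=0$ current. So the architecture is right.

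Two points deserve correction or comparison. First, your sketched proof of the uniqueness of a generator in each grading is not valid as written. The expression (\ref{dfn-ech-gr}) is built from $m_1$, $m_2$, $m_1m_2$, and the sums $\sum\lfloor ia/b\rfloor$, $\sum\lfloor ib/a\rfloor$, all of which are integers; subtracting two instances therefore yields an integer identity, not an irrational linear relation, and no contradiction with irrationality of $b/a$ arises. The standard argument instead interprets $\tfrac12{\rm gr}(\alpha_1^{m_1}\alpha_2^{m_2})$ as a lattice-point count in the triangle under the line through $(m_1,m_2)$ of slope $-a/b$, and uses irrationality to show this count is strictly monotone along the sequence of generators ordered by action. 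You can simply cite this (it is Fact~2.2 in \cite{C-GH18}, which the paper invokes) rather than attempt an ad hoc argument.

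Second, for the nonvanishing of $\Phi$ on every generator the paper takes a shorter route than your $U$--map argument: since the ellipsoid cobordism is diffeomorphic to a product, the induced ECH cobordism map is an isomorphism, and hence carries each nonzero class to a nonzero class. Your $U$--equivariance argument also works, but the product observation is cleaner and avoids the need to track chain homotopies.
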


\begin{proof} Since the cobordism of $E(c,d) \hookrightarrow E(a,b)$ is diffeomorphic to a product, the {\rm ECH} cobordism map $\Phi: {\rm ECH}(\partial E(a,b)) \to {\rm ECH}(\partial E(c,d))$ in (\ref{dfn-ech-cm}) is an isomorphism. Moreover, Fact 2.2 in \cite{C-GH18} implies that any non-empty orbit set $\alpha_1^{m_1}\alpha_2^{m_2}$ represents a nonzero class in ${\rm ECH}(\partial E(a,b))$. Therefore, $\Phi([\alpha_1^{m_1}\alpha_2^{m_2}]) \neq 0$. Meanwhile, there is a unique orbit set in any grading and $\Phi$ preserves the {\rm ECH} grading, so the orbit set in $\Phi([\alpha_1^{m_1} \alpha_2^{m_2}])$, denoted by $\beta_1^{n_1}\beta_2^{n_2}$, has the same grading as $\alpha_1^{m_1} \alpha_2^{m_2}$. Finally, since $\Phi([\alpha_1^{m_1} \alpha_2^{m_2}]) \neq 0$, there exists a possibly broken $J$-holomorphic current $C$ from $\alpha_1^{m_1}\alpha_2^{m_2}$ to $\beta_1^{n_1}\beta_2^{n_2}$. Moreover, $I(C) = 0$ by (\ref{ech-index}).
\end{proof}

\begin{remark} In general, we need extra assumptions to conclude that the $J$-holomorphic current guaranteed by Proposition \ref{prop-broken} has only a single non-trivial level (so a cobordism level) consisting of a single somewhere injective connected component. \end{remark} 

Another numerical invariant of a $J$-holomorphic current is called the $J_0$ index, denoted by $J_0(C)$. It can be computed by the following explicit formula,
\begin{equation} \label{j0-index}
\frac{J_0(C)}{2} = (m_1m_2 - n_1n_2)  + \left(\sum_{i=1}^{m_1-1} \floor*{\frac{ia}{b}} +  \sum_{i=1}^{m_2-1} \floor*{\frac{ib}{a}}\right) - \left( \sum_{j=1}^{n_1-1} \floor*{\frac{jc}{d}} +  \sum_{j=1}^{n_2-1} \floor*{\frac{jd}{c}}\right). 
\end{equation}
When $C$ is somewhere injective, connected, has genus $g(C)$, then 
\begin{equation} \label{j0-inequality}
J_0(C) \geq 2(g(C)-1 + \delta(C)) + \sum_{\gamma} (2n_{\gamma}-1) 
\end{equation}
where the sum is over all embedded Reeb orbits $\gamma$ at which $C$ has ends and $n_{\gamma}$ denotes the total number of ends of $C$ on $\gamma$. Moreover, $\delta(C)$ is an algebraic count of the number of singularities of $C$, which implies that $\delta(C) \geq 0$. In particular, $\delta(C) =0$ if and only if $C$ is embedded. 

\medskip

Now, we apply Proposition \ref{prop-broken} and $J_0$ index to a concrete example. This example will be useful later in the paper. 

\begin{ex} \label{ex-1} Consider a trivial embedding (inclusion) $\lambda E(1, k+1 +\ep') \hookrightarrow E(a, ka +\ep)$ where $\lambda >0$ and $\ep, \ep'>0$ are arbitrarily small. Following the notations as above, fix the orbit sets $\alpha_2$ and $\beta_1^{k+1}$. Proposition \ref{prop-broken} implies that there exists a possibly broken $J$-holomorphic current $C$ in the corresponding cobordism $\overline{X}$ with $I(C)=0$. Indeed, 
\[ \frac{{\rm gr}(\alpha_2)}{2} = 1 + \floor*{\frac{ak+\ep}{a}} = 1+k \]
and 
\[ \frac{{\rm gr}(\beta_1^{k+1})}{2} = k+1 + \sum_{j=1}^{k+1} \floor*{\frac{j}{k+1+\ep'}} = k+1, \]
that is, ${\rm gr}(\alpha_2) = {\rm gr}(\beta_1^{k+1})$. It turns out that, for an appropriate $\lambda$, one can show that $C$ is a single somewhere injective curve (see Lemma \ref{lemma-initial-2}). By (\ref{j0-index}), 
\[ \frac{J_0(C)}{2} = - \sum_{j=1}^{k} \floor*{\frac{j}{k+1+\ep'}} = 0. \]
Then the inequality (\ref{j0-inequality}) says that $0 \geq 2(g(C) - 1 + \delta(C)) + \sum_{\gamma} (2n_{\gamma}-1)$. As we have ends on (possibly covers of) two Reeb orbits, $\alpha_2$ and $\beta_1$, the sum $\sum_{\gamma} (2n_{\gamma}-1)$ is at least $2$, with equality only if we have a single end asymptotic to each orbit. In other words, the orbit set $\beta_1^{k+1}$ is the $(k+1)$-fold cover of $\beta_1$. Moreover, since $g(C), \delta(C) \geq 0$, we must have equality and also $g(C) = \delta(C) =0$. Hence, $C$ is an embedded cylinder. 
\end{ex}

\section{SFT and Fredholm index} 

\subsection{SFT background} Symplectic field theory (SFT) is a modern machinery that defines algebraic invariants of symplectic cobordism via certain counting of pseudo-holomorphic curves with asymptotic ends. It was originally formulated in the fundamental work \cite{EGH00}. Since then, there has been extensive development on the foundations of SFT (see \cite{BEHWZ03,Hof06,Cas14,CM18}). For a detailed introduction of SFT, see \cite{Wen16}. One of the key ingredients in SFT is the Fredholm index. Explicitly, let $u: \dot\Sigma \to X$ be a finite energy $J$-holomorphic curve, where $\dot \Sigma$ is a punctured Riemannian sphere and $X$ is a four-dimensional symplectic cobordism equipped with a compatible almost complex structure $J$. Each puncture of $u$ is asymptotic to a Reeb orbit of $\partial X = \partial X^+ \sqcup \partial X^-$. Denote by $\{\gamma_i^+\}_{i=1}^{s_+}$ the collection of positive asymptotic orbits and $\{\gamma_i^-\}_{i=1}^{s_-}$ the collection of negative asymptotic orbits. We will always work in cases where these Reeb orbits are either nondegenerate or Morse-Bott, that is, they may come in smooth families. Denote by $S_i^+$ and $S_i^-$ these families (more precisely, the leaf spaces of the associated Morse-Bott submanifolds). Fix a symplectic trivialization $\tau$ of $u^*TX$ along these Reeb orbits, and $c_1^{\tau}(u^*TX)$ denotes the first Chern number with respect to $\tau$. Then 
\begin{align} \label{dfn-f-ind}
{\rm ind}(u) & = (s_+ + s_- - 2) + 2c_1^{\tau}(u^*TX)  \\
& \,\,\,\,\,+ \left(\sum_{i=1}^{s^+} {\rm CZ}^{\tau}(\gamma_i^+) + \frac{\dim S_i^+}{2}\right) - \left(\sum_{i=1}^{s_-} {\rm CZ}^{\tau}(\gamma_i^-) - \frac{\dim S_i^+}{2}\right). \nonumber
\end{align}
where ${\rm CZ}^{\tau}$ is the Robin-Salamon index with respect to $\tau$ (see \cite{RS93})  Note that ${\rm ind}(u)$ is independent of the choice of symplectic trivialization $\tau$. 

\medskip

There will be two concrete situations we will encounter in this paper. We summarize their properties in the following examples. 

\begin{ex} \label{ex-SFT} (1) Let $S^*_g \T^2$ denote the unit cosphere bundle of a 2-torus. With respect to the canonical contact structure, $S^*_g \T^2$ is a contact manifold and its Reeb orbits correspond to the closed geodesics on the base $(\T^2, g)$ via the projection from $S^*_g \T^2$ to the base manifold $\T^2$. We distinguish them by the homology classes of their corresponding closed geodesics. For instance, following the notation in Proposition 3.1 in \cite{HO19}, $\gamma_{(k,l)}$ denotes the Reeb orbit of $S^*_g \T^2$ whose projection in $\T^2$ is in the class $(k, l)$. Note that $\gamma_{(k,l)}$ is embedded if and only if $k, l$ are coprime. An easy but important observation is that none of these Reeb orbits is contractible in $S^*_g \T^2$. Moreover, each $\gamma_{(k,l)}$ is Morse-Bott, and the leaf space has dimension $\dim S = 1$.  

Lemma 3.1 in \cite{D-RGI16} says that for a symplectic cobordism $X$ equal to either the symplectization of $S^*_g \T^2$, or the full cotangent bundle $T^* \T^2$, by choosing the complex trivialization $\tau$ of the contact planes induced by complexifying the trivialization of $\T^2$, it follows that 
\begin{equation} \label{c1-cz-torus}
c_1^{\tau}(u^*TX) = 0\,\,\,\,\,\mbox{and}\,\,\,\,\, {\rm CZ}^{\tau}(\gamma_{(k,l)}) = \frac{1}{2}
\end{equation}
for any curve $u$ and Reeb orbit $\gamma_{(k,l)}$. On the other hand, if we consider the symplectic trivialization over $S^*_g \T^2$ that comes from the inclusion into $\C^2$, then 
\begin{equation} \label{c1-cz-torus-2}
c_1^{\tau}(u^*TX) = 0 \,\,\,\,\,\mbox{and}\,\,\,\,\, {\rm CZ}^{\tau}(\gamma_{(k,l)}) = 2(k+l) + \frac{1}{2}
\end{equation}
for any curve $u$ and Reeb orbit $\gamma_{(k,l)}$.

\medskip

(2) For ellipsoid $E(a,b)$ with $\frac{b}{a} \notin \Q$, any iterates of the two primitive Reeb orbits of the contact manifold $\partial E(a,b)$, say $\gamma^k_a$ and $\gamma^k_b$, are non-degenerate, so the corresponding Morse-Bott manifolds have dimension $\dim S =0$. For a symplectic cobordism $X$ equal to the completion of $E(a,b)$, or the symplectization of $\partial E(a,b)$, choosing the trivialization $\tau$ induced from the standard $\C^2$, it follows that $c_1^{\tau}(u^*TX) =0$, and (1), (2) in \cite{HK18-2} provide the following standard formula, 
\begin{equation} \label{cz-ellipsoid}
{\rm CZ}^{\tau}(\gamma_a^k) = 2k + 2\floor*{\frac{ka}{b}} +1 \,\,\,\,\,\mbox{and}\,\,\,\,\, {\rm CZ}^{\tau}(\gamma_b^k) = 2k + 2\floor*{\frac{kb}{a}} +1
\end{equation}
for any $k \in \N$. 
\end{ex}

As we have seen in Figure \ref{figure-climit}, to simplify the discussion, sometimes in split symplectic manifolds we abstractly glue preferred curves from different levels whenever they have matching Reeb orbits. Abstract gluing simply means that we think of the distinct curves as a single holomorphic building. The Fredholm index of the resulting building can be defined using formula (\ref{dfn-f-ind}) but summing only over the unmatched ends. This index can be computed using the Fredholm indices of the sub-curves recursively as follows (see Definition 3.2 and Proposition 3.3 in \cite{HO19}). Suppose $B$ is a building made of two curves $u_1$ and $u_2$ with matching ends $(\gamma_1, …, \gamma_l)$, then 
\begin{equation} \label{f-ind-glue}
{\rm ind}(B) = {\rm ind}(u_1) + {\rm ind}(u_2) - \sum_{i=1}^l \dim S_i 
\end{equation}
where $S_i$ is the corresponding Morse-Bott submanifold of $\gamma_i$. For instance, if the matching ends $(\gamma_1, …, \gamma_l)$ lies on $E(a,b)$ with $\frac{b}{a} \notin \Q$, then (2) in Example \ref{ex-SFT} and (\ref{f-ind-glue}) imply that ${\rm ind}(B) = {\rm ind}(u_1) + {\rm ind}(u_2)$. 

\subsection{Fredholm index computations} We call a finite energy $J$-holomorphic curve $u: \dot \Sigma \to X$ {\it rigid} if ${\rm ind}(u) =0$. This term justifies itself since the geometric meaning of the Fredholm index ${\rm ind}(u)$ is the virtual dimension of a moduli space that counts $J$-holomorphic curves with appropriate constraints. Therefore, an effective counting usually comes from $u$ with ${\rm ind}(u) \geq 0$. It is a well-known fact that if $u$ is somewhere injective, then for generic almost-complex structures we have ${\rm ind}(u) \geq 0$. In general, from (\ref{dfn-f-ind}), whether ${\rm ind}(u) \geq 0$ is satisfied heavily depends on the various combinations of the asymptotics. As previewed in the outline proof of the main result (see subsection \ref{ssec-outline}), eventually we will deal with different curves in symplectic cobordisms and symplectizations. To understand this, estimates of the Fredholm indices will be helpful. The following series of results make up a detailed study of Fredholm indices in the situations that will be useful later in the paper. 

\subsubsection{Symplectization $S\partial E(1, k+\ep)$ when $\ep$ is sufficiently small}

We assume $\ep$ is small in comparison to all covering numbers of the asymptotic limits of our curves.

\begin{lemma} \label{lemma-mix} Let $u$ be a $J$-holomorphic curve in $S\partial E(1, k+\ep)$ with $k \geq 2$. 
\begin{itemize}
\item[(1)] If $u$ has $m$-many positive ends of simple $\alpha_1$ and one negative end $\alpha_1^r$ for some winding number $r \geq 1$, then 
\[ {\rm ind}(u) \geq 2r - 2 - 2\floor*{\frac{r}{k+\ep}} \geq 0.\]
Moreover, ${\rm ind}(u) =0$ if and only if $u$ is a trivial cylinder with ends $\alpha_1$. 
\item[(2)] If $u$ has $m$-many positive ends of simple $\alpha_1$ and one positive end of simple $\alpha_2$; and one negative end $\alpha_1^r$ for some winding number $r \geq 1$, then 
\[ {\rm ind}(u) \geq 2r - 2k + 2 - 2\floor*{\frac{r}{k+\ep}}. \]
Moreover, ${\rm ind}(u) \geq m+1 \geq 2$. 
\item[(3)] If $u$ has $m$-many positive ends of simple $\alpha_1$ and one negative end $\alpha_2^r$ for some winding number $r \geq 1$, then 
\[ {\rm ind}(u) \geq 2r(2k-1) + 2 - 2\floor{r(k+\ep)} \geq 2r + 2.\]
Moreover, ${\rm ind}(u) = 2r+2$ if and only if $k =2$ and $m = 2r+1$.
\item[(4)] If $u$ has $m$-many positive ends of simple $\alpha_1$ and one positive end of simple $\alpha_2$; and one negative end $\alpha_2^r$ for some winding number $r \geq 1$, then if $m \neq 0$,
\[ {\rm ind}(u) \geq (4k-2)r -2k  + 2 - 2\floor{r(k+\ep)} \geq  2r+2. \]
Moreover, we have ${\rm ind}(u)= 2r+2$ if and only if $k=2$ and $m = 2r-1$. Finally, if $m=0$ then ${\rm ind}(u) = 0$ if and only if $u$ is a trivial cylinder with ends on $\alpha_2$. 
\end{itemize}
\end{lemma}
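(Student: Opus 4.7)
The proof is a direct application of the Fredholm index formula (\ref{dfn-f-ind}), using the trivialization $\tau$ of $TS\partial E(1,k+\ep)$ inherited from the standard trivialization of $\C^2$. Under this choice $c_1^\tau(u^*TX)=0$, and the Conley--Zehnder indices are provided by (\ref{cz-ellipsoid}): with $a=1$ and $b=k+\ep$ we have $\mathrm{CZ}^\tau(\alpha_1)=3$, $\mathrm{CZ}^\tau(\alpha_2)=2k+3$, $\mathrm{CZ}^\tau(\alpha_1^r)=2r+2\lfloor r/(k+\ep)\rfloor+1$, and $\mathrm{CZ}^\tau(\alpha_2^r)=2r+2\lfloor r(k+\ep)\rfloor+1$. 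Since $\ep$ is much smaller than the covering multiplicities, $\lfloor r(k+\ep)\rfloor=rk$, and $\lfloor r/(k+\ep)\rfloor$ agrees with $\lfloor r/k\rfloor$ except when $k$ divides $r$, where it drops by one. Plugging these into (\ref{dfn-f-ind}) expresses $\mathrm{ind}(u)$ in each of the four cases as an explicit affine function of $m$, $r$, and a floor term.

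The second ingredient is the action inequality from Stokes' theorem in the symplectization: $\int_{\dot\Sigma}u^*d\lambda\geq 0$ forces the total positive action to dominate the total negative action. Combined with $m\in\Z_{\geq 0}$, this yields in case (1) the bound $m\geq r$; in case (2) the bound $m\geq\max\{0,r-k\}$; in case (3) the strict bound $m\geq rk+1$ (strictness coming from the irrationality of $r\ep$); and in case (4) the bound $m\geq (r-1)k+1$ when $r\geq 2$ and $m\geq 0$ when $r=1$. Substituting these minima into the explicit index formula produces the stated numerical lower bounds for $\mathrm{ind}(u)$. The further simplifications, such as $\mathrm{ind}(u)\geq 0$ in case (1) or $\mathrm{ind}(u)\geq 2r+2$ in cases (3) and (4), then follow from $k\geq 2$ and monotonicity of the floor term.

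The equality cases rely on a Stokes--type rigidity argument. Whenever the action inequality is saturated, $\int u^*d\lambda=0$, and since $d\lambda$ is nondegenerate on the contact distribution this forces $u$ to factor through a Reeb orbit, so $u$ is a cover of a trivial cylinder. This disposes of $\mathrm{ind}(u)=0$ in case (1) (which forces $r=m=1$) and the branch of case (4) with $m=0$. For the combinatorial equality cases in (3) and (4), namely $k=2$ with $m=2r+1$ and $k=2$ with $m=2r-1$ respectively, one reads off the conditions by tracking exactly when the action bound on $m$ and the inequality $2r(k-1)+2\geq 2r+2$ are simultaneously saturated.

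The main technical obstacle I anticipate is the secondary inequality $\mathrm{ind}(u)\geq m+1\geq 2$ in case (2), which is not a single substitution. The floor term $\lfloor r/(k+\ep)\rfloor$ interacts with the bound $m\geq r-k$ in a way that depends on the residue of $r$ modulo $k$. I would handle this by writing $r=qk+s$ with $0\leq s<k$ and separately analyzing the regimes $r\leq k$, $r>k$ with $s>0$, and $r=qk$, exploiting the one-unit drop of $\lfloor r/(k+\ep)\rfloor$ in the last regime to close the remaining small-$s$ gaps.
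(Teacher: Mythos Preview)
Your proposal is correct and follows the same two-ingredient strategy as the paper: compute $\mathrm{ind}(u)$ explicitly from (\ref{dfn-f-ind}) and (\ref{cz-ellipsoid}), then bound $m$ from below via the action inequality in the symplectization. Your identification of the minimal admissible $m$ in each case and the resulting numerical bounds agrees with the paper in cases (1), (3), and (4).

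The one genuine difference is your treatment of $\mathrm{ind}(u)\geq m+1$ in case (2). Your residue decomposition $r=qk+s$ with a three-regime analysis works, but the paper sidesteps this entirely with a cleaner estimate: from $m+k+\ep\geq r$ one reads off both $\lfloor r/(k+\ep)\rfloor\leq m/(k+\ep)+1$ and $k-r\geq -m$, and substituting these directly into $\mathrm{ind}(u)=4m+2(k-r)-2\lfloor r/(k+\ep)\rfloor+2$ gives
\[
\mathrm{ind}(u)\;\geq\;2m-\frac{2m}{k+\ep}\;=\;2m\Bigl(1-\frac{1}{k+\ep}\Bigr)\;>\;m,
\]
hence $\mathrm{ind}(u)\geq m+1$. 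This one-line bound replaces your case split; your route is more elementary but longer.

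One small point of imprecision: in the equality analysis for case (1), Stokes rigidity (saturation of the action inequality, i.e.\ $m=r$) only shows $u$ is a \emph{branched cover} of the trivial cylinder over $\alpha_1$, and such covers exist for every $m=r\geq 1$. To force $r=1$ you still need that $\mathrm{ind}(u)=0$ implies $r-1=\lfloor r/(k+\ep)\rfloor$, which fails for $r\geq 2$ since then $\lfloor r/(k+\ep)\rfloor<r/2\leq r-1$. Your parenthetical ``which forces $r=m=1$'' is correct, but the mechanism is this floor estimate (as in the paper), not the zero-energy argument.
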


\begin{proof} (1) Consider curve $u$ in $S\partial E(1, k+ \ep)$ with $m$-many positive ends of simple $\alpha_1$ and one negative end $\alpha_1^r$ for some winding number $r \geq 1$. Then, by (\ref{dfn-f-ind}), 
\begin{align*}
{\rm ind}(u) & = (m+1-2) + m \left(2 + 2 \floor*{\frac{1}{k+\ep}} + 1\right) - \left(2r + 2 \floor*{\frac{r}{k+\ep}} +1\right)\\
& = (m-1) + 3m - 2r - 2 \floor*{\frac{r}{k+\ep}} - 1 \\
& = 4m - 2r - 2 \floor*{\frac{r}{k+\ep}} - 2.
\end{align*}
Meanwhile, the action difference is 
\[ \Delta_{\mathcal A} = m - r, \,\,\,\,\mbox{which is non-negative, that is, $m \geq r$}. \]
Therefore, we obtain the desired inequality ${\rm ind}(u) \geq 2r - 2 - 2\floor{r/(k+\ep)}$. On the other hand, since $\frac{r}{k+\ep} <r$ (due to $k \geq 1$), we know that $\floor*{\frac{r}{k+\ep}} \leq r-1$. Therefore, back to the Fredholm index, 
\begin{align*}
{\rm ind}(u) & \geq 2r - 2 - 2 \floor*{\frac{r}{k+\ep}} \geq 0.
\end{align*}
If ${\rm ind}(u) =0$, then 
\begin{equation} \label{cond-r}
2r - 2 - 2 \floor*{\frac{r}{k+\ep}} =0, \,\,\,\,\mbox{which is} \,\,\,\, r - 1-  \floor*{\frac{r}{k+\ep}} = 0.
\end{equation}
If $\floor*{\frac{r}{k+\ep}} \neq 0$, then $r \geq k + \ep \geq 2 + \ep$ by our assumption. This implies that 
\begin{align*}
r - 1- \floor*{\frac{r}{k+\ep}} & \geq r - 1- \frac{r}{k+\ep} \\
& = r \cdot \frac{k+ \ep-1}{k+\ep} - 1\\
& \geq (k+ \ep)  - 2\\
&  \geq (2 + \ep) - 2 > 0
\end{align*}
which contradicts the relation (\ref{cond-r}). Therefore, $\floor*{\frac{r}{k+\ep}} =0$, which implies, by (\ref{cond-r}) again, $r =1$. Moreover, ${\rm ind}(u) = 0$ also yields that $m = r$. Hence, $m =1$. This means that $u$ is a trivial cylinder with ends on $\alpha_1$. 

\medskip

(2) Consider curve $u$ in $S\partial E(1, k+ \ep)$ with $m$-many positive ends of simple $\alpha_1$ and one positive end of simple $\alpha_2$; and one negative end $\alpha_1^r$ for some winding number $r \geq 1$. Then, by (\ref{dfn-f-ind}), 
\begin{align*}
{\rm ind}(u) & = (m+1+1-2) + m \left(2 + 2 \floor*{\frac{1}{k+\ep}} + 1\right) + (2 + 2\floor*{k+\ep} +1) \\
& \,\,\,\,\,- \left(2r + 2 \floor*{\frac{r}{k+\ep}} +1\right)\\
& = m + 3m + (2k+3) - 2r - 2 \floor*{\frac{r}{k+\ep}} -1\\
& = 4m + 2k - 2r - 2 \floor*{\frac{r}{k+\ep}} + 2. 
\end{align*}
Meanwhile, the action difference is 
\begin{equation} \label{est-mid-2}
\Delta_{\mathcal A} = m + (k+ \ep) - r, \,\,\,\,\mbox{which is non-negative, that is, $m \geq r - (k+\ep)$}.
\end{equation}
This implies the desired inequality. Indeed, 
\begin{align*}
{\rm ind}(u) & = 4m + 2k - 2r - 2 \floor*{\frac{r}{k+\ep}} + 2\\
& \geq 4r - 4(k+ \ep) + 2k - 2r - 2 \floor*{\frac{r}{k+\ep}} + 2\\
& \geq 2r - 2k + 2 - 2\floor*{\frac{r}{k+\ep}}
\end{align*}
where the final step comes from the fact that ${\rm ind}(u)$ is an integer and $\ep$ is sufficiently small. On the other hand, (\ref{est-mid-2}) also implies that 
\begin{equation}\label{cond-r-2}
\floor*{\frac{r}{k+\ep}} \leq \frac{m}{k+\ep} + 1.
\end{equation}
Moreover, $k-r > -m -\ep$ (since $k + \ep$ is irrational, we will not have equality), which is equivalent to $k- r \geq -m$. Now, back to the Fredholm index, (\ref{cond-r-2}) implies that
\begin{align*}
{\rm ind}(u) & = 4m + 2(k -r) - 2 \floor*{\frac{r}{k+\ep}} + 2 \\
& \geq 4m + (-2m) - 2 \left(\frac{m}{k+\ep} + 1\right) + 2\\
& = 2m - \frac{2m}{k+\ep} > 2m - \frac{2m}{2}  = m
\end{align*}
where the second last step uses the hypothesis that $k \geq 2$. Hence, ${\rm ind}(u) \geq m+1$. Note that when $m =0$, we have ${\rm ind}(u) \geq 2$ since ${\rm ind}(u)$ is always even. In particular, ${\rm ind}(u)$ is never equal to $0$ in this case. 

\medskip

(3) Consider curve $u$ in $S\partial E(1, k+ \ep)$ with $m$-many positive ends of simple $\alpha_1$ and one negative end $\alpha_2^r$ for some winding number $r \geq 1$. Then, by (\ref{dfn-f-ind}), 
\begin{align*}
{\rm ind}(u)& = (m+1-2) + m \left(2 + 2 \floor*{\frac{1}{k+\ep}}+1\right) - (2r + 2\floor*{r(k+\ep)} +1) \\
& = m-1+3m - 2r -2\floor*{r(k+\ep)} -1\\
& = 4m -2r - 2\floor*{r(k+\ep)} -2.
\end{align*}
Meanwhile, the action difference is 
\[ \Delta_{\mathcal A} = m - r(k+\ep) \,\,\,\,\mbox{which is non-negative, that is, $m > rk + r\ep$}\]
where we will not have equality since $k+\ep$ is irrational. For sufficiently small $\ep$, this condition is equivalent to $m \geq rk+1$. Then it implies the desired inequality. Indeed, 
\begin{align*}
{\rm ind}(u)& = 4m -2r - 2\floor*{r(k+\ep)} -2 \\
& \geq 4rk + 4 - 2r - 2\floor*{r(k+\ep)} -2 \\
& \geq  2r(2k-1) + 2 - 2\floor{r(k+\ep)}.
\end{align*}
Now, when $\ep$ is sufficiently small, $\floor*{r(k+\ep)} = rk$. Now, back to the Fredholm index, 
\begin{align*}
{\rm ind}(u) & \geq 2r(2k-1) + 2 - 2\floor{r(k+\ep)} \\ 
& = 4rk - 2r + 2 - 2rk\\
& \geq 2rk - 2r + 2 \geq 2r + 2
\end{align*}
where the last step comes from the hypothesis that $k \geq 2$. Note that ${\rm ind}(u) = 2r+2$ if and only if $k =2$ and $m = 2r+1$. 

\medskip

(4) Consider curve $u$ in $S\partial E(1, k+ \ep)$ with $m$-many positive ends of simple $\alpha_1$ and one positive end of simple $\alpha_2$; and one negative end $\alpha_2^r$ for some winding number $r \geq 1$. Then, by (\ref{dfn-f-ind}), 
\begin{align*}
{\rm ind}(u) & = (m+1+1-2) + m \left(2 + 2 \floor*{\frac{1}{k+\ep}} + 1\right) + (2 + 2\floor*{k+\ep} +1) \\
& \,\,\,\,\,- \left(2r + 2 \floor*{r(k+\ep)} +1\right)\\
& = m + 3m + (3  +2\floor*{k+\ep}) - (2r + 2\floor*{r(k+\ep)} +1) \\
& = 4m + (2k+3) - (2r + 2\floor*{r(k+\ep)}  +1) \\
& = 4m + 2k - 2r - 2\floor*{r(k+\ep)} + 2
\end{align*}
where in the third step we use the assumption that $\ep$ is sufficiently small. Meanwhile, the action difference is 
\begin{equation} \label{est-mid-4}
\Delta_{\mathcal A} = m + (k+\ep) -  r(k+\ep) \,\,\,\,\mbox{which is non-negative, so $m \geq (r-1)(k + \ep)$}
\end{equation}
where we have equality if and only if $m = 0$ and $r=1$ (since $k+\ep$ is irrational). Then let us carry on the discussion in two cases. 
\begin{itemize}
\item[(i)] If $m \neq 0$, then (\ref{est-mid-4}) yields the desired inequality. Indeed,
\begin{align*}
{\rm ind}(u) & = 4m + 2k - 2r - 2\floor*{r(k+\ep)} + 2\\
& \geq (4k+4\ep-2)r - 4(k+\ep) + 2k +2 - 2\floor*{r(k+\ep)} \\
& \geq (4k-2) r - 2k + 2 - 2\floor*{r(k+\ep)}.
\end{align*}
On the other hand, for sufficiently small $\ep$ we have $\floor*{r(k+\ep)} = rk$. Now, back to the Fredholm index, 
\begin{align*}
{\rm ind}(u)  & \geq  (4k-2) r - 2k + 2 - 2\floor*{r(k+\ep)} \\
 & = (4k-2)r - 2k + 2 - 2rk \\
 & = (2k-2)(r-1) + 4 \geq 2r + 2
\end{align*}
where the final step uses the hypothesis that $k \geq 2$. Note that ${\rm ind}(u)= 2r+2$ if and only if $k=2$ and $m = 2r-1$. 
\item[(ii)] If $m =0$ (then $r=1$), then the curve $u$ is a trivial cylinder with ends on $\alpha_2$ and ${\rm ind}(u) =0$, vice versa. 
\end{itemize}
Therefore, we complete the proof. \end{proof}

\subsubsection{Symplectic cobordism $\overline{X}$ from inclusion $E(\ep, \ep S) \hookrightarrow E(1,k+\ep)$} 

\begin{lemma} \label{lemma-2} Let $u$ be a $J$-holomorphic curve in the symplectic cobordism part $\overline{X}$ with positive ends $(\alpha_1^{r_1}, …, \alpha_1^{r_{n_1}}, \alpha_2^{s_1}, …, \alpha_2^{s_{n_2}})$ and negative ends $(\beta_1^{t_1}, …, \beta_1^{t_{n_3}}, \beta_2^{u_1}, …, \beta_2^{u_{n_4}})$. Then its Fredholm index is 
\begin{align*}
{\rm ind}(u) & = 2n_1 + 2n_2 - 2 + 2 \sum_{i=1}^{n_1} \left(r_i + \floor*{\frac{r_i}{k+\ep}}\right) + 2 \sum_{i=1}^{n_2} \left(s_i + \floor*{s_i(k+\ep)}\right)\\
& - 2 \sum_{i=1}^{n_3} \left(t_i + \floor*{\frac{t_i}{S}} \right) - 2 \sum_{i=1}^{n_4} \left(u_i + \floor*{u_i S}\right).
\end{align*}
\end{lemma}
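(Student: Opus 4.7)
The proof is a direct application of the general SFT Fredholm index formula (\ref{dfn-f-ind}) from Example \ref{ex-SFT}(2), and the main task is careful bookkeeping rather than any substantive geometric argument.

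The plan is as follows. First, I would set up the trivialization: since $\overline{X}$ sits inside $\C^2$, I use the symplectic trivialization $\tau$ of the contact planes induced from the standard complex structure on $\C^2$. As recalled in Example \ref{ex-SFT}(2), with respect to this $\tau$ one has $c_1^\tau(u^*T\overline{X}) = 0$, so the middle term of (\ref{dfn-f-ind}) drops out. Next, since $k+\ep$ is irrational and (for generic $S$) the ratio $\ep S / \ep = S$ is irrational, every iterate of each of the four primitive Reeb orbits $\alpha_1, \alpha_2$ on $\partial E(1, k+\ep)$ and $\beta_1, \beta_2$ on $\partial E(\ep, \ep S)$ is non-degenerate, so each associated Morse-Bott leaf space has $\dim S_i = 0$ and the correction terms $\frac{1}{2}\dim S_i$ in (\ref{dfn-f-ind}) vanish.

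With these simplifications in hand, I would substitute the standard Conley-Zehnder formula (\ref{cz-ellipsoid}) applied to each of the four orbit types. Concretely, for the ellipsoid $E(1, k+\ep)$ with short orbit $\alpha_1$ of period $1$ and long orbit $\alpha_2$ of period $k+\ep$, one reads off
\[
{\rm CZ}^\tau(\alpha_1^{r}) = 2r + 2\floor*{\tfrac{r}{k+\ep}} + 1, \qquad {\rm CZ}^\tau(\alpha_2^{s}) = 2s + 2\floor*{s(k+\ep)} + 1,
\]
and analogously for $E(\ep,\ep S)$ with short orbit $\beta_1$ of period $\ep$ and long orbit $\beta_2$ of period $\ep S$,
\[
{\rm CZ}^\tau(\beta_1^{t}) = 2t + 2\floor*{\tfrac{t}{S}} + 1, \qquad {\rm CZ}^\tau(\beta_2^{u}) = 2u + 2\floor*{uS} + 1.
\]

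Finally I would collect the contributions in (\ref{dfn-f-ind}). The Euler-characteristic term contributes $s_+ + s_- - 2 = n_1 + n_2 + n_3 + n_4 - 2$. Each positive end contributes its ${\rm CZ}^\tau$ and each negative end contributes minus its ${\rm CZ}^\tau$, so the constant $+1$ summands combine to give $(n_1+n_2) - (n_3+n_4)$, and when this is added to $n_1+n_2+n_3+n_4-2$ the $n_3, n_4$ terms cancel, leaving exactly the $2n_1 + 2n_2 - 2$ prefactor in the stated formula. The remaining $2r_i + 2\floor*{r_i/(k+\ep)}$ etc. terms assemble, with the correct signs according to whether the end is positive or negative, into the four displayed sums. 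There is essentially no obstacle beyond verifying the arithmetic; the only point requiring care is the cancellation of the $+1$ summands in the Conley-Zehnder indices against the $s_\pm$ contribution of the Euler-characteristic term, which is what produces the otherwise mysterious $2n_1 + 2n_2$ (and not $2(n_1+n_2+n_3+n_4)$) in front.
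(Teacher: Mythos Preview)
Your proposal is correct and follows exactly the approach the paper takes: the paper's proof simply reads ``This directly comes from (\ref{dfn-f-ind}) and (\ref{cz-ellipsoid}),'' and your write-up spells out precisely that substitution and the cancellation of the $+1$'s against the Euler-characteristic term.
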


\begin{proof} This directly comes from (\ref{dfn-f-ind}) and (\ref{cz-ellipsoid}). \end{proof}

Suppose $u$ is an $m$-fold cover of a somewhere injective curve $\tilde{u}$ (with asymptotics now denoted $\tilde{\cdot}$). Then Lemma \ref{lemma-2} says that 
\begin{align*}
{\rm ind}(\tilde{u}) & = 2\tilde{n}_1 + 2\tilde{n}_2 - 2 + 2 \sum_{i=1}^{\tilde{n}_1} \left(\tilde{r}_i + \floor*{\frac{\tilde{r}_i}{k+\ep}}\right) + 2 \sum_{i=1}^{\tilde{n}_2} \left(\tilde{s}_i + \floor*{\tilde{s}_i(k+\ep)}\right)\\
& - 2 \sum_{i=1}^{\tilde{n}_3} \left(\tilde{t}_i + \floor*{\frac{\tilde{t}_i}{S}} \right) - 2 \sum_{i=1}^{\tilde{n}_4} \left(\tilde{u}_i + \floor*{\tilde{u}_i S}\right).
\end{align*}
Moreover, $\sum r_i = m \sum \tilde{r}_i$ and similarly for the other ends. Then we have the following corollary. 

\begin{cor} \label{cor-1} If $u$ is an $m$-fold cover of a somewhere injective curve $\tilde{u}$ for a generic almost complex structure, then 
\begin{align*}
{\rm ind}(u) & \geq (2m-2) + 2(n_1 - m \tilde{n}_1) + 2(n_2 - m \tilde{n}_2) \\ 
& + 2 \left(\sum_{i=1}^{n_1}  \floor*{\frac{r_i}{k+\ep}} - m \sum_{i=1}^{\tilde{n}_1} \floor*{\frac{\tilde{r}_i}{k+\ep}} \right) + 2 \left(\sum_{i=1}^{n_2}  \floor*{s_i(k+\ep)} - m \sum_{i=1}^{\tilde{n}_2} \floor*{s_i(k+\ep)}\right)\\
& - 2 \left(\sum_{i=1}^{n_3}  \floor*{\frac{t_i}{S}} - m \sum_{i=1}^{\tilde{n}_3} \floor*{\frac{\tilde{t}_i}{S}} \right) - 2 \left(\sum_{i=1}^{n_4}  \floor*{u_i S} - m \sum_{i=1}^{\tilde{n}_4} \floor*{\tilde{u}_i S}\right)
\end{align*}
\end{cor}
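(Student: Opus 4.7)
The plan is to derive Corollary \ref{cor-1} as a direct consequence of Lemma \ref{lemma-2} combined with the standard fact that, for a generic almost complex structure, any somewhere injective $J$-holomorphic curve $\tilde u$ satisfies ${\rm ind}(\tilde u) \geq 0$. The strategy is to compute ${\rm ind}(u) - m \cdot {\rm ind}(\tilde u)$ explicitly and show that the answer equals exactly the right-hand side of the claimed inequality; then the desired bound follows by moving $m \cdot {\rm ind}(\tilde u) \geq 0$ to the other side.

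First, I would apply Lemma \ref{lemma-2} to both $u$ (with positive ends $\alpha_1^{r_i}, \alpha_2^{s_i}$ and negative ends $\beta_1^{t_i}, \beta_2^{u_i}$) and its underlying somewhere injective curve $\tilde u$ (with the analogous tilded data). Since $u$ is an $m$-fold cover of $\tilde u$, the total multiplicities on each primitive Reeb orbit are preserved up to the factor $m$; that is,
\[
\sum_{i=1}^{n_1} r_i = m \sum_{i=1}^{\tilde n_1} \tilde r_i, \quad \sum_{i=1}^{n_2} s_i = m \sum_{i=1}^{\tilde n_2} \tilde s_i, \quad \sum_{i=1}^{n_3} t_i = m \sum_{i=1}^{\tilde n_3} \tilde t_i, \quad \sum_{i=1}^{n_4} u_i = m \sum_{i=1}^{\tilde n_4} \tilde u_i.
\]
This is the key combinatorial input: it makes the four ``linear'' sums $2\sum r_i$, $2\sum s_i$, $-2\sum t_i$, $-2\sum u_i$ appearing in ${\rm ind}(u)$ cancel exactly against $m$ times the corresponding tilded sums in $m \cdot {\rm ind}(\tilde u)$.

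Next I would carry out the subtraction ${\rm ind}(u) - m \cdot {\rm ind}(\tilde u)$ term by term using the formula in Lemma \ref{lemma-2}. The $(2n_j - 2) - m(2\tilde n_j - 2)$ contributions produce $2(n_1 - m \tilde n_1) + 2(n_2 - m \tilde n_2) + (2m - 2)$, the linear sums cancel as noted above, and the floor terms contribute precisely the four floor-function differences in the statement of Corollary \ref{cor-1}. Thus the identity
\[
{\rm ind}(u) = m \cdot {\rm ind}(\tilde u) + (2m - 2) + 2(n_1 - m\tilde n_1) + 2(n_2 - m\tilde n_2) + 2[\text{floor differences}]
\]
holds on the nose, and the inequality in Corollary \ref{cor-1} follows immediately from ${\rm ind}(\tilde u) \geq 0$.

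There is essentially no obstacle beyond careful bookkeeping: the proof is a one-line manipulation after writing out the two index formulas. The only conceptual point is to invoke the genericity of $J$ to ensure that the underlying somewhere injective curve $\tilde u$ has nonnegative Fredholm index, which is the content of the transversality theorem for somewhere injective curves in the SFT setting (as already cited in the paragraph preceding the corollary).
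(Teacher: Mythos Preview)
Your proposal is correct and follows exactly the same approach as the paper: the paper's proof consists of the single sentence ``This is from the fact that ${\rm ind}(\tilde{u}) \geq 0$ since $\tilde{u}$ is a somewhere injective curve,'' and your argument is precisely the bookkeeping that unpacks this line using Lemma \ref{lemma-2} and the relations $\sum r_i = m\sum \tilde r_i$, etc.
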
 

\begin{proof} This is from the fact that ${\rm ind}(\tilde{u}) \geq 0$ since $\tilde{u}$ is a somewhere injective curve. \end{proof}

\subsubsection{Symplectization $S\partial E(1, S)$}

\begin{lemma} \label{lemma-se-bottom} Let $u$ be a $J$-holomorphic curve in the symplectization $S\partial E(1, S)$ with positive ends $(\beta_1^{r_1}, …, \beta_1^{r_{n_1}}, \beta_2^{s_1}, …, \beta_2^{s_{n_2}})$ and a negative end $\beta_1^{t}$ with $S >t$. Then ${\rm ind}(u) \geq 0$, and ${\rm ind}(u)=0$ if and only if $u$ has a single positive end $\beta_1^t$. \end{lemma}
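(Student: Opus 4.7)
The plan is to substitute directly into the Fredholm index formula (\ref{dfn-f-ind}) using the Conley-Zehnder formulas (\ref{cz-ellipsoid}) for $E(1,S)$ (with $S$ irrational and taking the role of $b/a$). With $c_1^\tau = 0$ in the symplectization, this gives
\begin{align*}
{\rm ind}(u) &= (n_1+n_2+1-2) + \sum_{i=1}^{n_1}\left(2r_i + 2\floor*{\tfrac{r_i}{S}}+1\right) + \sum_{i=1}^{n_2}\left(2s_i + 2\floor*{s_i S}+1\right) \\
&\quad - \left(2t + 2\floor*{\tfrac{t}{S}}+1\right).
\end{align*}
Since $t<S$ by hypothesis we have $\floor*{t/S}=0$, and so the expression simplifies to
\[
{\rm ind}(u) = 2(n_1+n_2-1) + 2\sum_{i=1}^{n_1}\left(r_i+\floor*{\tfrac{r_i}{S}}\right) + 2\sum_{i=1}^{n_2}\left(s_i+\floor*{s_i S}\right) - 2t.
\]

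Next I would bring in the action constraint. Non-negativity of the symplectization action gives $\sum_i r_i + S\sum_j s_j \geq t$; write $E := \sum r_i + S\sum s_j - t \geq 0$. Using the identity $\floor*{s_j S} = s_j S - \{s_j S\}$ (where $\{\cdot\}$ is the fractional part), I plan to eliminate $t$ from the expression above, obtaining
\[
{\rm ind}(u) = 2(n_1+n_2-1) + 2E + 2\sum_{i=1}^{n_1}\floor*{\tfrac{r_i}{S}} + 2\sum_{j=1}^{n_2}\bigl(s_j-\{s_j S\}\bigr).
\]
Each summand on the right is manifestly non-negative: $E\geq 0$ by action, the floor terms are non-negative, and since $s_j\geq 1$ while $\{s_j S\}<1$ we have $s_j - \{s_j S\} \geq 0$ (in fact strictly positive, using that $S$ is irrational so $s_j S \notin \Z$). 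Moreover $n_1+n_2 \geq 1$, because a holomorphic curve in a symplectization with a single negative end of positive action must have at least one positive end. This immediately yields ${\rm ind}(u) \geq 0$.

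For the equality case, ${\rm ind}(u)=0$ forces every non-negative contribution to vanish. In particular $n_1+n_2 = 1$. The subcase $n_2=1$ is ruled out since then $s_1 - \{s_1 S\} > 0$ makes the last sum strictly positive. So $n_1=1$, $n_2=0$, and we also need $E=0$ and $\floor*{r_1/S}=0$. The first gives $r_1 = t$, and since $t<S$ the second is automatic. Hence $u$ has the single positive end $\beta_1^t$, as claimed.

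The proof is essentially bookkeeping; the only conceptual step is the rewriting via the action identity that exhibits the index as a manifestly non-negative sum. I do not foresee a genuine obstacle beyond correctly handling the $S$-irrationality to ensure $\{s_j S\}<1$ strictly and recording that the hypothesis $S>t$ is precisely what kills the $\floor*{t/S}$ term.
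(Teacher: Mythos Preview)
Your proof is correct and follows essentially the same approach as the paper: both substitute into the Fredholm index formula, use the action constraint $\sum r_i + S\sum s_j \geq t$, and observe that each remaining term is nonnegative. Your rewriting via $\floor{s_jS} = s_jS - \{s_jS\}$ to display the index as a manifestly nonnegative sum is equivalent to the paper's use of $s_i + \floor{s_iS} \geq s_iS$, and your explicit remark that $\floor{t/S}=0$ is a point the paper leaves implicit.
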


\begin{proof} For the curve $u$ in the hypothesis, by (\ref{dfn-f-ind}), 
\begin{align} 
\nonumber {\rm ind}(u) & = (n_1+n_2 +1 - 2) + \sum_{i=1}^{n_1} \left(2r_i + 2 \floor*{\frac{r_i}{S}} + 1\right) + \sum_{i=1}^{n_2} \left(2s_i + 2\floor*{s_i S} + 1\right) \\
\nonumber &\,\,\,\,\,\,\,\,- \left(2t + 2\floor*{\frac{t}{S}} +1\right) \\
\label{Wform} &  = 2n_1 + 2n_2 -2 + 2 \sum_{i=1}^{n_1} \left(r_i + \floor*{\frac{r_i}{S}}\right) + 2 \sum_{i=1}^{n_2} \left(s_i + \floor*{s_i S} \right) - 2t. 
\end{align}
Meanwhile, the action difference
\begin{equation*}
\Delta_{\mathcal A} = \sum_{i=1}^{n_1} r_i + \sum_{i=1}^{n_2} s_i S - t  \geq 0.
\end{equation*} 
Note that we have $\Delta_{\mathcal A} =0$ only when $n_2 =0$ since $S$ is assumed to be irrational. Observe that since at least one of $n_1, n_2$ is positive, we have $2n_1 + 2n_2 - 2 \geq 0$. Also, for each $i$, the term $s_i + \floor*{s_i S} \geq s_i S$ and $s_i + \floor*{s_i S} = s_i S$ if and only if $s_i = 0$. Now, back to the Fredholm index of $u$ as above, 
\begin{align*}
{\rm ind}(u) & = (2n_1 + 2n_2 -2) + 2 \sum_{i=1}^{n_1} \left(r_i + \floor*{\frac{r_i}{S}}\right) + 2 \sum_{i=1}^{n_2} \left(s_i + \floor*{s_i S} \right) - 2t\\
& \geq 0 + 2 \left( \sum_{i=1}^{n_1} \floor*{\frac{r_i}{S}} + \sum_{i=1}^{n_1} r_i + \sum_{i=1}^{n_2} \left(s_i + \floor*{s_i S}\right) - t\right) \\
& \geq 0 + 2 \left( \sum_{i=1}^{n_1} r_i + \sum_{i=1}^{n_2} s_i S - t \right) \geq 0.
\end{align*} 
Moreover, ${\rm ind}(u) =0$ if and only if $n_2 = 0$, $n_1 = 1$ and $r_1 = t$. The conclusion $n_2 =0$ comes from $\Delta_{\mathcal A} =0$. The conclusion $n_1 =1$ comes from $2n_1 + 2n_2 -2 =0$. The conclusion $r_1 = t$ comes from 
\[ r_1 + \floor*{\frac{r_1}{S}} -t =0, \,\,\,\,\mbox{which implies that} \,\,\,\, r_1 < S \]
since $S > t$, and then $r_1 = t$. \end{proof}

\section{Compactness and gluing} \label{sec-compact-glue}

This section will produce a family of rigid pseudo-holomorphic curves that will be used later in Section \ref{sec-proof} for the obstructions of Theorem \ref{thm-2}. 

\subsection{Compactness}
We will start with a compactness result. To this end, let us recall some notations. Let $E(a,b) = E(a, ka + \ep)$ and $E(\ep, \ep S)$ be ellipsoids with $k\geq 2$, $S \notin \Q$, and $\ep$ arbitrarily small such that $ka+\ep \notin \Q$ and $E(\ep, \ep S) \subset E(a,b)$. Let $\overline{X}$ be the completion of the symplectic cobordism from $E(a,b)$ to $E(\ep, \ep S)$. Moreover, denote by $\alpha_1$ and $\alpha_2$ the primitive Reeb orbits of $E(a, ka+\ep)$ with action $a$ and $ka + \ep$ respectively; denote by $\beta_1$ and $\beta_2$ the primitive Reeb orbits of $E(\ep, \ep S)$ with action $\ep$ and $\ep S$ respectively. For a generic almost complex structure $J$ of $\overline{X}$ with cylinder ends, consider the following moduli spaces, where all curves really denote equivalence classes under reparameterizations of the domain:
\begin{equation} \label{dfn-moduli-2}
\mathcal M^s_{J, \ep, S}(\underbrace{\alpha_1, …, \alpha_1}_{{\tiny \mbox{$m$-many}}}, \alpha_2; \beta_1^t) := {\small \left\{\begin{array}{l} \mbox{simple $J$-holomorphic curve $u$ with positive} \\ \mbox{ends $(\alpha_1, …, \alpha_1, \alpha_2)$ and negative end $\beta_1^t$}  \end{array} \right\}}.
\end{equation}
Here, $m \in \N$ and $S>t$, where $t$ is chosen so that the curves are rigid, that is, $t=2m+k+1$. Now, for $1$-parameter families $\{J_{\tau} \}$ consider the corresponding moduli spaces as follows, which have virtual dimension $1$. 
\begin{equation} \label{dfn-moduli-I}
\mathcal M_{m} : = \left\{(u,\, \tau) \,\bigg| \, u \in \mathcal M^s_{J_{\tau}, \ep, S}(\underbrace{\alpha_1, …, \alpha_1}_{{\tiny \mbox{$m$-many}}}, \alpha_2; \beta_1^t), \,\,\tau \in [0,1]\right\}. 
\end{equation}
For simplicity, we will use $\ep$ to uniformly denote an arbitrarily small number. 

\begin{theorem} \label{thm-compact} For generic $\{J_{\tau} \}$, with $\tau$ in a compact interval, the moduli space $\mathcal M_{m}$ is sequentially compact. \end{theorem}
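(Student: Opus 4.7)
The plan is to apply the SFT compactness theorem of \cite{BEHWZ03} to a sequence $(u_n,\tau_n)\in\mathcal M_m$ and then use the Fredholm-index bounds in Lemmas \ref{lemma-mix}, \ref{lemma-2} and \ref{lemma-se-bottom}, together with Corollary \ref{cor-1}, to show that the resulting limit holomorphic building is in fact a single somewhere injective curve of the type prescribed by $\mathcal M_m$. After passing to a subsequence, $\tau_n \to \tau_\infty\in [0,1]$ and the $u_n$ converge to a stable holomorphic building $\mathcal B$ for $J_{\tau_\infty}$ whose combined positive asymptotics are $m$ copies of simple $\alpha_1$ and one simple $\alpha_2$, whose single combined negative asymptotic is $\beta_1^t$, and which has genus zero. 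Since all Reeb orbits on $\partial E(a,ka+\ep)$ and $\partial E(\ep,\ep S)$ are nondegenerate, formula (\ref{f-ind-glue}) reduces to plain additivity, so $\mathrm{ind}(\mathcal B)=0$ as the sum of the Fredholm indices of its components.

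The core of the argument is to show that every symplectization piece of $\mathcal B$ is a trivial cylinder. At the top-most level, the positive ends are by construction simple, and Lemma \ref{lemma-mix} gives the bounds $2$, $m+1$, $2r+2$, $2r+2$ in cases (1)--(4), so any non-cylindrical component contributes strictly positive index. Deeper top-level components, whose positive ends may be iterated, are controlled by the genericity of $\{J_\tau\}$, which forces somewhere injective non-trivial pieces in the symplectization to have index at least $1$ modulo the $\R$-action, and by Corollary \ref{cor-1} (and its symplectization analogue) which bounds the indices of multiple covers from below. Symmetrically, Lemma \ref{lemma-se-bottom} pins the component attached to the unmatched end $\beta_1^t$ as the trivial cylinder on $\beta_1^t$, and action positivity in the exact symplectization excludes additional components escaping downward. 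Since all contributions are non-negative and sum to zero, no symplectization piece can be non-trivial.

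After stripping trivial cylinders, $\mathcal B$ reduces to cobordism-level curves with combined asymptotics $(\alpha_1, \ldots, \alpha_1, \alpha_2)$ on top and $\beta_1^t$ on the bottom, and combined index zero. A splitting into several connected cobordism pieces would force one of them to have no negative end (since only one $\beta_1^t$ is available), and evaluating Lemma \ref{lemma-2} on such a cap-like curve produces an index strictly larger than zero, leaving the remaining pieces with a negative-index deficit that cannot be realized even by multiple covers, as controlled by Corollary \ref{cor-1}. Hence a single cobordism curve $u_\infty$ remains; and if $u_\infty$ were an $m'$-fold cover of a somewhere injective $\tilde u$, the simplicity of each $\alpha_1$- and $\alpha_2$-end of $\mathcal B$ would force $m'=1$. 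Therefore $(u_\infty, \tau_\infty)\in\mathcal M_m$, which gives the required sequential compactness.

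The main obstacle I expect is the case analysis inside the symplectization levels when intermediate components carry iterated positive ends $\alpha_1^r$ or $\alpha_2^r$ that are not directly covered by the hypotheses of Lemma \ref{lemma-mix}. The strategy is to propagate index positivity inductively from the outer ends of the building toward its interior, using the sharpness of the inequalities in Lemma \ref{lemma-mix} (where the hypothesis $k\geq 2$ is essential) together with Corollary \ref{cor-1} to control all multiple covers. Verifying that no configuration of iterated matching orbits survives this bookkeeping is the technically delicate part of the argument.
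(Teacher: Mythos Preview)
Your overall architecture is correct---SFT compactness followed by index additivity and a non-negativity argument---but two genuine technical steps are missing, and without them the bookkeeping does not close.

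First, you invoke Lemma~\ref{lemma-se-bottom} to force the bottom component to be the trivial cylinder over $\beta_1^t$, but that lemma presupposes the positive ends all lie on covers of $\beta_1$. A priori the cobordism pieces may have negative ends on covers of $\beta_2$, which then appear as positive ends in the bottom symplectization. The paper excludes this possibility separately (Lemma~\ref{noends}), by bounding the positive contributions to the index of the underlying somewhere injective curve $\tilde u$ against the total ``budget'' coming from the unmatched ends of the original curve; this uses $t<S$ in an essential way. You have no analogue of this step. Relatedly, even after restricting to $\beta_1$-ends, you still need to rule out covering numbers exceeding $S$ on the positive ends of $C_b$; the paper does this via a short Claim inside the proof.

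Second, your control of multiple covers in the cobordism level is too coarse. Corollary~\ref{cor-1} gives a raw lower bound, but the terms of the form
\[
\sum_{i}\Big\lfloor\tfrac{r_i}{k+\ep}\Big\rfloor - m\sum_{i}\Big\lfloor\tfrac{\tilde r_i}{k+\ep}\Big\rfloor
\quad\text{and}\quad
\sum_{i}\lfloor s_i(k+\ep)\rfloor - m\sum_{i}\lfloor \tilde s_i(k+\ep)\rfloor
\]
can be negative, and cancelling them against the remaining positive terms is exactly the content of Lemma~\ref{lemma-hol-build}, which relies on the elementary floor/ceiling inequalities of Lemma~\ref{lemma-floor-ceil}. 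The paper also organizes the building differently from you: rather than arguing level-by-level, it groups each cobordism curve $u^p$ together with the bottom-symplectization pieces $W^q$ that have no unmatched negative end, and proves an index bound for this grouped component $C$. This grouping is what makes the cancellation go through; your level-by-level scheme, together with a vague appeal to ``propagate index positivity inductively,'' does not supply a substitute. The obstacle you flag in your final paragraph is real, and the paper resolves it precisely through Lemmas~\ref{noends} and~\ref{lemma-hol-build}.
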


The proof of Theorem \ref{thm-compact} is lengthy, and we follow the idea of the proof of Theorem 4.7 in Section 4 in \cite{C-GH18} but now a more detailed analysis of the limit holomorphic building is necessary. To this end, we need some preparations.

\begin{lemma} \label{lemma-floor-ceil}
For any $a \geq 0$ and $\lambda \in \N$, we have 
\begin{itemize}
\item[(i)] $\floor*{a} - \lambda \floor*{\frac{a}{\lambda}} \geq 0$;
\item[(ii)] $\ceil*{a} - \lambda \ceil*{\frac{a}{\lambda}} \geq - \lambda + 1$.
\end{itemize}
\end{lemma}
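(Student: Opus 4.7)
The plan is to treat the two inequalities separately, using in each case only the defining bracketing inequalities for $\floor{\cdot}$ and $\ceil{\cdot}$ together with the fact that multiplying an integer by the integer $\lambda$ again yields an integer, so that strict inequalities between integers can be promoted by~$1$.

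For (i), I would start from the trivial bound $\floor{a/\lambda}\le a/\lambda$. Multiplying through by the positive integer $\lambda$ gives $\lambda\floor{a/\lambda}\le a$, and since the left-hand side is an integer, the definition of $\floor{a}$ as the greatest integer $\le a$ yields $\lambda\floor{a/\lambda}\le\floor{a}$, which is exactly the desired inequality.

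For (ii), I would begin from the strict inequality $\ceil{a/\lambda}<a/\lambda+1$ (valid for every real number). Multiplying by $\lambda$ gives $\lambda\ceil{a/\lambda}<a+\lambda\le\ceil{a}+\lambda$, where the last step uses $a\le\ceil{a}$. Now both $\lambda\ceil{a/\lambda}$ and $\ceil{a}+\lambda$ are integers, so the strict inequality upgrades to $\lambda\ceil{a/\lambda}\le\ceil{a}+\lambda-1$, which rearranges to $\ceil{a}-\lambda\ceil{a/\lambda}\ge -\lambda+1$ as required.

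There is no real obstacle here; the only point worth flagging is the promotion of a strict inequality between integers by one unit in part~(ii), which is what accounts for the precise constant $-\lambda+1$ (as opposed to a naive $-\lambda$) on the right-hand side. If preferred, one can instead give an equivalent division-with-remainder proof by writing $a=\lambda q+r$ with $q=\floor{a/\lambda}$ and $0\le r<\lambda$, checking the cases $r=0$ and $r>0$ separately for~(ii); this is slightly longer but makes the sharpness of each bound transparent.
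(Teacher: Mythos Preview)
Your proof is correct. Both parts follow cleanly from the defining inequalities for floor and ceiling together with the integer-promotion trick you flag in part~(ii).

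However, the paper takes a genuinely different route: it invokes Hermite's identity
\[
\floor{a} = \sum_{i=0}^{\lambda-1}\floor*{\frac{a+i}{\lambda}}, \qquad
\ceil{a} = \sum_{i=0}^{\lambda-1}\ceil*{\frac{a-i}{\lambda}},
\]
and then bounds each summand below by $\floor{a/\lambda}$ (resp.\ by $\ceil{a/\lambda}-1$ for $i\ge 1$). Your argument is more elementary, needing only the bracketing inequalities $\floor{x}\le x$ and $\ceil{x}<x+1$ plus the observation that an integer $\le a$ is automatically $\le\floor{a}$; it is shorter and self-contained. The paper's approach, while slightly heavier, has the advantage that the Hermite decomposition makes visible exactly which terms contribute the slack, so one can read off directly when equality holds (e.g.\ in~(i), equality occurs iff each $\floor{(a+i)/\lambda}=\floor{a/\lambda}$, i.e.\ the fractional part of $a/\lambda$ is small). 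Your suggested alternative via division with remainder would give the same transparency.
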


\begin{proof} (i) By Hermite equality, we have 
\[ \floor*{a} = \floor*{\lambda \cdot \frac{a}{\lambda}} = \floor*{\frac{a}{\lambda}} + \floor*{\frac{a+1}{\lambda}} + … + \floor*{\frac{a+\lambda-1}{\lambda}}. \]
For each $i \in \{0, …, \lambda-1\}$, we have $\floor*{\frac{a+i}{\lambda}}\geq \floor*{\frac{a}{\lambda}}$. Therefore, 
\[ \floor*{a} \geq \lambda \cdot \floor*{\frac{a}{\lambda}}, \]
which implies the desired conclusion.

(ii) By Hermite equality, we have 
\[ \ceil*{a} = \ceil*{\lambda \cdot \frac{a}{\lambda}} = \ceil*{\frac{a}{\lambda}} + \ceil*{\frac{a-1}{\lambda}} + … + \ceil*{\frac{a-(\lambda-1)}{\lambda}}. \]
For each $i \in \{1, …, \lambda-1\}$, we have $\ceil*{\frac{a-i}{\lambda}} \geq \ceil*{\frac{a}{\lambda}} -1$. Therefore, 
\[ \ceil*{a} \geq \lambda \cdot \ceil*{\frac{a}{\lambda}} - (\lambda -1), \]
which implies the desired conclusion. 
\end{proof}

For a sequence of ${J_{\tau_n}}$-holomorphic curves $\{(u_n, \tau_n)\}_{n \geq 1}$ in $\mathcal M_{m}$, by the SFT compactness in \cite{BEHWZ03}, a subsequence converges to a holomorphic building consisting of curves in $S\partial E(a,b)$, $\overline{X}$ and $S \partial E(\ep, \ep S)$. 
Instead of analyzing each individual curve in such a limiting building separately, we make certain identifications to treat various unions of curves with matching asymptotics together as a single connected component.
By the convention in \cite{C-GH18}, a component $C$ in $\overline{X}$ consists of 
\begin{itemize}
\item{} curves in $\overline{X}$, denoted by $\{u^p\}_{1 \leq p \leq P}$;
\item{} components in $S\partial E(\ep,\ep S)$ without negative ends, denoted by $\{W^q\}_{1\leq q \leq Q}$, which themselves consist of connected matching unions of curves, and whose unmatched positive ends match with negative ends of the $\{u^p\}_{1\leq p \leq P}$.
\end{itemize}
Further the connected component $C$ has a single unmatched negative end, which we denote to be an end of $u^1$. We also use the same notation for the asymptotic limits as in Lemma \ref{lemma-2}, namely
\begin{itemize}
\item{} each $u^p$ has positive ends $\{\alpha_1^{r^p_1}, …, \alpha_1^{r^p_{n_1}}, \alpha_2^{s^p_1}, …, \alpha_2^{s^p_{n_2}}\}$;
\item{} each $u^p$ has negative ends $\{\beta_1^{t^p_1}, …, \beta_1^{t^p_{n_3}}, \beta_2^{u^p_1}, …, \beta_2^{u^p_{n_4}}\}$. 
\end{itemize}

Also, we again assume $u^p$ is an $m^p$-fold cover of a somewhere injective curve $\tilde{u}^p$ for each $p \in \{1, …, P\}$, and follow the notation before Corollary \ref{cor-1} for this underlying curve. Note that, for genus reasons, $\sum_{p} n_3^p + \sum_{q} n_4^p = P + Q$.  We are interested in the Fredholm index of $C$. By matching-index formula (\ref{f-ind-glue}),  
\begin{equation}\label{C-sum}
{\rm ind}(C) = \sum_{p=1}^P {\rm ind}(u^p) + \sum_{q=1}^{Q} {\rm ind}(W^q).
\end{equation}

The remaining curves of the limiting holomorphic building can be identified as follows. We let $C_b$ be the connected union of curves in $S\partial E(\ep,\ep S)$ with matching ends where the unmatched positive ends correspond to the unmatched negative ends of the $C_i$, the components in $\overline{X}$. Then $C_b$ has a single unmatched negative end at $\beta_1^t$ (as the original curve).


Finally, denote by $\{v_j\}_{1 \leq j \leq J}$ for some $J \in \N$ the connected matched unions of the remaining curves in our building. For genus reasons, each $v_j$ has a single unmatched negative end corresponding to a positive end of one of the $C_i$ and, by the maximum principle, must have unmatched positive ends at covers of $\alpha_1$ or $\alpha_2$.

\medskip

We start with the following.

\begin{lemma}\label{noends} The $u^p$ have no negative ends on $\beta_2$.
\end{lemma}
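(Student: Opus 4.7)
The plan is to suppose for contradiction that some $u^p$ has a negative end asymptotic to $\beta_2^N$ for some $N\ge 1$, and to show that the component of the lower symplectization $S\partial E(\ep,\ep S)$ carrying this end would then have Fredholm index exceeding the total index budget of the limit building.

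First I would note that the hypothetical $\beta_2^N$ negative end of $u^p$ must be matched, either directly or through a chain of intermediate symplectization levels of $S\partial E(\ep,\ep S)$, with an unmatched positive end of either some $W^q$ (attached to the same $C_i$ as $u^p$) or of $C_b$. In either case, by formula (\ref{f-ind-glue}) and the zero-dimensionality of the Morse-Bott orbit families on $\partial E(\ep,\ep S)$, the Fredholm index of this component, treated as a single abstractly glued curve, is obtained from (\ref{cz-ellipsoid})-type CZ-contributions of its unmatched ends only, exactly as in (\ref{Wform}). The crucial point is that an unmatched $\beta_2^N$ positive end contributes $2(N+\floor{NS})\ge 2NS$ to this index, while the other unmatched ends contribute an amount bounded independently of $S$.

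Second I would extract the resulting lower bound. If $W^q$ carries this $\beta_2^N$ end then, since all unmatched ends of $W^q$ are positive, ${\rm ind}(W^q)\ge 2NS - C_1$ for a constant $C_1$ depending on $N$ and the other asymptotic data but not on $S$. If instead $C_b$ does then, since its unique unmatched negative end is $\beta_1^t$ with $t=2m+k+1$ (contributing $-2t$), we obtain ${\rm ind}(C_b)\ge 2NS - 2t - C_2$. Either way, taking $S$ sufficiently large---which the setup of $\mathcal{M}_m$ permits by shrinking $\ep$ accordingly so that $E(\ep,\ep S)\subset V$---forces this lower bound to exceed $1$. On the other hand, the total Fredholm index of the limit building equals the virtual dimension of $\mathcal{M}_m$, namely $1$; combined with Corollary \ref{cor-1} and the genericity of $\{J_\tau\}$, every other component of the limit contributes non-negatively to this total, so the index of the component carrying the $\beta_2^N$ end cannot by itself exceed $1$. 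This contradiction rules out any $\beta_2$ negative end on any $u^p$.

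The main obstacle I foresee is ensuring that formula (\ref{f-ind-glue}) applies cleanly to the glued components $W^q$ and $C_b$ (the formula is stated in the paper only for a pair of curves; the extension to multi-curve gluings follows by recursion in the ellipsoid setting where $\dim S=0$), and that the index of every other component of the building is non-negative. The latter is delicate for multiply covered curves and is precisely what Corollary \ref{cor-1} is designed to handle, provided that $S$ is chosen sufficiently large compared to the covering numbers of the asymptotic orbits, consistent with the framework of $\mathcal{M}_m$.
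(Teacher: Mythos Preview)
Your argument has a genuine gap at the step ``every other component of the limit contributes non-negatively to this total.'' The curves $u^{p}$ in the cobordism level may be multiply covered, and multiply covered curves can have negative Fredholm index even for generic families $\{J_\tau\}$. Corollary~\ref{cor-1} does give a lower bound for ${\rm ind}(u^p)$, but that bound contains terms such as $2(n_1 - m^p \tilde n_1)$ and $2(n_2 - m^p \tilde n_2)$ which are nonpositive and can be large in absolute value; nothing in your outline controls them. Consequently you cannot conclude that the large positive index of the $W^q$ or $C_b$ carrying the $\beta_2^N$ end is incompatible with the total index being $0$ (not $1$, incidentally: the curves in $\mathcal M_m$ themselves are rigid). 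In fact, the very next lemma in the paper, Lemma~\ref{lemma-hol-build}, is devoted to producing exactly such lower bounds on ${\rm ind}(C_i)$, and its proof \emph{uses} Lemma~\ref{noends}; so invoking that kind of control here is circular.

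There is also a secondary issue with ``taking $S$ sufficiently large.'' In Theorem~\ref{thm-compact} the pair $(\ep, S)$ is fixed when one forms $\mathcal M_m$, and by Theorem~\ref{thm-gluing} one has $S_m \in (t,\, t+1)$, so $S$ is only barely larger than $t$. You cannot enlarge $S$ mid-argument, and you certainly cannot choose it large relative to covering numbers that are only revealed by the limit building.

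The paper's proof avoids both problems by abandoning the global index balance and instead looking directly at the underlying somewhere injective curve $\tilde u^{p}$, for which ${\rm ind}(\tilde u^{p}) \ge 0$ holds by genericity. The total positive contribution to $\sum_p {\rm ind}(\tilde u^{p})$ from the $\alpha_1,\alpha_2$ ends is bounded above by the corresponding quantity for the original curve (using Lemma~\ref{lemma-mix} to pass through the $v_j$), and the index-$0$ condition together with $t<S$ forces this to be smaller than the negative contribution a single $\beta_2$ end would produce. This needs only $S>t$, not $S$ large, and never appeals to nonnegativity of the possibly multiply covered $u^p$.
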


\begin{proof} Ignoring the superscripts, the index of the underlying somewhere injective curve $\tilde{u}$ is given by
\begin{align*}
{\rm ind}(\tilde{u}) & = - 2 + 2 \sum_{i=1}^{\tilde{n}_1} \left(\tilde{r}_i + \ceil*{\frac{\tilde{r}_i}{k+\ep}}\right) + 2 \sum_{i=1}^{\tilde{n}_2} \left(\tilde{s}_i + \ceil*{\tilde{s}_i(k+\ep)}\right)\\
& - 2 \sum_{i=1}^{\tilde{n}_3} \left(\tilde{t}_i + \floor*{\frac{\tilde{t}_i}{S}} \right) - 2 \sum_{i=1}^{\tilde{n}_4} \left(\tilde{u}_i + \floor*{\tilde{u}_i S}\right) \ge 0.
\end{align*}
Now, only the first two sums here give a positive contribution, and this is bounded by the corresponding sums in the index formula for $u$ (as each positive end of $\tilde{u}$ is covered by a positive end of $u$). Summing over $p$, an upper bound is
$$2 \sum_{i,p} \left(r^p_i + \ceil*{\frac{r^p_i}{k+\ep}}\right) + 2 \sum_{i,p} \left(s^p_i + \ceil*{s^p_i(k+\ep)}\right).$$

Meanwhile, the index of a $v_j$ with $m$ positive ends simply covering $\alpha_1$ and $\kappa \in \{0,1\}$ positive ends on $\alpha_2$ and a single negative end covering $\alpha_1^r$ is given by
$${\rm ind}(v_j) = 4m + 2\kappa(2+k) - 2\left(r + \ceil*{\frac{r}{k+\ep}}\right)$$
and if the negative end covers $\alpha_2^s$ the index is
$${\rm ind}(v_j) = 4m + 2\kappa(2+k) - 2(r + \ceil*{s(k+\ep)}).$$
By Lemma \ref{lemma-mix} our $v_j$ all have nonnegative index. Hence, summing over all $p$ and $j$ the upper bound on the positive terms in ${\rm ind}(\tilde{u})$ is bounded above in turn by $4m + 2(2+k)$ where the $m$ now refers to the asymptotic limits of our original curve. But this curve has index $0$ and a single negative end on $\beta_1^t$ and so
$$4m + 2\kappa(2+k) = 2\left(t + \ceil*{\frac{t}{S}}\right) < 2S$$ as $t<S$. The result follows.
\end{proof}

The following lemma is crucial.

\begin{lemma} \label{lemma-hol-build} Let $C$ be a component in $\overline{X}$ whose unmatched negative end lies at $\beta_1^{t^1_1}$. Then
${\rm ind}(C) \geq 2 \sum_{p} \left(n^p_2 - m^p \tilde{n}^p_2\right) - 2\floor*{\frac{t^1_1}{S}}$. There is equality only if there are no $W^q$.

\end{lemma}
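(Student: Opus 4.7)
The plan is to decompose ${\rm ind}(C)$ piece by piece, apply Corollary~\ref{cor-1} to each $u^p$ and a direct Fredholm computation to each $W^q$, and then close the inequality via Riemann--Hurwitz. Because every Reeb orbit on $\partial E(\ep,\ep S)$ is non-degenerate ($\dim S=0$), the gluing formula \eqref{f-ind-glue} immediately gives ${\rm ind}(C)=\sum_p{\rm ind}(u^p)+\sum_q{\rm ind}(W^q)$. Each $W^q$ is a connected genus-zero union in $S\partial E(\ep,\ep S)$ whose only unmatched ends are positive, and, by Lemma~\ref{noends} combined with the hypothesis that these positive ends match $u^p$-negative ends, they all lie on $\beta_1$-covers $\beta_1^{T_{q,j}}$ for $j=1,\dots,M_q$. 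Summing \eqref{dfn-f-ind} and \eqref{cz-ellipsoid} over the sub-curves of $W^q$ and cancelling the internally matched Robbin--Salamon contributions in pairs yields
\[
{\rm ind}(W^q)=2M_q-2+2\sum_{j=1}^{M_q}\bigl(T_{q,j}+\floor*{T_{q,j}/S}\bigr).
\]

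For each $u^p$, Corollary~\ref{cor-1} (with $n_4^p=0$ by Lemma~\ref{noends}) gives
\[
{\rm ind}(u^p)\ge 2(m^p-1)+2(n_1^p-m^p\tilde n_1^p)+2(n_2^p-m^p\tilde n_2^p)+2A^p+2B^p-2C^p,
\]
with $A^p,B^p,C^p\ge 0$ by Lemma~\ref{lemma-floor-ceil}(i). Summing over $p,q$ and using the tree identities $\sum_p n_3^p=P+Q$ and $\sum_q M_q=P+Q-1$, together with the splitting $\sum_{p,i}\floor*{t_i^p/S}=\floor*{t_1^1/S}+\sum_{q,j}\floor*{T_{q,j}/S}$, the $\floor*{T_{q,j}/S}$ contributions inside $-2C^p$ cancel exactly against those inside ${\rm ind}(W^q)$, leaving the clean residual $-2\floor*{t_1^1/S}+2\sum_p m^p\sum_i\floor*{\tilde t_i^p/S}$. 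After this bookkeeping, the target inequality reduces to checking
\[
\sum_p(m^p\tilde n_1^p-n_1^p)\ \le\ \sum_p m^p+P+Q-2,
\]
modulo extra non-negative slack from $A^p,B^p$, $\sum_p m^p\sum_i\floor*{\tilde t_i^p/S}$, and $\sum_j T_{q,j}-M_q\ge 0$.

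To finish, I would invoke Riemann--Hurwitz for the genus-zero branched cover $u^p\to\tilde u^p$: the total branching is $2m^p-2$, so $\sum_{i\in\{1,2,3\}}(m^p\tilde n_i^p-n_i^p)=2m^p-2$. Since $C$ is connected every $u^p$ has $n_3^p\ge 1$, hence $\tilde n_3^p\ge 1$ and $\sum_p m^p\tilde n_3^p\ge \sum_p m^p$, forcing $\sum_p(m^p\tilde n_3^p-n_3^p)\ge \sum_p m^p-(P+Q)$. Substituting into the Riemann--Hurwitz identity and discarding the non-negative $\alpha_2$-branching contribution gives $\sum_p(m^p\tilde n_1^p-n_1^p)\le \sum_p m^p-P+Q\le \sum_p m^p+P+Q-2$, the last step using $P\ge 1$.

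For the equality clause, observe that if $Q\ge 1$ each $W^q$ contributes ${\rm ind}(W^q)\ge 4M_q-2\ge 2$ (since $T_{q,j}\ge 1$), and correspondingly $2\sum_q(M_q-1+\sum_j T_{q,j})\ge 4P+2Q-4\ge 2$ supplies strict slack beyond what Riemann--Hurwitz requires, forcing strict inequality. Equality therefore forces $Q=0$, whence $P=1$ and $C=u^1$ is a single curve. The main obstacle will be keeping the cancellation of the $\floor*{T_{q,j}/S}$ terms between $-2C^p$ and ${\rm ind}(W^q)$ consistent and verifying the tree identity $\sum_q M_q=P+Q-1$ correctly counts matched ends, so that the final comparison reduces neatly to a Riemann--Hurwitz statement.
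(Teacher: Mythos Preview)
Your main inequality argument is correct and takes a genuinely different route from the paper. Where the paper combines the $\alpha_1$ floor terms with $n_1^p$ via $1+\floor{\,\cdot\,}=\ceil{\,\cdot\,}$ and then applies the ceiling inequality (Lemma~\ref{lemma-floor-ceil}(ii)) to obtain $\sum_p n_1^p+A^p+(n_1^p-m^p\tilde n_1^p)\ge \sum_p n_1^p-\sum_p m^p$, you instead drop $A^p\ge 0$ entirely and bound $\sum_p(m^p\tilde n_1^p-n_1^p)$ by Riemann--Hurwitz for the genus-zero branched cover $u^p\to\tilde u^p$. One small correction: Riemann--Hurwitz gives $\sum_{i\in\{1,2,3\}}(m^p\tilde n_i^p-n_i^p)\le 2m^p-2$ (branching at interior points can be strictly positive), not equality; but since you only use the upper bound, the argument is unaffected. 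Your tree identities $\sum_p n_3^p=P+Q$ and $\sum_q M_q=P+Q-1$ are correct (the paper's displayed ``$\sum_p n_3^p+1=P+Q$'' compensates for a missing $-1$ in the preceding line and yields the same $P-1$).

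The equality clause, however, has a genuine gap. Your claim that the $W^q$ contribution ``supplies strict slack beyond what Riemann--Hurwitz requires'' is incorrect: the $W^q$ terms $\sum_q(M_q-1)+\sum_{q,j}T_{q,j}$ are precisely what close the Riemann--Hurwitz deficit, not extra. Concretely, take $P=1$, $Q\ge 1$, $\tilde n_3^1=1$, all $T_{q,j}=1$. Then $n_3^1=1+Q$ and Riemann--Hurwitz (even with inequality) allows $m^1\tilde n_1^1-n_1^1$ as large as $m^1-1+Q$, while your reduced target reads $m^1-1+\sum_{q,j}T_{q,j}=m^1-1+Q$. So the inequality is saturated with $Q\ge 1$ and your argument gives no contradiction. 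The paper avoids this by simply \emph{dropping} $\sum_{(i,p)\ne(1,1)}t_i^p\ge 0$ before the final estimate; equality then forces this sum to vanish, hence there are no matched $\beta_1$-ends and $Q=0$. In your framework you would instead need to chase the extra slack coming from ${\rm ind}(\tilde u^p)=0$ (required for equality in Corollary~\ref{cor-1}), which rules out the problematic configurations, but you have not done this.
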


\begin{proof} For the first term in (\ref{C-sum}), by Corollary \ref{cor-1},  
\begin{align*}
\frac{\sum_p {\rm ind}(u^p)}{2} & \geq \left(\sum_{p} m^p -P\right) + \sum_p \left(n^p_1 - m^p \tilde{n}^p_1\right) + \sum_{p} \left(n^p_2 - m^p \tilde{n}^p_2\right) \\ 
& + \left(\sum_{i, p}  \floor*{\frac{r^p_i}{k+\ep}} - \sum_{i, p} m^p \floor*{\frac{\tilde{r}^p_i}{k+\ep}} \right) + \left(\sum_{i, p} \floor*{s^p_i(k+\ep)} - \sum_{i,p}m^p \floor*{\tilde{s}^p_i(k+\ep)}\right)\\
& -  \left(\sum_{i, p}  \floor*{\frac{t^p_i}{S}} - \sum_{i,p} m^p\floor*{\frac{\tilde{t}^p_i}{S}} \right). 
\end{align*}
Here we are using Lemma \ref{noends} to drop terms involving negative ends asymptotic to $\beta_2$.

Let us focus on the sums with $s_i^p$ and $\tilde{s}^p_i$. Note that each end of $u^p$ corresponds to a single end of $\tilde{u}^p$, and if $s^p_j$ corresponds to $\tilde{s}^p_1$, say, then we have $s^p_j = \lambda_j \tilde{s}^p_1$ for some $\lambda_j \in \N$. Then by Lemma \ref{lemma-floor-ceil} (i), 
\begin{equation*}
\floor*{s^p_j(k+\ep)}  = \floor*{\lambda_j \tilde{s}^p_1(k+\ep)} \geq \lambda_j \floor*{\tilde{s}^p_1(k+\ep)}. 
\end{equation*}
For each $p$, summing up over all $j$ such that $s^p_j$ corresponds to $\tilde{s}^p_1$, we have $\sum_j \lambda_j = m^p$. Therefore, still for each $p$, summing up over all $\tilde{s}^p_i$ for $i \in \{1, …, \tilde{n}^p_2\}$, we have 
\begin{equation} \label{quote1}
\sum_{j=1}^{n^p_2} \floor*{s^p_j(k+\ep)} \geq m^p \sum_{i=1}^{\tilde{n}^p_2} \floor*{\tilde{s}^p_i(k+\ep)}.
\end{equation}

Finally we sum over all $p$, and we get $\sum_{i, p} \floor*{s^p_i(k+\ep)} - \sum_{i,p}m^p \floor*{\tilde{s}^p_i(k+\ep)} \geq 0$. 

Then by adding $\sum_{q=1}^{Q} {\rm ind}(W^q)$ as in (\ref{Wform}), we get 
\begin{align*}
\frac{{\rm ind}(C)}{2} & \geq \left(\sum_{p} m^p -P\right) +  \sum_p \left(n^p_1 - m^p \tilde{n}^p_1\right) + \sum_{p} \left(n^p_2 - m^p \tilde{n}^p_2\right) \\ 
& + \left(\sum_{i, p}  \floor*{\frac{r^p_i}{k+\ep}} - \sum_{i, p} m^p \floor*{\frac{\tilde{r}^p_i}{k+\ep}} \right) + \left(\sum_{i,p} m^p\floor*{\frac{\tilde{t}^p_i}{S}}-  \floor*{\frac{t_1^1}{S}}\right) \\
& + \sum_{p} n_3^p  - Q + \sum_{(i,p) \neq (1,1)} t^p_i. 
\end{align*}

Now, $\sum_{p} n_3^p + 1 = P+Q$. Therefore, we can simplify the estimation as follows,
\begin{align*}
\frac{{\rm ind}(C)}{2} & \geq \sum_{p} m^p + \sum_p \left(n^p_1 - m^p \tilde{n}^p_1\right) + \sum_{p} \left(n^p_2 - m^p \tilde{n}^p_2\right) \\
& \,\,\,\,\,+ \left(\sum_{i, p}  \floor*{\frac{r^p_i}{k+\ep}} - \sum_{i, p} m^p \floor*{\frac{\tilde{r}^p_i}{k+\ep}} \right) + \left(\sum_{i, p} m^p\floor*{\frac{\tilde{t}^p_i}{S}}-  \floor*{\frac{t_1^1}{S}}\right) -1.
\end{align*}
Note that if no ends are asymptotic to $\alpha_1$ then our required inequality follows immediately, as the second and fourth terms vanish. Otherwise we proceed as follows.

For each $p$, since $\floor*{\frac{r_i^p}{k+\ep}}$ is irrational, we know $1+\floor*{\frac{r_i^p}{k+\ep}} = \ceil*{\frac{r_i^p}{k+\ep}}$. Therefore, 
\begin{align*}
\sum_p n^p_1 +  \sum_{i, p}  \floor*{\frac{r^p_i}{k+\ep}} & = \sum_p \sum_{i=1}^{n^p_1} \left(1 +  \floor*{\frac{r^p_i}{k+\ep}}\right) = \sum_p \sum_{i=1}^{n^p_1} \ceil*{\frac{r^p_i}{k+\ep}}. 
\end{align*}
Now, similarly to the above, assume the end $r^p_j$ of $u^p$ corresponds to $\tilde{r}^p_1$. Then  $r^p_j = \lambda_j \tilde{r}^p_1$ and by (ii) in Lemma \ref{lemma-floor-ceil}, 
\begin{align*}
\ceil*{\frac{r^p_j}{k+\ep}} & = \ceil*{\frac{\lambda_j \tilde{r}^p_1}{k+\ep}} \geq \lambda_j \ceil*{\frac{\tilde{r}^p_1}{k+\ep}} - \lambda_j + 1. 
\end{align*}
For each $p$, summing up over all $j$ such that $r^p_j$ corresponds to $\tilde{r}^p_1$, we have $\sum_j \lambda_j = m^p$. Therefore, for each $p$, summing over all $\tilde{r}^p_i$ for $i \in \{1, …, \tilde{n}^p_1\}$, we have
\begin{equation} \label{quote2}
\begin{split}
\sum_{j=1}^{n^p_1} \ceil*{\frac{r^p_j}{k+\ep}} & \geq m^p \sum_{i=1}^{\tilde{n}_1^p} \ceil*{\frac{\tilde{r}^p_i}{k+\ep}} - m^p + n_1^p \\
& = m^p \sum_{i=1}^{\tilde{n}_1^p} \left(1+ \floor*{\frac{\tilde{r}^p_i}{k+\ep}}\right) - m^p + n_1^p\\
& = m^p \sum_{i=1}^{\tilde{n}_1^p} \floor*{\frac{\tilde{r}^p_i}{k+\ep}} + m^p \tilde{n}_1^p -m^p + n_1^p
\end{split}
\end{equation}
where the first equality comes from the fact that $\ceil*{\frac{\tilde{r}^p_i}{k+\ep}}  = 1 +\floor*{\frac{\tilde{r}^p_i}{k+\ep}}$ since $\frac{\tilde{r}^p_i}{k+\ep}$ is irrational. Then, sum up over all $p$, we get 
\[ \sum_p \sum_{i=1}^{n^p_1} \ceil*{\frac{r^p_i}{k+\ep}} \geq \sum_p \sum_{i=1}^{\tilde{n}_1^p} m^p \floor*{\frac{\tilde{r}^p_i}{k+\ep}} + \sum_{p} m^p \tilde{n}_1^p  - \sum_{p} m^p + \sum_{p} n_1^p.  \]
Then, back to the Fredholm index of $C$, we have 
\begin{align*}
\frac{{\rm ind}(C)}{2} & \geq  \sum_{p} \left(n^p_2 - m^p \tilde{n}^p_2\right) + \sum_{p} n_1^p + \left(\sum_{i,p} m^p\floor*{\frac{\tilde{t}^p_i}{S}}-  \floor*{\frac{t_1^1}{S}}\right) -1\\
& \geq  \sum_{p} \left(n^p_2 - m^p \tilde{n}^p_2\right) + \left(\sum_{i,p} m^p\floor*{\frac{\tilde{t}^p_i}{S}}-  \floor*{\frac{t_1^1}{S}}\right)\\
& \geq \sum_{p} \left(n^p_2 - m^p \tilde{n}^p_2\right) - \floor*{\frac{t_1^1}{S}}.
\end{align*}
The second inequality is due to the fact that at least one $n_1^p$ is positive since we have ends asymptotic to $\alpha_1$ (so $\sum_{p} n_1^p \geq 1$). Thus we complete the proof. \end{proof}

\begin{remark} \label{rmk-lower-bound}
We observe that the term $\sum_{p} \left(n^p_2 - m^p \tilde{n}^p_2\right)$ in Lemma \ref{lemma-hol-build} is bounded from below by $-\sum_{i, p}(s^p_i-1)$, where $s^p_i$ is the winding numbers over $\alpha_2$ of the component $u^p$ of the curve $C$. It is an equality if and only if all $\tilde{s}^p_i= 1$.
\end{remark}

Now, we are ready to give the proof of Theorem \ref{thm-compact}. 

\begin{proof} [Proof of Theorem \ref{thm-compact}] By the SFT compactness in \cite{BEHWZ03}, the limit of curves in $\mathcal M_{m}$ is a holomorphic building with components in levels $S\partial E(a, ka+\ep)$, $\overline{X}$ and $S\partial E(\ep, \ep S)$. Assume that there are $I$-many curves in the symplectic cobordism level $\overline{X}$, after making the identifications as above, which we denote by $C_1, …, C_I$. For each $1 \le i \le I$ let the unmatched end of $C_i$ cover $\beta_1$ with multiplicity $t^{i,1}_{1}$.


By hypothesis, the initial curve has Fredholm index $0$, so we have 
\begin{equation} \label{ind=0}
\sum_{j=1}^J {\rm ind}(v_j) + \sum_{i=1}^I {\rm ind}(C_i) + {\rm ind}(C_b) =0.
\end{equation}
Since the total winding number of the negative ends of all $\{v_j\}_{1 \leq j \leq J}$ is equal to the total winding number of the positive ends of all $\{C_i\}_{1 \leq i \leq I}$, Lemma \ref{lemma-mix}, Lemma \ref{lemma-hol-build} and Remark \ref{rmk-lower-bound} together imply that 
\begin{equation} \label{ind>0}
\sum_{j=1}^J {\rm ind}(v_j) + \sum_{i=1}^I {\rm ind}(C_i) + {\rm ind}(C_b) \geq - 2\sum_{i=1}^I \floor*{\frac{t^{i,1}_1}{S}} + {\rm ind}(C_b). 
\end{equation}

{\it Claim.} For any $i \in \{1, …, I\}$, the winding number $t^{i,1}_1 <S$.  

{\it Proof of Claim.} By Lemma \ref{noends}  there are no ends asymptotic to $\beta_2$.
By assumption therefore, $C_b$ has one negative end $\beta_1^t$ (as the original curve) and positive ends $\{\beta_1^{r_1}, … \beta_1^{r_{l}}\}$, where $r_i = t^{i,1}_1$ for any $i \in \{1, …, I\}$ (recall that $I = l$). Suppose, without loss of generality, ${t^{1,1}_1} \geq S$, then by our hypothesis, $t^{1,1}_1 >t$ since $S>t$. By the index formula (\ref{dfn-f-ind}), 
\begin{align*}
\frac{{\rm ind}(C_b)}{2} &  = (l - 1) + \sum_{i=1}^{l} r_i + \sum_{i=1}^{l} \floor*{\frac{r_i}{S}}    - t. \\
& \ge 1 + \sum_{i=1}^{l} \floor*{\frac{r_i}{S}}.
\end{align*}


Therefore, back to the estimation (\ref{ind>0}), we get $\sum_{j=1}^J {\rm ind}(v_j) + \sum_{i=1}^I {\rm ind}(C_i) + {\rm ind}(C_b) \geq 2$, which contradicts (\ref{ind=0}). Thus we finish the proof of the claim.

Now, our claim  improves the lower bound of (\ref{ind>0}) to be ${\rm ind}(C_b)$ and $\sum_{j=1}^J {\rm ind}(v_j) + \sum_{i=1}^I {\rm ind}(C_i) \geq 0$. Thus by (\ref{ind=0}) we have ${\rm ind}(C_b)=0$, and by Lemma \ref{lemma-se-bottom}  $C_b$ is a trivial cylinder with one positive end $\beta_1^t$. This implies that $I = 1$, that is, only one component $C$ in the symplectic cobordism level $\overline{X}$. Then Remark \ref{rmk-lower-bound}, (3) and (4) in Lemma \ref{lemma-mix} imply that there are no $W^q$ components, that is, $C = u^1$. This curve $C$ can have positive ends as covers of $\alpha_1$ {\it and} covers of $\alpha_2$. Over the matching ends on (possibly covers of) $\alpha_2$, the constraint (\ref{ind=0}) implies that the only possibility for such curves in $SE(a,b)$ is from the last conclusion in (4) in Lemma \ref{lemma-mix}, that is, trivial cylinders with ends $\alpha_2$. Similarly, over the positive end on (possibly covers of) $\alpha_1$, the constraint (\ref{ind=0}) implies that the only possibility for such curves in $S\partial E(a,b)$ is from the last conclusion of (1) in Lemma \ref{lemma-mix}, that is, trivial cylinders with ends $\alpha_1$. In particular, as the unmatched positive ends of our limiting building include a single copy of the simple $\alpha_2$, this implies that the limit curve cannot be a multiple cover, that is, it must be somewhere injective as desired. 
\end{proof}

\subsection{Inductive gluing}\label{indgluing}  Notwithstanding the lengthy proof of Theorem \ref{thm-compact}, we also need to  confirm that the moduli space $\mathcal M_m$ are non-empty. In this subsection, we will use an inductive argument, essentially from McDuff's work \cite{McD18},  to obtain curves in the moduli space $\mathcal M^s_{J, \ep, S}(\alpha_1, …, \alpha_1, \alpha_2; \beta_1^t)$ defined by (\ref{dfn-moduli-2}), for a generic almost complex structure $J$ on the symplectic cobordism $\overline{X}$. 

\begin{lemma} \label{lemma-initial-2} There exist $\lambda>0$, $\ep>0$ and $\ep'>0$ sufficiently small such that $\lambda E(1, k+1+\ep')$ symplectically embeds into $E(a, ka+\ep)$ by inclusion. Moreover, for its associated symplectic cobordism $\overline{X}$ with a generic almost complex structure $J$ with cylindrical ends, there exists a rigid somewhere injective $J$-holomorphic curve with genus $0$, and with positive end $\alpha_2$ and negative end $\beta_1^{k+1}$. 
\end{lemma}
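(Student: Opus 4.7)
The plan is to combine Proposition \ref{prop-broken} with the $J_0$ and ECH index analysis already sketched in Example \ref{ex-1}, upgrading the abstract $J$-holomorphic current provided by ECH to a single somewhere injective embedded cylinder, and then checking by a direct Fredholm computation that this cylinder is rigid.

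First, I would pin down the embedding parameters. The inclusion $\lambda E(1,k+1+\ep') \subset E(a,ka+\ep)$ holds as long as $\lambda \le a$ and $\lambda(k+1+\ep') \le ka+\ep$. Choosing $\lambda$ slightly below $\frac{ka}{k+1}$ (and then $\ep,\ep'$ small and irrational so that $ka+\ep$ and $\lambda(k+1+\ep')$ are irrational) yields a genuine Liouville subdomain with all relevant actions distinct. With this setup, the ECH gradings of $\alpha_2$ and $\beta_1^{k+1}$ were computed in Example \ref{ex-1} to agree, so Proposition \ref{prop-broken} produces a possibly broken $J$-holomorphic current $C$ in the cobordism $\overline X$ from $\alpha_2$ to $\beta_1^{k+1}$ with $I(C)=0$.

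Second, I would rule out degenerations. Since $C$ is a current with total ECH index $0$, and the ECH index is additive over the levels of a broken building with each nontrivial level contributing $\ge 0$, any breaking must distribute the index trivially. The unmatched positive asymptotic is the \emph{simple} orbit $\alpha_2$, so the top-level piece cannot be a multiple cover. In the top symplectization $S\partial E(a,ka+\ep)$ a nontrivial level must strictly decrease action while preserving ECH grading, but Fact 2.2 of \cite{C-GH18} combined with the grading formula (\ref{dfn-ech-gr}) and the irrationality of $ka+\ep$ forces the only intermediate orbit set of the same grading as $\alpha_2$ with action $\le ka+\ep$ to be $\alpha_2$ itself; hence the top symplectization contributes only trivial cylinders. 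A parallel argument at the bottom using that the single negative end is $\beta_1^{k+1}$ with $k+1<k+1+\ep'$ shows that the bottom symplectization is also just a trivial cylinder. Therefore $C$ consists of a single connected component in the cobordism level, with one positive end at $\alpha_2$ and (by the genus/ends analysis next) one negative end at $\beta_1^{k+1}$.

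Third, I would invoke the $J_0$ bound from Example \ref{ex-1}, where $J_0(C)/2=0$. For the somewhere injective component just isolated, inequality (\ref{j0-inequality}) now reads $0 \ge 2(g-1+\delta)+\sum_\gamma(2n_\gamma-1)$, and since we have ends on at least the two distinct embedded orbits $\alpha_2$ and $\beta_1$, the sum contributes at least $2$, forcing equality throughout: $g(C)=0$, $\delta(C)=0$, and a single end over each of $\alpha_2,\beta_1$ (so the negative end is $\beta_1^{k+1}$ as a single $(k+1)$-fold-covered end). This is precisely an embedded genus-zero cylinder. Finally, plugging $n_1=0,\,n_2=1,\,s_1=1,\,n_3=1,\,n_4=0,\,t_1=k+1$ into the index formula (\ref{dfn-f-ind}) using (\ref{cz-ellipsoid}) gives $\mathrm{CZ}^\tau(\alpha_2)=2k+3=\mathrm{CZ}^\tau(\beta_1^{k+1})$, hence $\mathrm{ind}(C)=0$, so the curve is rigid as claimed.

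The main obstacle is step two: ruling out breaking and multiple covers. This is standard ECH bookkeeping but requires being careful about which intermediate orbit sets can share the ECH grading of $\alpha_2$ and about which of them satisfy the strict action inequality across each level; the careful selection of $\lambda$ and the generic irrationality assumptions are what make every such inequality strict and collapse the possible configurations to a single somewhere injective cobordism-level curve.
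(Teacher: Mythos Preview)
Your outline is sound and steps one and three match the paper exactly. The difference is entirely in step two, where the paper takes a shorter route.

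Rather than taking $\lambda$ merely ``slightly below $\tfrac{ka}{k+1}$'', the paper sets $\lambda=\tfrac{ak}{k+1}$ and $\ep'=\tfrac{k+1}{2ak}\ep$, so that the total action of the current is exactly
\[
\mathcal A(\alpha_2)-\mathcal A(\beta_1^{k+1}) \;=\;(ka+\ep)-(k+1)\lambda \;=\;\ep.
\]
Any nontrivial curve in a symplectization layer has strictly positive action, and the cobordism layer must also carry strictly positive action; since the total is only $\ep$, one checks directly that no nontrivial symplectization level can occur (the cheapest candidate, a curve $\alpha_2\to\alpha_1^k$, already costs $\ep$, leaving zero for the mandatory cobordism level). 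Hence the cobordism level is a single curve with the simple positive end $\alpha_2$, so it is somewhere injective. This action argument is completely elementary and bypasses all ECH index bookkeeping.

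Your grading argument, by contrast, has a gap. The assertion that ``each nontrivial level contributes $I\ge 0$'' is standard for the \emph{symplectization} levels (Hutchings' index inequality for currents), but it is not automatic for the \emph{cobordism} level: multiply covered components in a cobordism can in principle carry negative ECH index. You correctly observe that the simple $\alpha_2$ end forces the cobordism piece to be somewhere injective, but that conclusion is only available once you already know the upper symplectization levels are trivial---which is precisely what you are trying to prove. The circularity can be broken (for instance by a short case analysis on the possible intermediate orbit sets, using the action constraint coming from your choice of $\lambda$), but the paper's action trick removes the issue at the outset and is what your final paragraph gestures toward without actually implementing.
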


\begin{proof} Choose $\lambda = \frac{ak}{k+1}$, $\ep>0$ is arbitrarily small, and $\ep' = \frac{k+1}{2ak} \ep$. Then 
\[ \lambda = \frac{ak}{k+1} < a \,\,\,\,\,\mbox{and}\,\,\,\,\, \lambda (k+1+\ep') = ka + \frac{\ep}{2} < ka + \ep. \]
Hence, $\lambda E(1, k+1+\ep') \subset E(a, ka+\ep)$. Then by the first half of Example \ref{ex-1}, there exists a possibly broken $J$-holomorphic current $C$ with positive end $\alpha_2$ and negative end $\beta_1^{k+1}$ (as an orbit set). Meanwhile, its action difference 
\[ \Delta_{\mathcal A} = (ka + \ep) - (k+1)\lambda = (ka + \ep) - ka = \ep \]
which can be arbitrarily small. There is a lower bound on the action of any curves in the symplectization layers which do not cover trivial cylinders. Hence, there are no nontrivial curves in $S \partial E(a, ka+\ep)$, and there is a single curve in the cobordism with a single positive end covering $\alpha_2$. In particular, $C$ is somewhere injective, and Example \ref{ex-1} implies that $C$ is an embedded genus $0$ cylinder between $\alpha_2$ and $\beta_1^{k+1}$. By (\ref{dfn-f-ind}), 
\begin{align*}
{\rm ind}(C) & = ((2 + 2k+1) - \left(2(k+1) + 2 \floor*{\frac{k+1}{k+1+\ep'}} +1\right) \\ 
& = 2(k+1) - 2(k+1) = 0.
\end{align*}
Therefore, $C$ is rigid. Thus we complete the proof. 
\end{proof}

Note that curves provided by Lemma \ref{lemma-initial-2} do not have their negative ends lying on our required ellipsoid $E(\ep, \ep S)$. To resolve this, we will use a trick - glue them with cylinders obtained in \cite{HK18-2}. Recall that, under certain assumptions, Theorem 2 in \cite{HK18-2} says that  there exists a rigid cylinder in the symplectic cobordism between two ellipsoids, from a cover of the {\it short orbit} of an ellipsoid to a cover of  the {\it short orbit} of the other ellipsoid. By gluing these cylinders, we can correct the negative ends of the curves from Lemma \ref{lemma-initial-2} to lie on the desired $E(\ep, \ep S)$. Moreover, by gluing cylinders from \cite{HK18-2}, we will be able to get different combinations of $\alpha_1$ and $\alpha_2$. Here is the result which serves as the initial step. Recall that we always assume that $b = ka + \delta$ for arbitrarily small $\delta>0$ such that $b \notin \Q$.

\begin{prop} \label{prop-gluing} There exist an $\ep>0$ and an irrational $S \in (k+3, k+4)$ with $E(\ep, \ep S) \subset E(a, ka+\delta)$ such that for a generic almost complex structure $J$ with cylindrical ends on its associated symplectic cobordism, the moduli space 
\[ \mathcal M^s_{J, \ep, S}(\alpha_1, \alpha_2; \beta_1^{k+3}) \neq \emptyset. \]
\end{prop}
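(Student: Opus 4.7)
The plan is to apply Proposition \ref{prop-broken} directly to the trivial inclusion $E(\ep, \ep S) \subset E(a, ka+\delta)$ with positive orbit set $\alpha_1 \alpha_2$ and negative orbit set $\beta_1^{k+3}$, and then use ECH-grading uniqueness on both boundaries together with a $J_0$-index computation to pin down the resulting broken current as a single somewhere injective embedded curve of the desired type. I would first fix an irrational $S \in (k+3, k+4)$ and then take $\ep>0$ small enough that $E(\ep, \ep S) \subset E(a, ka+\delta)$; a direct application of (\ref{dfn-ech-gr}) gives $\text{gr}(\alpha_1 \alpha_2) = 2(k+3) = \text{gr}(\beta_1^{k+3})$, so Proposition \ref{prop-broken} produces a possibly broken $J$-holomorphic current $C$ with $I(C)=0$ connecting these orbit sets.

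The main combinatorial input is an enumeration of orbit sets of ECH grading $2(k+3)$ on the two boundaries. Using the integrality hypothesis $k \geq 2$ and the range $S \in (k+3, k+4)$, one checks that $\alpha_1 \alpha_2$ is the unique orbit set of grading $2(k+3)$ on $\partial E(a, ka+\delta)$, and $\beta_1^{k+3}$ is the unique such orbit set on $\partial E(\ep, \ep S)$ (notice $\beta_2$ has grading $2(k+4)$ and so cannot contribute). Since every level of a broken ECH current preserves grading when $I=0$, this forces all symplectization levels of $C$ to consist of trivial cylinders and confines the interesting part of $C$ to the cobordism level.

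Next I would write the cobordism-level current as $\sum_i m_i C_i$ with $C_i$ somewhere injective and $I(C_i) = 0$. Because $\alpha_1$ and $\alpha_2$ each appear in the top orbit set with multiplicity one, only two configurations are possible: a single three-ended component, or a pair of components where one carries the $\alpha_1$ end and the other carries the $\alpha_2$ end (exactness of the cobordism precludes closed components with no positive ends). In the two-component case the grading enumeration above, applied now to $\text{gr}(\alpha_1)=2$ and $\text{gr}(\alpha_2)=2(k+1)$, forces the negative orbit sets of the two components to be $\beta_1$ and $\beta_1^{k+1}$ respectively, totalling $\beta_1^{k+2} \ne \beta_1^{k+3}$. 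Hence $C$ must be a single somewhere injective curve with precisely the asymptotics prescribed by the moduli space (\ref{dfn-moduli-2}) with $m=1$.

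Finally, (\ref{j0-index}) evaluates to $J_0(C)=2$, and inserting this into (\ref{j0-inequality}) with $n_{\alpha_1}, n_{\alpha_2}, n_{\beta_1} \geq 1$ forces $g(C)=\delta(C)=0$ and $n_{\beta_1}=1$; thus $C$ is embedded of genus zero with simple positive ends on $\alpha_1, \alpha_2$ and a single negative end on $\beta_1^{k+3}$, exhibiting a nonempty element of $\mathcal M^s_{J, \ep, S}(\alpha_1, \alpha_2; \beta_1^{k+3})$. I expect the main technical obstacle to be the orbit-set enumeration isolating $\alpha_1 \alpha_2$ and $\beta_1^{k+3}$ as the unique generators in grading $2(k+3)$, together with the parallel enumerations of generators in gradings $2$ and $2(k+1)$ used in the two-component case; these are exactly where the integrality assumption $k \geq 2$ and the careful placement $S \in (k+3, k+4)$ are essential.
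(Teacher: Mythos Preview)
Your approach is correct and genuinely different from the paper's. The paper builds the curve by an explicit gluing construction: it first produces a cylinder $\alpha_2 \to \gamma_1^{k+1}$ into an intermediate ellipsoid $\lambda E(1,k+1+\ep')$ via Lemma \ref{lemma-initial-2} (using a small-action argument to rule out breaking), then glues on Hind--Kerman cylinders from \cite{HK18-2} and a McDuff pair-of-pants from \cite{McD18} to assemble a curve with the correct ends in the final cobordism. Your route is more economical: you apply Proposition \ref{prop-broken} directly to the pair $(\alpha_1\alpha_2,\beta_1^{k+3})$ and then use the rigidity of the ECH grading on ellipsoid boundaries together with the $J_0$-inequality to force the resulting current to be a single genus-zero somewhere injective curve. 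The paper's method is more constructive (and generalizes directly to the inductive step in Theorem \ref{thm-gluing}), while yours avoids the obstruction-bundle gluing machinery entirely for the base case.

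One point deserves tightening. In the two-component case you write that the grading enumeration ``forces the negative orbit sets of the two components to be $\beta_1$ and $\beta_1^{k+1}$'', implicitly assuming $I(C_1)=I(C_2)=0$. But the ECH index is not additive over disjoint unions here: with $p+q=k+3$ one computes $I(C_1)+I(C_2)=(2-2p)+(2(k+1)-2q)=-2$, not $0$. The correct argument is that for generic $J$ each somewhere injective component satisfies $I(C_i)\geq 0$ by the ECH index inequality, and this already gives $p\leq 1$, $q\leq k+1$, hence $p+q\leq k+2<k+3$, a contradiction. You should also make explicit why each level of the broken current has $I\geq 0$ (for the symplectization levels this follows because on an ellipsoid boundary the grading and action orderings of orbit sets coincide, so action monotonicity gives $I\geq 0$; $I=0$ then forces equal actions and hence trivial cylinders).
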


\begin{proof} First, we will provide a curve in the moduli space $\mathcal M^s_{J, \ep_0, S_0}(\alpha_2; \beta_1^{k+1})$ for some irrational number $S_0 \in (k+1, k+2)$ and $\ep_0$ such that $E(\ep_0, \ep_0 S_0) \subset E(a, ka + \delta)$. Fix any irrational $S_0 \in  (k+1, k+2)$ and a sufficiently small $\ep_0>0$ such that $E(\ep_0, \ep_0 S_0) \subset \lambda E(1, k+1+\ep')$ where $\lambda$ and $\ep'$ are taken as in Lemma \ref{lemma-initial-2}. In particular,  
\begin{equation} \label{S}
\ep_0 S_0 \leq \lambda(k+1 + \ep'), \,\,\,\,\mbox{that is}, \,\,\,\, \ep_0\leq \frac{2ka}{2S_0+1}.
\end{equation}
To remove the ambiguity of the notation for Reeb orbits, denote by $\gamma_1$ and $\gamma_2$ the primitive Reeb orbits of $\lambda E(1, k+1+\ep')$ with action $\lambda$ and $\lambda (k+1+\ep')$ respectively. For a generic almost complex structure $J$ of the resulting symplectic cobordism from $\lambda E(1, k+1+\ep')$ to $E(\ep_0, \ep_0 S_0)$, Theorem 2 in \cite{HK18-2} shows that there exists a rigid somewhere injective cylinder 
\begin{equation} \label{hk-1}
\mbox{$C_{\rm HK}$ from $\gamma_1^{k+1}$ (positive end) to $\beta_1^{k+1}$ (negative end)}.
\end{equation}
Indeed, the ellipsoids $E(\ep_0,\ep_0 S_0) \subset \lambda E(1, k+1+\ep')$ is a nested pair, and $k+1 < S = \frac{\ep S}{\ep}$.
Now, by gluing the curve $C'$ provided by Lemma \ref{lemma-initial-2} and $C_{\rm HK}$ in (\ref{hk-1}) along the matching end $\gamma_1^{k+1}$, we obtain a somewhere injective cylinder $C_0$ with Fredholm index ${\rm ind}(C_0) = 0 + 0 - 0 = 0$ by (\ref{f-ind-glue}). In other words, $C_0$ is rigid and $C_0 \in \mathcal M^s_{J, \ep_0, S_0}(\alpha_2; \beta_1^{k+1})$.

Next, by Theorem 2 in \cite{HK18-2} again, there exists a rigid somewhere injective cylinder 
\begin{equation} \label{hk-2}
\mbox{$C_{\rm HK}'$ from $\alpha_1$ (positive end) to $\beta_1$ (negative end)}
\end{equation}
in the symplectic cobordism from the ellipsoid embedding $E(\ep_0, \ep_0 S_0) \subset E(a,ka+\delta)$. This exists for any compatible almost-complex structure, in particular the same almost-complex structure for which $C_0$ is holomorphic.

In the symplectization $S\partial E(\ep_0, \ep_0 S_0)$, due to our hypothesis $k+1< S_0< k+2$, we have 
\[ \ceil*{\frac{1}{S_0}} + \ceil*{\frac{k+1}{S_0}}  = \ceil*{\frac{k+2}{S_0}} = 2. \]
Lemma 2.1 in \cite{McD18} checks there exists a curve $C_{\rm M}$ in the symplectization $S \partial E(\ep_0, \ep_0 S_0)$ with two positive ends $\beta_1$ and $\beta_1^{k+1}$ respectively and one negative end $\beta_1^{k+2}$ of index $0$ and genus $0$. 

Next, choose any 
\[ S \in (k+3, k+4) \,\,\,\mbox{and}\,\,\, 0< \ep< \ep_0 \,\,\,\,\mbox{such that $E(\ep, \ep S) \subset E(\ep_0, \ep_0 S_0)$.}\]
Theorem 2 in \cite{HK18-2} provides a rigid somewhere injective cylinder $C''_{\rm HK}$ from $\beta_1^{k+2}$ of $\partial E(\ep_0, \ep_0 S_0)$ to $\beta_1^{k+3}$ of $\partial E(\ep, \ep S)$.

Finally, we can apply Proposition 2.2 in \cite{McD18} to produce the desired curve in $\mathcal M^s_{J, \ep, S}(\alpha_1, \alpha_2; \beta_1^{k+3})$ by an obstruction bundle gluing of the curve $C_0, C'_{\rm HK}$ with $C_{\rm M}$ and $C''_{\rm HK}$. 
Figure \ref{figure-initial-step} below illustrates this gluing process. 
\begin{figure}[h]
\includegraphics[scale=0.7]{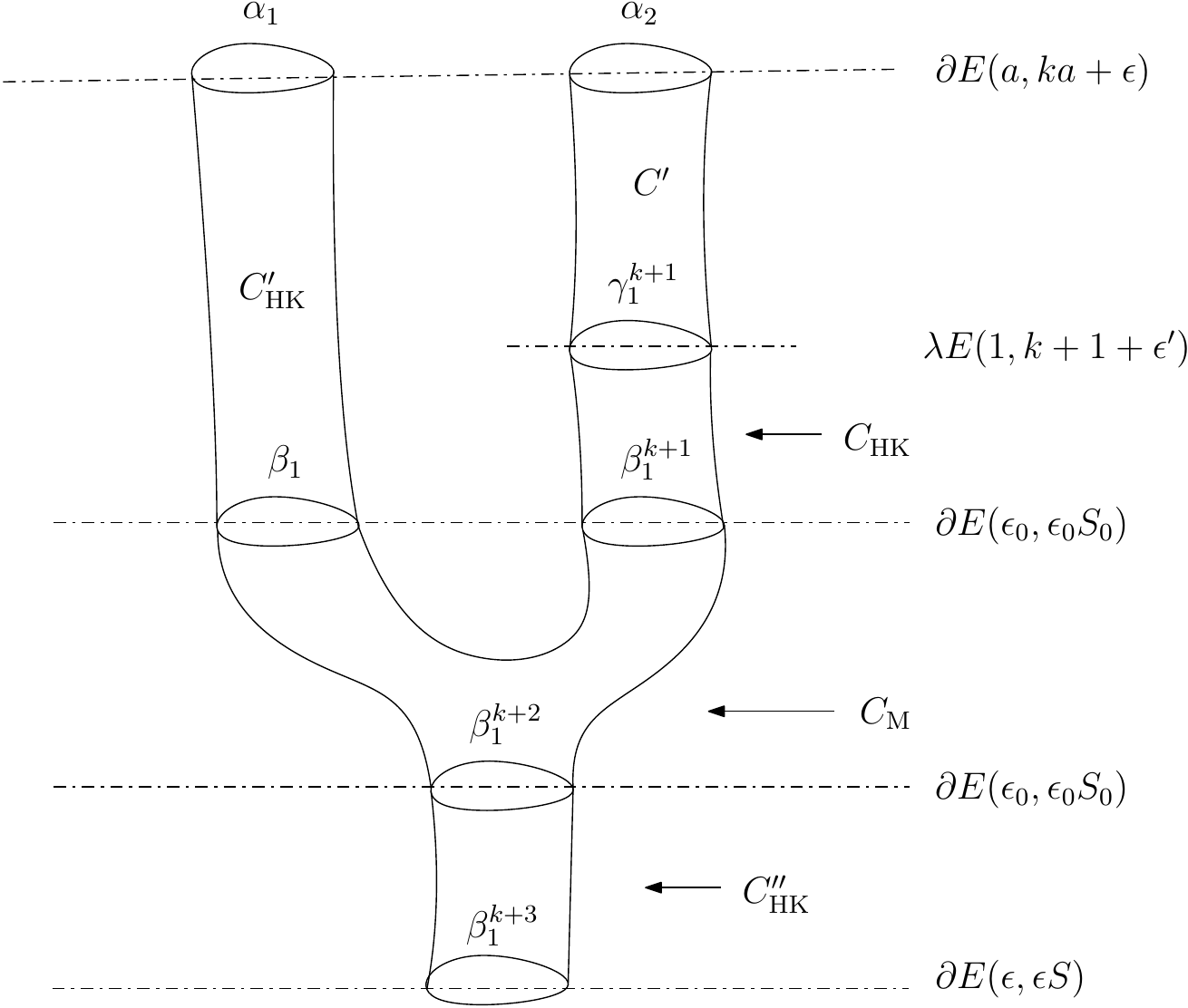}
\caption{Initial step of gluing}\label{figure-initial-step}
\end{figure}
\end{proof}

Now, here comes the main result in this section. 

\begin{theorem} \label{thm-gluing} For any $m \geq 1$, there exist $\ep_m>0$ and $S_m \notin \Q$ such that, for any generic almost complex structure $J$ of the associated symplectic cobordism, the moduli space defined in (\ref{dfn-moduli-2}) satisfies 
\[ \mathcal M^s_{J, \ep_m, S_m}(\underbrace{\alpha_1, …, \alpha_1}_{{\tiny \mbox{$m$-many}}}, \alpha_2; \beta_1^{k+1+2m}) \neq \emptyset\] 
where $\ep_m \to 0$ and $S_m \in (k+1+2m, k+2(m+1))$. In particular, $S_m \to \infty$. 
\end{theorem}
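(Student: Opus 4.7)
The plan is to prove the theorem by induction on $m \ge 1$, with the base case $m = 1$ handled by Proposition \ref{prop-gluing}. For the inductive step I assume the existence of a curve $C_m \in \mathcal M^s_{J, \ep_m, S_m}(\alpha_1,\dots,\alpha_1,\alpha_2;\beta_1^{k+1+2m})$ with $m$ copies of $\alpha_1$, and I will construct a curve in the corresponding moduli space with one more positive end $\alpha_1$ and negative end $\beta_1^{k+3+2m} = \beta_1^{k+1+2(m+1)}$, on a smaller ellipsoid $E(\ep_{m+1},\ep_{m+1}S_{m+1})$ with $S_{m+1}\in(k+3+2m,k+4+2m)$.

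The strategy mirrors exactly the base case construction in Proposition \ref{prop-gluing}. I assemble a broken holomorphic building with four pieces. The first piece is the curve $C_m$ supplied by the inductive hypothesis. The second is an HK cylinder $C'_{\mathrm{HK}}$ from $\alpha_1$ to $\beta_1$ supplied by Theorem 2 of \cite{HK18-2} inside the same cobordism from $E(a,ka+\delta)$ down to $E(\ep_m,\ep_m S_m)$; this contributes the needed extra $\alpha_1$ positive end. The third is McDuff's curve $C_{\mathrm M}$ from Lemma 2.1 of \cite{McD18} living in the symplectization $S\partial E(\ep_m,\ep_m S_m)$, with two positive ends on $\beta_1$ and $\beta_1^{k+1+2m}$ (matching the negative ends of $C_m$ and $C'_{\mathrm{HK}}$) and a single negative end on $\beta_1^{k+2+2m}$; its existence requires
\[
\ceil*{\frac{1}{S_m}} + \ceil*{\frac{k+1+2m}{S_m}} = \ceil*{\frac{k+2+2m}{S_m}} = 2,
\]
which is satisfied because $S_m \in (k+1+2m, k+2+2m)$. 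The fourth is an HK cylinder $C''_{\mathrm{HK}}$ from $\beta_1^{k+2+2m}$ to $\beta_1^{k+3+2m}$ in the cobordism associated to $E(\ep_{m+1},\ep_{m+1}S_{m+1})\subset E(\ep_m,\ep_m S_m)$, for an appropriately chosen $\ep_{m+1}>0$ sufficiently small and $S_{m+1}\in(k+3+2m,k+4+2m)$ irrational, again supplied by Theorem 2 of \cite{HK18-2}.

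With this building in hand, I verify using (\ref{f-ind-glue}) that the total Fredholm index is $0 + 0 + 0 + 0 = 0$: each constituent is rigid ($C_m$ by hypothesis, $C'_{\mathrm{HK}}$ and $C''_{\mathrm{HK}}$ by \cite{HK18-2}, and $C_{\mathrm M}$ by \cite{McD18}). Then I apply the obstruction bundle gluing of Proposition 2.2 in \cite{McD18} along the matching orbits $\beta_1$, $\beta_1^{k+1+2m}$, $\beta_1^{k+2+2m}$ to promote the broken building to a genuine somewhere injective $J$-holomorphic curve in $\mathcal M^s_{J,\ep_{m+1},S_{m+1}}(\alpha_1,\dots,\alpha_1,\alpha_2;\beta_1^{k+3+2m})$ with $m+1$ copies of $\alpha_1$, for a generic $J$ with cylindrical ends. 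Since $\ep_{m+1}$ can be chosen arbitrarily small and $S_{m+1}>k+3+2m$, we have $\ep_m\to 0$ and $S_m\to\infty$ as claimed.

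The main obstacle will be verifying the hypotheses of the obstruction bundle gluing at the multiply-covered matching orbit $\beta_1^{k+1+2m}$. Concretely, one must check that the asymptotic operator on the cover has the correct Conley--Zehnder behavior so that the obstruction bundle admits a non-vanishing section, mirroring the analysis used in the base case $m=1$. Since the configuration is topologically identical to that of Proposition \ref{prop-gluing} (only the multiplicities of $\beta_1$ change), the same argument from \cite{McD18} applies uniformly across the induction.
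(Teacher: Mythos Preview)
Your proposal is correct and follows essentially the same approach as the paper's proof: both argue by induction with base case Proposition~\ref{prop-gluing}, and for the inductive step both assemble the four-piece building $(C_m,\ C'_{\mathrm{HK}},\ C_{\mathrm M},\ C''_{\mathrm{HK}})$ and glue via Proposition~2.2 of \cite{McD18}. The paper does not single out the verification at the multiply-covered matching orbit as a separate obstacle, simply invoking \cite{McD18} directly, but otherwise your outline matches the paper's argument step for step.
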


\begin{proof} Proposition \ref{prop-gluing} gives the initial step $m=1$. Here, we provide the proof of the inductive step. Denote by $t_m: = k+1 + 2m$. Suppose that the conclusion holds for $m>1$, that is, there exists a curve $C_m \in \mathcal M^s_{J, \ep_m, S_m}(\alpha_1, …, \alpha_1, \alpha_2; \beta_1^{t_m})$ with simple $\alpha_1$ in multiplicity $m$, where $S_m \in (t_m, t_m + 1)$. 

Meanwhile, Theorem 2 in \cite{HK18-2} provides a rigid somewhere injective cylinder $C_{\rm HK}$ from $\alpha_1$ of $\partial E(a,b)$ to $\beta_1$ of $\partial E(\ep_m, \ep_m S_m)$. As in Proposition \ref{prop-gluing} we may assume both $C_{\rm HK}$ and $C_m$ are holomorphic with respect to the same compatible almost complex structure.

In the symplectization $S\partial E(\ep_m, \ep_m S_m)$, due to the condition $t_m +1 > S_m > t_m$, we have 
\[ \ceil*{\frac{t_m}{S_m}} + \ceil*{\frac{1}{S_m}}  = \ceil*{\frac{t_m+1}{S_m}} = 2. \]
Hence, Lemma 2.1 in \cite{McD18} implies that there exists a rigid and genus $0$ curve $C_{\rm M}$ with two positive ends $\beta_1^{t_m}$ and $\beta_1$ respectively and one negative end $\beta_1^{t_m+1}$.

Choose an irrational $S_{m+1}$ and small $\ep_{m+1}>0$ such that 
\[ t_{m+1} > S_{m+1}> t_m + 1 \,\,\,\,\,\mbox{and}\,\,\,\,\, E(\ep_{m+1}, \ep_{m+1} S_{m+1}) \subset E(\ep_m, \ep_m S_m). \]
In particular, shrink $\ep_{m+1}$ if necessary such that $\ep_{m+1} < \ep_{m}$. Theorem 2 in \cite{HK18-2} now provides a rigid somewhere injective cylinder $C'_{\rm HK}$ from $\beta_1^{t_m+1}$ in $\partial E(\ep_m, \ep_m S_m)$ to $\beta_1^{t_{m+1}}$ in $\partial E(\ep_{m+1}, \ep_{m+1} S_{m+1})$.

Finally, Proposition 2.2 in \cite{McD18} gives a rigid curve $C_{m+1}$ by gluing $C_m, C_{\rm HK}$ with $C_{\rm M}$ and $C'_{\rm HK}$. 
This $C_{m+1}$ is the desired curve in $\mathcal M^s_{J, \ep_{m+1}, S_{m+1}}(\alpha_1, …, \alpha_1, \alpha_2; \beta_1^{t_{m+1}})$ with $\alpha_1$ in multiplicity $m+1$. Thus we complete the proof of the existence for all $m$ by induction. Figure \ref{figure-ind-2} illustrates this inductive process. 
\begin{figure}[h]
\includegraphics[scale=0.7]{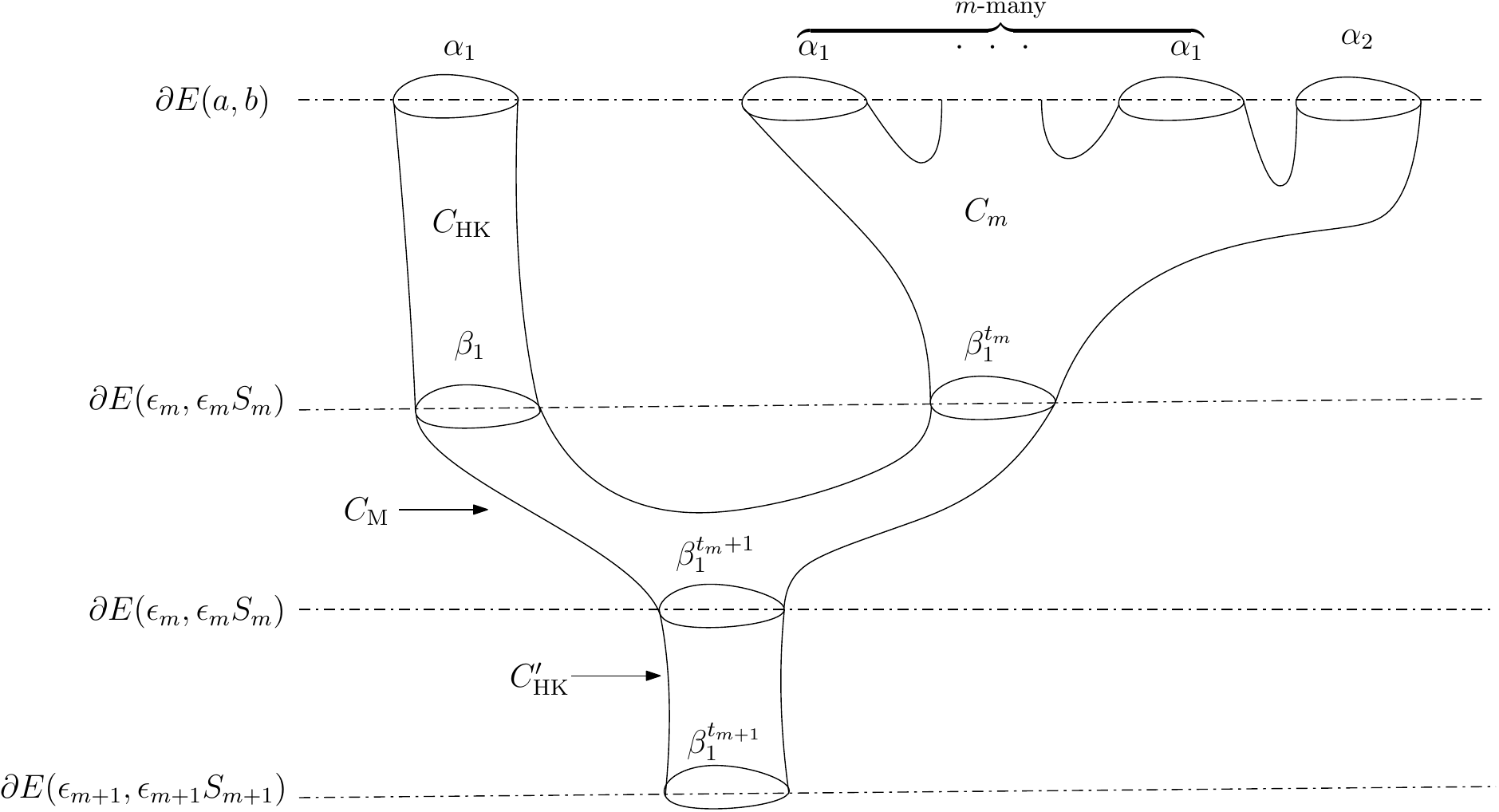}
\caption{Inductive process}\label{figure-ind-2}
\end{figure}

\end{proof} 

\section{Proof of Theorem \ref{thm-2}}\label{sec-proof}
In this section, we will provide the proof of Theorem \ref{thm-2}. Clearly, the proof will be divided into two parts. One is for the obstruction of embeddings, which will follow the main argument from \cite{HO19} but starting with the curves produced in Theorem \ref{thm-gluing}. The other is for the existence of embeddings, which comes from explicit symplectic folding constructions. 

\subsection{Obstruction} \label{ssec-obs} As explained in subsection \ref{ssec-outline}, we will start with 
\begin{equation} \label{curve-in-moduli}
C \in \mathcal M^s_{J, \ep_m, S_m}(\underbrace{\alpha_1, …, \alpha_1}_{{\tiny \mbox{$m$-many}}}, \alpha_2; \beta_1^{t_m})
\end{equation}
for $m \geq 1$ and $t_m = k+1 + 2m$. By Theorem \ref{thm-gluing} the moduli space is nonempty for particular $J$. Our curves are necessarily somewhere injective. Therefore, for generic $J$, we can apply \cite{wendlauto}, Theorem 1 and Corollary 3.17, to deduce automatic regularity (as our limiting Reeb orbits have odd Conley-Zehnder index the term $c_N$ in Theorem 1 is $-1$, and by Corollary 3.17 we may assume $Z(du) =0$). This implies that all curves in the moduli space have the same orientation, and it represents a nontrivial cobordism class. Together with the compactness result Theorem \ref{thm-compact} we deduce that the moduli space remains nonempty as we deform $J$. After a neck-stretching along the boundary $V \approx S^*_g \T^2$ there exists a limiting holomorphic building $C_{\rm lim}$. We will decompose $C_{\rm lim}$ as a union of components $F_0$ and $\{F_i\}_{i=1}^T$ as in Figure \ref{figure-climit}. To be coherent with the notation in subsection \ref{ssec-outline}, we will remove the subscript $m$ in (\ref{curve-in-moduli}), and moreover $d = t_m$. Recall that $S > d$. The following result is an analogue of Lemma 3.5 and Lemma 3.6 in \cite{HO19}. 

\begin{lemma} \label{lemma-index} With $F_0$ and $\{F_i\}_{i=1}^T$ defined as in subsection \ref{ssec-outline}, we have 
\[ {\rm ind}(F_0) = 0 \,\,\,\,\, \mbox{and}\,\,\,\,\, {\rm ind}(F_i) = 1\]
for any $i \in \{1, … , T\}$. Moreover, $T = d+1$. 
\end{lemma}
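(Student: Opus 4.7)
The plan is to combine three ingredients: the Fredholm-index gluing formula applied at the $\partial V$ level, a parity analysis, and a homological constraint on $F_0$.

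First, I would apply (\ref{f-ind-glue}) to the building obtained by abstractly gluing $F_0$ to $F_1, \dots, F_T$ along the matching orbits $\gamma_i \subset \partial V \cong S^*_g \T^2$. By Example \ref{ex-SFT}(1), each such orbit lies in a $1$-dimensional Morse--Bott family, so the formula gives
\[ 0 = {\rm ind}(C) = {\rm ind}(F_0) + \sum_{i=1}^T {\rm ind}(F_i) - T, \]
i.e.\ $\sum_{i=1}^T {\rm ind}(F_i) + {\rm ind}(F_0) = T$.

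Next, I would compute the parities of ${\rm ind}(F_0)$ and ${\rm ind}(F_i)$ via (\ref{dfn-f-ind}), using the trivialization inherited from the inclusion into $\C^2$. By (\ref{c1-cz-torus-2}) and (\ref{cz-ellipsoid}), the quantity ${\rm CZ}^\tau + \dim S/2$ is an odd integer at every positive end on $\alpha_1^r$, $\alpha_2^s$, or on $\gamma_{(k,l)}$; meanwhile ${\rm CZ}^\tau - \dim S/2$ equals $2(k+l)$ (even) at a negative end on $\gamma_{(k,l)}$ and $2d+1$ (odd) at $\beta_1^d$ (using $d<S$). Counting parities yields that ${\rm ind}(F_i)$ is odd and ${\rm ind}(F_0)$ is even. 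Combined with the non-negativity ${\rm ind}(F_i), {\rm ind}(F_0) \ge 0$, coming from automatic transversality of the underlying somewhere-injective components for generic $\{J_\tau\}$ together with index additivity across any multiply-covered sublevels, one deduces ${\rm ind}(F_i) \ge 1$. Substituting into the gluing identity forces ${\rm ind}(F_0) = 0$ and ${\rm ind}(F_i) = 1$ for every $i$.

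To conclude $T = d+1$, I would invoke a homological constraint. The building $F_0$ maps into the cobordism $V \setminus E(\ep_m, \ep_m S_m)$, whose first homology is $H_1(V) = H_1(L) = \Z^2$. In the Maslov-$2$ basis $(e_1, e_2)$ one has $[\gamma_{(k,l)}] = (k,l)$, whereas $[\beta_1] = 0$ because $\beta_1$ bounds a disk in the small ellipsoid. Null-homology of $\partial F_0$ therefore forces $\sum_i (k_i, l_i) = (0,0)$, and in particular $\sum_i (k_i + l_i) = 0$. Plugging this into the explicit expression
\[ {\rm ind}(F_0) \;=\; 2T - 2 + 2 \sum_{i=1}^T (k_i + l_i) - 2d \]
and using ${\rm ind}(F_0) = 0$ yields $T = d+1$.

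The step I expect to be the main obstacle is the non-negativity ${\rm ind}(F_i), {\rm ind}(F_0) \ge 0$ in the presence of multiply-covered sub-curves in the limiting building. The remedy is to decompose each $F_i$ and $F_0$ into their somewhere-injective components $\widetilde u^{\,p}$ (for which ${\rm ind}(\widetilde u^{\,p}) \ge 0$ by the automatic regularity of \cite{wendlauto}) and apply index additivity together with bounds of the same flavor as Corollary \ref{cor-1} to each multiple cover, paralleling the analysis already carried out in Section \ref{sec-compact-glue}.
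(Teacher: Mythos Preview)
Your overall framework matches the paper's exactly: the gluing identity ${\rm ind}(F_0) + \sum_i {\rm ind}(F_i) = T$, the parity observation that each ${\rm ind}(F_i)$ is odd, and the deduction of $T=d+1$ from ${\rm ind}(F_0)=0$ are all in the paper (the paper computes $T=d+1$ with the torus trivialization \eqref{c1-cz-torus} rather than via the homological constraint, but your route is equivalent since $\beta_1$ is contractible in $V$).

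The genuine gap is the step you yourself flag as the obstacle: the non-negativity ${\rm ind}(F_i) \ge 0$ and ${\rm ind}(F_0) \ge 0$. Your one-line justification in the body---``automatic transversality of the underlying somewhere-injective components \dots\ together with index additivity across any multiply-covered sublevels''---does not work. Index additivity in the sense of \eqref{f-ind-glue} applies only across matched ends of a building, not to a branched cover over its underlying simple curve; for an $m$-fold cover $u \to \tilde u$ the discrepancy ${\rm ind}(u) - m\,{\rm ind}(\tilde u)$ involves the correction terms visible in Corollary~\ref{cor-1}, and these can be negative. So knowing ${\rm ind}(\tilde u)\ge 0$ for each simple piece gives nothing for free.

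The paper handles this as follows. For ${\rm ind}(F_0)\ge 0$ it cites Lemma~3.6 of \cite{HO19} directly. For ${\rm ind}(F_i)\ge 0$ it does \emph{not} reuse the Section~\ref{sec-compact-glue} analysis (that section concerns the ellipsoid-to-ellipsoid cobordism, with no $\partial V$ level), but instead introduces a new layered decomposition $F_i = \{F^+_{ij}\} \cup \{F^0_{ij}\} \cup \{F^-_{ij}\}$ adapted to the levels $S\partial E(a,b)$, $E(a,b)\setminus V$, $S\partial V$. The $F^-_{ij}$ contribute nonnegatively by a direct count; the possibly negative contributions come from multiply covered $F^0_{ij}$, and these are controlled end-by-end using the explicit lower bounds of Lemma~\ref{lemma-mix} on the matching $F^+_{ij}$ together with the Riemann--Hurwitz formula, in a four-case analysis depending on whether the relevant end of the underlying simple curve is on $\alpha_1$ or $\alpha_2$ and whether the attached $F^+_{ij}$ carries the unique $\alpha_2$ end. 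One case still yields a potentially negative contribution, and a further argument (counting negative ends of $\tilde u$ and indices of complementary components) is needed to close it. Your remedy paragraph gestures at ``bounds of the same flavor as Corollary~\ref{cor-1}'', but that corollary lives in the wrong cobordism and the actual argument requires substantially more than a parallel of Section~\ref{sec-compact-glue}.
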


\begin{proof} The last conclusion directly comes from a Fredholm index computation and the first conclusion. In fact, by (\ref{dfn-f-ind}) and (\ref{c1-cz-torus}), 
\begin{align*}
0 = {\rm ind}(F_0) & = (T+1-2) + \frac{T}{2} +\frac{T}{2} - \left(2d + 2\floor*{\frac{d}{S}} +1 \right) \\
& = 2T - 2d -2. 
\end{align*}
Therefore, $T = d+1$. Moreover, it suffices to prove ${\rm ind}(F_0) \geq 0$ and ${\rm ind}(F_i) \geq 1$ for any $i \in\{1, …, T\}$. Indeed, since ${\rm ind}(C_{\rm lim}) = 0$, and as the Reeb orbits on $\partial V$ appear in $1$-dimensional families, by (\ref{f-ind-glue}),
\begin{align*}
0 = {\rm ind}(C_{\rm lim}) &= {\rm ind}(F_0) + \sum_{i=1}^T {\rm ind}(F_i) - T \geq 0 + (T-T) = 0. 
\end{align*}
This implies the desired conclusion. The first inequality ${\rm ind}(F_0) \ge 0$ has been proved in Lemma 3.6 in \cite{HO19}, so we are left to show that for any $i \geq \{1, …, T\}$, the Fredholm index ${\rm ind}(F_i) \geq 1$. However, observe that ${\rm ind}(F_i) \geq 0$ implies that ${\rm ind}(F_i) \geq 1$. Indeed, suppose that the positive ends of $F_i$ are $m_i$-many simple $\alpha_1$ and $\kappa$-many simple $\alpha_2$ where $\kappa = \{0,1\}$, while the negative end of $F_i$ is a closed Reeb orbit on $\partial V$ denoted by $\gamma_{(-k_i, -l_i)}$. Then by (\ref{dfn-f-ind}) and (\ref{c1-cz-torus-2}), ${\rm ind}(F_i) = 4m_i + \kappa (2k+4) + 2(k_i +l_i) -1$ which is always odd. Hence, it suffices to show ${\rm ind}(F_i) \geq 0$. 

\medskip

As shown in Figure \ref{figure-climit}, the component $F_i$ consists of curves in different layers. Let us view decompose $F_i$ as a union of $\{F^+_{ij}\}_{j=1}^{m_i^+}$, $\{F^0_{ij}\}_{j=1}^{m_i^0}$, and $\{F^-_{ij}\}_{j=1}^{m_i^-}$, where 
\begin{itemize}
\item{} $F^+_{ij}$ is a maximal connected union of curves in $F_i$ which is formed via gluing along matching ends such that after gluing it has unmatched negative ends only on $\partial E(a,b)$;
\item{} $F^0_{ij}$ is a single curve in $E(a,b) \backslash V$ that does not form a part of any $F^+_{ij}$-type component;
\item{} $F^-_{ij}$ is a component of the remainder, which is a union of curves at and below the layer $S\partial V$. 
\footnote{For component $F_T$ in Figure \ref{figure-climit}, it decomposes as a union of $\{F^+_{Tj}\}_{j=1}^4$, $\{F^0_{Tj}\}_{j=1}^2$, and $F^-_{T1}$, where from left to right,
\begin{itemize}
\item{} both $F^+_{T1}$ and $F^+_{T3}$ have 1 positive end and 1 negative end. $F^+_{T2}$ has 4 positive ends and 1 negative end. $F^+_{T4}$ has 2 positive ends and 1 negative end;
\item{} $F^0_{T1}$ has 1 positive end and 1 negative end. $F^0_{T2}$ has 3 positive ends and 2 negative ends;
\item{} $F^-_{T1}$ lies in $S\partial V$ and it has 2 positive ends and no negative end. 
\end{itemize}}
\end{itemize}
Observe that each $F^-_{ij}$ admits no negative ends. Assume that $F^-_{ij}$ has $s^-_{ij}$-many positive ends, then b) in Proposition 3.4 in \cite{HO19} implies that ${\rm ind}(F^-_{ij}) = 2s^-_{ij} -2 \geq s^-_{ij}$, since there is no finite energy plane in $V \approx T^*\mathbb T^2$ and thus $s_{ij}^- \geq 2$. As a result, 
\begin{align}\label{upper-level}
{\rm ind}(F_i) & = \sum_{j=1}^{m_i^+} {\rm ind}(F^+_{ij}) + \sum_{j=1}^{m_i^0} {\rm ind}(F^0_{ij}) + \sum_{j=1}^{m_i^-} {\rm ind}(F^-_{ij}) - \sum_{j=1}^{m_i^-} s^-_{ij} \nonumber\\
& \geq   \sum_{j=1}^{m_i^+} {\rm ind}(F^+_{ij}) + \sum_{j=1}^{m_i^0} {\rm ind}(F^0_{ij}). 
\end{align}
Therefore, it suffices to prove that (\ref{upper-level}) is non-negative. The following claim is useful. 

\begin{claim} \label{claim-one-neg-end} Each $F^+_{ij}$ has only one negative unmatched end. \end{claim}

\begin{proof} [Proof of Claim \ref{claim-one-neg-end}] By maximality of the components $F^+_{ij}$, each unmatched negative end will eventually, via components in lower layers, connect to the curve $F^0_{i j_0}$ with negative end on $\gamma_i$. As our curves have genus $0$ this can be true only for a single negative end. 
\end{proof}

In order to obtain the desired non-negativity of (\ref{upper-level}), we will carry out some analysis on the Fredholm indices of components $F_{ij}^+$ and $F^0_{ij}$. The general scheme is that a Fredholm index $F^0_{ij}$ could be negative due to the possibility of multiple coverings, but due to cancelling contributions the sum is always nonnegative.

Denote $u: = F_{ij}^0$ for some $i$,$j$. If $u$ is somewhere injective, then ${\rm ind}(u) = {\rm ind}(F_{ij}^0) \geq 0$. 
Otherwise assume that $u$ is a multiple covering of $\tilde{u}$ with covering number $r$, where $\tilde{u}$ is somewhere injective. Assume $u$ has $s^+$-many positive ends and $s^-$-many negative ends, similarly $\tilde{u}$ has $\tilde{s}^+$-many positive ends and $\tilde{s}^-$-many negative ends. We let $m_i$, $n_i$, $\tilde{m}_i$, $\tilde{n}_i$ be the covering numbers at $\alpha_1$ and $\alpha_2$ for $u$ and $\tilde{u}$ respectively.

We may assume ${\rm ind}(\tilde{u}) \ge 0$ and comparing ${\rm ind}(u)$ and $r {\rm ind}(\tilde{u})$ we obtain
\begin{align*}
{\rm ind}(u) & \geq  r{\rm ind}(\tilde{u}) +  (2r-2) - (2(r \tilde{s}^+ - s^+) + (r \tilde{s}^- - s^-)) \\
& + 2\left(\sum \floor*{\frac{m_i}{k+\ep}} - r \sum \floor*{\frac{\tilde{m}_i}{k+\ep}}\right) \\
& + 2\left(\sum \floor{n_i(k+\ep)} - r \sum \floor{\tilde{n}_i(k+\ep)}\right).
\end{align*}
Now, we apply the Riemann-Hurwitz formula.
Explicitly, view $u$ as a curve with domain a punctured $S^2$, and denote by $e_p$ the ramification index at point $p \in S^2$, then
\begin{align*}
2r - 2 & =  \sum_{p \in S^2} (e_p - 1) \\
& \geq \sum_{p \in \Gamma} (e_p-1) = \sum_{p \in \Gamma} (c_p - 1) = r(\tilde{s}^+ + \tilde{s}^-) - (s^+ + s^-)
\end{align*}
where $\Gamma$ is the set of singularity points on $S^2$ which corresponds to the asymptotic ends of $u$ (thus its cardinality is $s^+ + s^-$) and $c_p$ is the covering degree at the end corresponding to $p$. The sum of the $c_i$ covering each particular end is exactly $r$. It will be helpful to write $r \tilde{s}^+ - s^+ = \sum_{\Gamma^+} (c_i-1)$, where the $c_i$ are the covering degrees of the ends of $u$ over the positive ends of $\tilde{u}$. 

With this our estimate becomes 
\begin{align}\label{indexbound}
{\rm ind}(u) & \geq  r{\rm ind}(\tilde{u})  - (r \tilde{s}^+ - s^+) \nonumber \\
& + 2\left(\sum \floor*{\frac{m_i}{k+\ep}} - r \sum \floor*{\frac{\tilde{m}_i}{k+\ep}}\right) \nonumber \\
& + 2\left(\sum \floor{n_i(k+\ep)} - r \sum \floor{\tilde{n}_i(k+\ep)}\right).
\end{align}

Next, we note that by Claim \ref{claim-one-neg-end} each positive end of $u$ is matched to the single negative end of a component $F^+_{ij}$. These components have positive ends which are either simple covers of $\alpha_1$ or a simple cover of $\alpha_2$, and there can be at most one end (over all $F^+_{ij}$ matching with ends of $u$) asymptotic to $\alpha_2$.

By Lemma \ref{lemma-mix}, the $F^+_{ij}$ are either trivial cylinders or else have strictly positive index.
We will consider contributions to our formula from the ends of $u$ and the $F^+_{ij}$ corresponding to each positive end of $\tilde{u}$. 
We claim that the contribution to \eqref{upper-level} from the ends of a $u$ covering each end of $\tilde{u}$ in our estimate \eqref{indexbound} for ${\rm ind}(u)$, plus the associated component $F^+_{ij}$, is nonnegative unless the end of $\tilde{u}$ happens to be asymptotic to $\alpha_1$ and the corresponding $F^+_{ij}$ has a positive end on $\alpha_2$.

In the first case suppose that the end of $\tilde{u}$ is asymptotic to $\alpha^{\tilde{m}}_1$ and none of the covering ends of $u$ match to $F^+_{ij}$ asymptotic to $\alpha_2$. Using Lemma \ref{lemma-mix} (1), to check that the contribution from the covering numbers plus the corresponding $F^+_{ij}$ is nonnegative it suffices to show
$$\sum (2m_i - 2 -c_i + 1) \ge r \floor*{\frac{\tilde{m}}{k+\ep}}.$$
But $\sum m_i = r\tilde{m}$ and $\sum c_i  = r$, so the left hand side is just $$r(2\tilde{m} -1) - s \ge 2r(\tilde{m} -1)$$where $s \le r$ is the number of covering ends, that is, the number of terms in the sum. Hence, we get the inequality as required (recall that $k \ge 2$).

In the second case suppose that the end of $\tilde{u}$ is asymptotic to $\alpha^{\tilde{n}}_2$ and again none of the covering ends of $u$ match to $F^+_{ij}$ asymptotic to $\alpha_2$. Now by Lemma \ref{lemma-mix} (3) for nonnegativity it suffices to show
$$\sum (2n_i(2k-1) +2 -c_i + 1) \ge r \floor{\tilde{n}(k+\ep)}.$$
Now $\sum n_i = r\tilde{n}$, so the left hand side is just $$r(2\tilde{n}(2k-1) -1) + 3s \ge r(3\tilde{n}k -1).$$ Hence, we get the inequality as required. 

In the third case suppose that the end of $\tilde{u}$ is asymptotic to $\alpha^{\tilde{m}}_2$ and one end of one of the $F^+_{ij}$ is asymptotic to $\alpha_2$, say the end corresponding to $n_1$. Using Lemma \ref{lemma-mix} (4) we would like to show
$$(4k-2)n_1 -2k  + 2 -c_1 + 1 +
\sum_2^s (2n_i(2k-1) +2 -c_i + 1) \ge r \floor{\tilde{n} (k+\ep)}.$$
The left hand side is $2(2k-1)\tilde{n}r - r - 2k + 3s$ and we aim to show that $2(2k-1)\tilde{n}r - r - 2k + 3s \geq k \tilde{n} r$. In fact, since $r \geq s$ and $k \geq 2$, we have
\begin{align*}
2(2k-1)\tilde{n}r - r - 2k + 3s - k \tilde{n} r & = ((3k-2) \tilde{n}-1) r - 2k + 3s\\
& \geq (2k\tilde{n} -1)s + 3s - 2k \\
& = (2k\tilde{n} +2)s - 2k \geq 2k(\tilde{n}s -1) + 2s \geq 0
\end{align*}
since $s \geq 1$. Thus we get the required inequality.

Finally suppose that the end of $\tilde{u}$ is asymptotic to $\alpha^{\tilde{m}}_1$ and one end of one of the $F^+_{ij}$ is asymptotic to $\alpha_2$, say the end corresponding to $m_1$. Now from Lemma \ref{lemma-mix} (2) the contribution from ends of $u$ and the $F^+_{ij}$ associated to this end is bounded below by
$$2m_1 - 2k + 2 -c_1 + 1 +
\sum_2^s (2m_i - 2 -c_i + 1) - r \floor*{\frac{\tilde{m}}{k+\ep}}.$$
This is equivalent to
\begin{equation} \label{est-final0}
2r\tilde{m} - r - s - 2k + 4  - r \floor*{\frac{\tilde{m}}{k+\ep}}.
\end{equation}

As $m_1 \ge k+1$, we have $r\tilde{m} = \sum m_i \ge (k+1) + (s -1)\tilde{m}$ so the contribution is at least
\begin{equation} \label{est-final}
2(k+1 + (s-1)\tilde{m}) - r - s -2k + 4 -  r \floor*{\frac{\tilde{m}}{k+\ep}}.
\end{equation}
In the case when $\tilde{m} \le k$, we have (\ref{est-final}) bounded below by $$2(k+s) - r - s -2k + 4 = 4 - (r-s)$$ and in the case when $\tilde{m} \ge k+1$,
\begin{align*}
(\ref{est-final0}) & \geq (k+1 + (k+1)(s-1))\left(2 - \frac{1}{k}\right) - r - s - 2k + 4 \\
& \ge 2k + 2 - 1 - \frac{1}{k} + \frac{9}{2}(s-1) - r - s - 2k + 4\\
& \ge \frac{5}{2} s - (r-s).
\end{align*}
In any case the contribution is bounded below by $2 - (r-s)$.

In summary then, the only negative contributions to the sum \eqref{upper-level}, when we combine the $F^0_{ij}$ with matching $F^+_{ij}$, can come from a curve $F^0_{ij}$ with a positive end asymptotic to $\alpha_1$ and where the corresponding $F^+_{ij}$ has a positive end on $\alpha_2$. There exists at most one such $u = F^0_{ij}$. We focus on this curve and components of the complement. 

We note that if $\tilde{u}$ has a single negative end then it has an odd index and so the term $r{\rm ind}(\tilde{u}) \ge r$ which cancels the negative contribution to \eqref{indexbound}. Alternatively $\tilde{u}$ has two distinct negative ends, and we can suppose these are covered by $s_1$ and $s_2$ negative ends respectively of our curve $u= F^0_{ij}$. One of these $s_1 + s_2$ ends may be matched with $F_0$; the others match with curves in $F^-_{ij}$ and the complement of $u$ and these $F^-_{ij}$ in our building has components which are unions of curves of type $F^0_{ij}$ and $F^{\pm}_{ij}$. These components have nonnegative index, since they do not contain $u$, and if they are not matched to $F_0$ then they have a single unmatched negative end (which implies the index is odd) and so in fact have index at least $1$. At most one component has a curve matched to $F_0$, so we can add $s_1 + s_2 -1$ to our estimate for ${\rm ind}(u)$ in the sum \eqref{upper-level}.

Write $s = s_0$ (the number of ends of $u$ covering the distinguished positive end of $\tilde{u}$), and $s_i = r - (r-s_i)$ for $i=1,2$. Then adding the new terms our lower bound for \eqref{upper-level} becomes $2 + 2r -1 - \sum_0^2 (r-s_i)$. But by the Riemann-Hurwitz formula the sum is bounded above by $2r-2$, so the lower bound is at least $3$.

This gives desired non-negativity of (\ref{upper-level}) and thus we complete the proof. 
\end{proof}


\medskip

Now, we are ready to give the proof of ``only if'' part of Theorem \ref{thm-2}. 

\begin{proof} [Proof of ``only if'' part of Theorem \ref{thm-2}]
Suppose we have a $L(1,x) \hookrightarrow E(a,b)$, that is, a Lagrangian torus with an integral basis having area class $1$, $x$. Up to an arbitrarily small perturbation, assume that $b = ka + \delta \notin \Q$ for an arbitrarily small $\ep>0$. Then by embedding a thin ellipsoid $E(\ep, \ep S)$ inside a Weinstein neighborhood $V$ of the image of $L(1,x)$ inside $E(a,b)$, we obtain a symplectic cobordism $E(a,b) \backslash E(\ep, \ep S)$ for an almost complex structure $J$ with cylindrical ends. Due to Theorem \ref{thm-gluing}, there exist $J$ and curves $C \in \mathcal M^s_{J, \ep = \ep_m, S = S_m}(\alpha_1, …, \alpha_1,\alpha_2; \beta_1^{d})$ with $\alpha_1$ appearing multiplicity $m$ and $d= 2m+k+1$ for $m \geq 1$. By Theorem \ref{thm-compact} the curves persist as we carry out a neck-stretching process along the boundary of the Weinstein neighborhood viewed as a unit co-sphere bundle of $\T^2$. Denote the limit holomorphic building $C_{\rm lim}$. 
Decompose $C_{\rm lim}$ into $F_0$ and $\{F_i\}_{i=1}^T$ as shown in Figure \ref{figure-climit}, then Lemma \ref{lemma-index} shows that $T = d+1$, that is, $F_0$ has exactly $(d+1)$-many positive ends on $\partial V$, denoted by $\{\gamma_{(-k_i, -l_i)}\}_{i =1}^{d+1}$. Since these orbits together bound a contractible loop $\beta_1^d$ inside $U$, we know that 
\begin{equation} \label{top-1}
\sum_{i=1}^{d+1} k_i  = \sum_{i=1}^{d+1} l_i = 0.
\end{equation}

Assume a component $F_i$ has $m_i$-many positive ends on $\alpha_1$ and $\kappa$ on $\alpha_2$ where $\kappa \in \{0,1\}$. Then 
\begin{equation}\label{area-Fi-2}
{\rm Area}(F_i) = m_i a + \kappa b  + k_i + l_i x.
\end{equation}
The Fredholm index formula like (\ref{ind-Fi-2}) and Lemma \ref{lemma-index} imply that $2m_i + \kappa (k+2) + k_i + l_i =1$. By substituting this Fredholm index relation into (\ref{area-Fi-2}), we see
\begin{equation}\label{area-Fi-3}
m_i(a-2) + \kappa(b - (k+2)) \geq -l_i(x-1) -1.
\end{equation}

(1) In this case, as $x=1$ equation (\ref{area-Fi-3}) for a component $F_i$ gives
\[ m_i(a-2) + \kappa(b - (k+2)) \geq -l_i(x-1) -1=-1.\]
We can choose the $F_i$ with $\kappa =1$. If $a \geq 2$ then $b \geq 2 \geq \frac{1}{1 - 1/a}$ as required. Alternatively if $a<2$ then we see that $b - (k+2) \geq -1$ which is equivalent to the inequality as required.

\medskip

(2) Here $x \ge 2$ and we have two cases as follows. Note that since our ellipsoids are open, if there exists an embedded torus $L(1,2)$ then we can also find embedded tori $L(1,x)$ with $x>2$ by deforming in a Weinstein neighborhood. Hence, we will assume that $x >2$.

\medskip

\noindent Case I. Suppose that $l_i \geq 0$ for all $i \in \{1, …, d+1\}$ (thus by (\ref{top-1}), $l_i=0$ for all $i$), then by an appropriate choice of the metric on $V$ as in Section 2 in \cite{HO19}, we know that 
\[ {\rm Area}(F_0) = \sum_{i=1}^{d+1} |k_i| \frac{\ep}{2} - d \ep \]
and, in particular, $\sum_{k_i>0} k_i \geq d$. For those $i \in \{1, ..., d+1\}$ such that $k_i >0$, by (\ref{dfn-f-ind}) and (\ref{c1-cz-torus-2}), depending on whether the positive ends of $F_i$ include $\alpha_2$ or not, the corresponding Fredholm index of $F_i$ is 
\begin{equation} \label{ind-Fi-2}
 {\rm ind}(F_i) = 4m_i + 2k + 2k_i + 3 \,\,\,\,\mbox{or}\,\,\,\,  {\rm ind}(F_i) = 4m_i + 2k_i - 1
 \end{equation}
where $m_i$ is the number of positive end of component $F_i$ on $\alpha_1$. Then in order to have ${\rm ind}(F_i)=1$, the only possibility is $m_i = 0$, the positive end of $F_i$ does not include $\alpha_2$, and $k_i = 1$. In other words, these $F_i$ do not have any positive ends on $\partial E(a,b)$, that is, they are just planes, and there must be $d$ of them. 
Hence, there is only one component, say $F_{d+1}$, with one negative end $\gamma_{(d, 0)}$ on $\partial V$ and $(m+1)$-many positive ends on $\partial E(a,b)$. The following Figure \ref{figure-limit-conf-1} illustrates this configuration. 
\begin{figure}[h]
\includegraphics[scale=0.75]{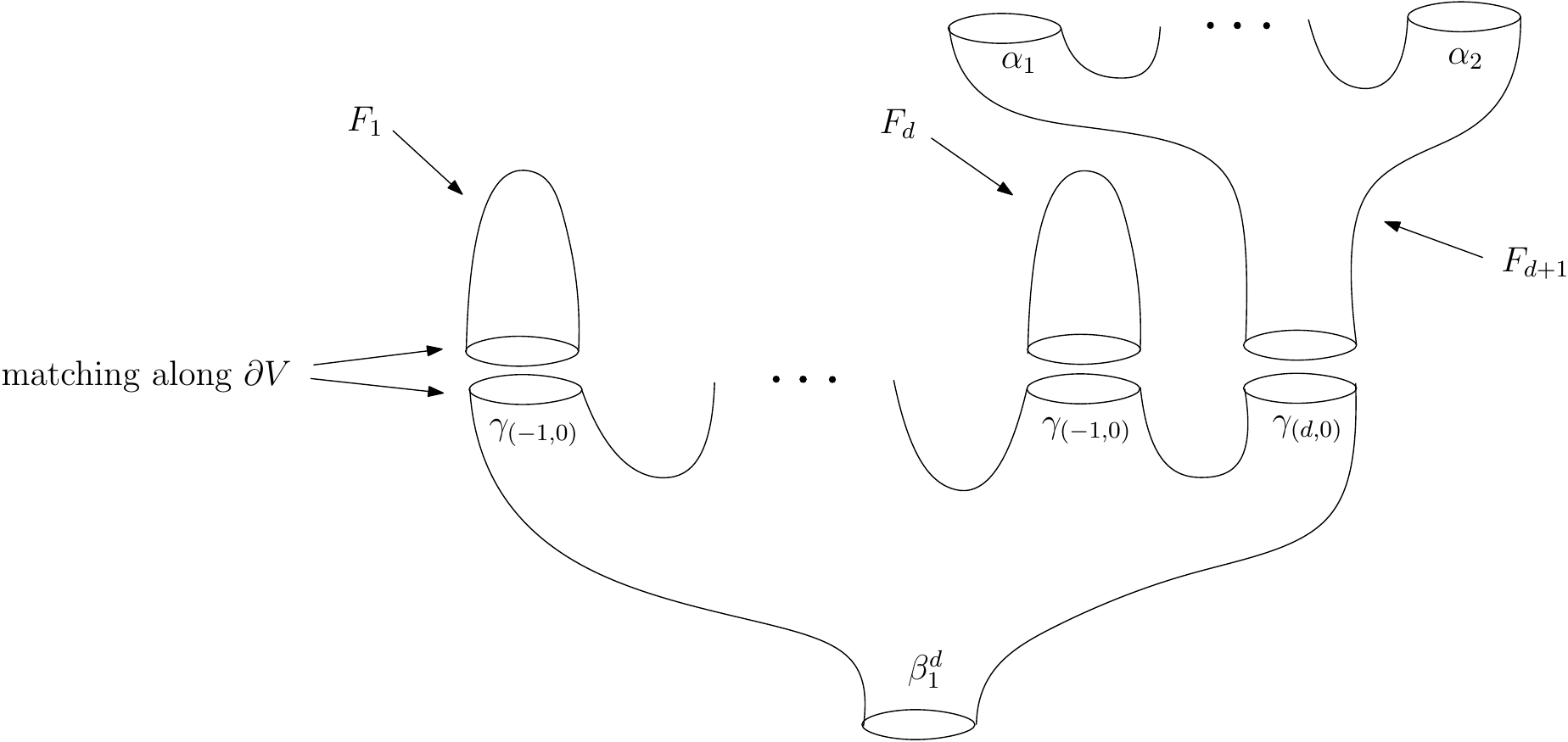}
\caption{One possible configuration of $C_{\rm lim}$}\label{figure-limit-conf-1}
\end{figure}
Since ${\rm Area}(F_{d+1}) >0$ and $d= 2m+k+1$, we have $ma + b \geq  2m + k+1$. This is equivalent to the following inequality
\[ a + \frac{b}{m} \geq 2 + \frac{k+1}{m}. \]
Repeating the argument with $m \to \infty$, we get $a \geq 2$. 

\medskip

\noindent Case II. Suppose there exists some $i \in \{1, …, d+1\}$ such that $\gamma_{(-k_i, -l_i)}$ has $l_i <0$. Since $l_i<0$, that is, $l_i \leq -1$, we get $-l_i(x-1) -1 \geq x-2$. Hence, by equation (\ref{area-Fi-3}) we have $m_i(a-2) + \kappa(b - (k+2)) \geq x-2$. Then, either $a > 2$ or $\kappa(b-(k+2)) \geq x-2$. The condition $x>2$ implies that $\kappa =1$ and thus 
\[ b - \left(\frac{b}{a}+2 \right) \geq x-2 \,\,\,\,\mbox{which implies that} \,\,\,\,b\left(1- \frac{1}{a} \right) \geq x.\]
Thus we complete the proof. \end{proof}

\subsection{Existence}\label{existence}
For the construction part of Theorem \ref{thm-2} we show the following, using coordinates $(z,w)$ on $\C^2$.

\begin{theorem}\label{constr} For all $x \ge 1$ there exists an embedded Lagrangian torus $L(1,x)$ in an arbitrary neighborhood of $E(2,4) \cap \{ \pi |w|^2 <2 \}$.
\end{theorem}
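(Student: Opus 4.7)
The proof splits into the monotone case $x=1$ and the non-monotone case $x \ge 2$. For $x=1$, the standard product torus $L_H(1,1) = \{\pi|z|^2 = 1,\ \pi|w|^2 = 1\}$ lies inside $E(2,4)\cap\{\pi|w|^2<2\}$, since $2\cdot 1+1=3<4$ and $1<2$; its natural basis is Maslov $(2,2)$ with area classes $(1,1)$, so it is already an $L(1,1)$ embedding contained in every prescribed neighborhood of the target region.

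The substantive case $x \ge 2$ requires a folding. I would produce a compactly supported Hamiltonian diffeomorphism $\Phi$ of $\C^2$ such that $\Phi(L_H(1,x))$ is contained in the given neighborhood $U$ of $E(2,4)\cap\{\pi|w|^2<2\}$. Since $\Phi$ is Hamiltonian, the image is an embedded Lagrangian with the same Maslov class and area classes as $L_H(1,x)$, so automatically an $L(1,x)$. The construction of $\Phi$ is a variant of Schlenk's symplectic folding adapted to the Lagrangian case and parallels the ball-case construction in Section~4 of \cite{HO19}. In moment coordinates $(u_1,u_2) = (\pi|z|^2,\pi|w|^2)$, the target is the trapezoid $\{2u_1+u_2<4,\ 0<u_2<2\}$, whose $u_1$-width is $2-u_2/2>1$ for every $u_2<2$, leaving room to stack folded layers in the $z$-direction. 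Concretely, I would pick $c$ slightly less than $2$ and $\eta>0$ small, decompose $[c,x+\eta]$ into $N=\lceil(x-c)/(2-c)\rceil$ subintervals $[c_k,c_{k+1}]$ of length just under $2-c$, and for each $k$ construct a local Hamiltonian supported near the annulus $\{\pi|w|^2\in[c_k,c_{k+1}]\}$ that folds this annulus back into the slab $\{\pi|w|^2<2\}$ while simultaneously shifting the $z$-coordinate to a small patch centered at $(u_1^{(k)},\theta_1^{(k)})$ inside the corresponding ellipsoid slice $\{\pi|z|^2<2-\pi|w|^2/2\}$. Composing these local Hamiltonians layer by layer and smoothing the joins yields $\Phi$; the original torus $L_H(1,x)$ is thus cut into $N$ angular arcs which the folding redistributes as staggered copies sitting inside the slab.

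The main obstacle is verifying that the $N$ staggered copies are pairwise disjoint in $\C^2$ while remaining inside $U$. The $u_1$-budget of the slab slice is only $2-u_2/2\in(1,2]$ and each original copy has $u_1$-extent approximately $1$, so transverse overlap is only avoided by choosing the $\theta_1^{(k)}$ at distinct angles in the $z$-plane and letting each layer be arbitrarily thin in the $u_1$-direction; this is the essential place where the integrality $b/a\ge 2$ enters and where the availability of multiple angular positions allows $N$ to be taken as large as needed for arbitrary $x$. The openness of $U$ absorbs the minor excursions required in the turning regions of the folding Hamiltonian, and the Hamiltonian character of $\Phi$ guarantees that the area classes $(1,x)$ and Maslov $(2,2)$ basis are preserved under the fold. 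The detailed quantitative packing of the layers, and smoothing of the joins so that $\Phi$ remains a diffeomorphism, is the bulk of the technical work, but is modelled directly on the analogous folding argument of \cite{HO19}.
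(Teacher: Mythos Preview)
Your proposal has a genuine gap in the case $x \ge 2$. The Schlenk-type folding you describe cannot be made to fit inside a neighborhood of $E(2,4)\cap\{\pi|w|^2<2\}$. Each fold layer still carries the full circle $\partial D(1)$ in the $z$-plane: on $L_H(1,x)$ one has $\pi|z|^2=1$ identically, so every layer occupies all $z$-angles and sits at $u_1=1$ exactly, not ``arbitrarily thin in the $u_1$-direction.'' Your suggestion to separate layers by ``distinct angles $\theta_1^{(k)}$'' therefore cannot work. To make $N\sim x/2$ folded layers pairwise disjoint one needs $N$ Hamiltonian-displaced copies of a circle bounding area $1$, which requires $z$-area of order $N$; only area $2$ is available. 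In short, repeated folding in the $w$-direction consumes $z$-area linearly in $x$, so the construction breaks as soon as $x$ is large.

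The paper's construction avoids this by not starting from the product torus. Instead one takes an \emph{immersed} loop $\gamma$ in the $w$-plane that winds many times around the boundary of a unit square, and forms the immersed Lagrangian $\partial D(1)\times\gamma$. The crucial feature is that all self-intersections of $\gamma$, and hence of the product, are concentrated near a \emph{single} point of the $w$-plane. Resolving them then requires only one displacement of the $z$-fiber from $D(1)$ into $D(2)$, independently of $x$. The proof then tracks how far the $z$-fiber has been pushed as a function of the $w$-coordinate and checks that the result lies in $E(2,4)\cap\{\pi|w|^2<2\}$. This concentration of intersections is the idea missing from your sketch; without it the $z$-budget of $2$ is insufficient.
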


Note this is clear if $x \le 2$ by inclusion of the product, so the interest here is $x >2$.

\begin{cor} \label{cor-emb} Suppose $a > 2$ and $\frac{1}{a} + \frac{2}{b} \le 1$, then $L(1,x) \hookrightarrow E(a,b)$ for all $x \ge 1$.
\end{cor}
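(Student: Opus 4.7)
The plan is to deduce Corollary \ref{cor-emb} directly from Theorem \ref{constr} by checking that $E(a,b)$ itself qualifies as an open neighborhood of $E(2,4) \cap \{\pi|w|^2 < 2\}$ under the stated hypotheses, at which point the construction already supplies the desired tori.

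First I would verify the set-theoretic inclusion $E(2,4) \cap \{\pi|w|^2 < 2\} \subset E(a,b)$. Passing to moment-map coordinates $u_1 = \pi|z|^2$, $u_2 = \pi|w|^2$, the source corresponds to the open trapezoid
\[
T = \{(u_1,u_2) \,:\, u_1, u_2 \ge 0,\; u_1/2 + u_2/4 < 1,\; u_2 < 2\},
\]
while $E(a,b)$ corresponds to $\{u_1, u_2 \ge 0,\; u_1/a + u_2/b < 1\}$. The linear functional $\ell(u_1, u_2) = u_1/a + u_2/b$ attains its maximum on $\overline{T}$ at one of the four vertices $(0,0)$, $(2,0)$, $(1,2)$, $(0,2)$, with values $0$, $2/a$, $1/a + 2/b$, $2/b$ respectively. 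Under the hypotheses $a > 2$ and $1/a + 2/b \le 1$ (which, via $1/a > 0$, also forces $2/b < 1$), each of these values is at most $1$, and equality can hold only at the vertex $(1,2)$. Since $(1,2) \notin T$, we conclude $\ell < 1$ throughout the open trapezoid $T$, giving the claimed inclusion.

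With the inclusion established, $E(a,b)$ is an open subset of $\C^2$ containing $E(2,4) \cap \{\pi|w|^2 < 2\}$, hence is an open neighborhood of the latter. Invoking Theorem \ref{constr} with this particular neighborhood produces, for each $x \ge 1$, an embedded Lagrangian torus $L(1,x)$ inside $E(a,b)$, which is the desired conclusion.

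There is no real obstacle here; the substance lies entirely in Theorem \ref{constr}, which I would prove separately by an explicit symplectic folding construction. The only subtlety worth flagging is the boundary case $1/a + 2/b = 1$, where the closure of the source touches $\partial E(a,b)$ at the product torus $\{u_1 = 1, u_2 = 2\}$; thus one relies on interpreting \emph{neighborhood} as an open set containing the source, rather than its closure, so that $E(a,b)$ itself counts as an admissible neighborhood in Theorem \ref{constr}.
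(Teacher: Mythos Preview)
Your proposal is correct and follows essentially the same approach as the paper: reduce to the inclusion $E(2,4) \cap \{\pi|w|^2 < 2\} \subset E(a,b)$ and then invoke Theorem \ref{constr}. The only difference is cosmetic --- the paper verifies the inclusion by a direct one-line estimate on $\pi|z|^2/a + \pi|w|^2/b$, while you check the linear functional at the four vertices of the moment trapezoid; both arguments are equally elementary and your remark on the boundary case $1/a + 2/b = 1$ is a fair clarification.
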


The second hypothesis is equivalent to saying $L(1,2)$ sits in $E(a,b)$ by inclusion.

\begin{proof} [Proof of Corollary \ref{cor-emb}] 
By the Theorem \ref{constr} it suffices to show $$E(2,4) \cap \{ \pi |w|^2 <2 \} \subset E(a,b).$$
For this, given $(z,w) \in E(2,4) \cap \{ \pi |w|^2 <2 \}$ we compute
$$ \frac{ \pi|z|^2}{a} + \frac{  \pi|w|^2}{b} \le \frac{ \pi|w|^2}{b} + \frac{ 2 ( 1 - \pi|w|^2/4)}{a}=\frac{2}{a} +  \pi|w|^2\left(\frac{1}{b} - \frac{1}{2a}\right).$$
Since $a > 2$ and $\pi |w|^2 <2$ this is bounded by $1$ as required.
\end{proof}

\begin{proof} [Proof of Theorem \ref{constr}] We adjust the construction in \cite{HO19}. This gives an $L(1,x) \hookrightarrow D(2) \times \Omega$ (or a neighborhood of this) where $\Omega$ is a subset of the $w$ plane in coordinates $(s,t)$, roughly $[0,2] \times [0,1] \setminus \{(1,t) | \ep \le t  \le 1\}$, see Figure 1 in \cite{HO19}.

The $L(1,x)$ is formed by first taking the product of an immersed loop $\gamma$ in the $w$ plane, which winds a large number of times around $\partial ([0,1] \times [0,1])$, with $\partial D(1)$ in the $z$ plane. The product gives an immersed Lagrangian which can be arranged to have the Maslov and area class of $L(1,x)$. All of the self intersections lie close to $\partial D(1) \times \{(1,0)\}$ and these are removed by applying a Hamiltonian diffeomorphism to the subsets $\partial D(1) \times H$ and $\partial D(1) \times V$ where $H$ is the uppermost branch of $\gamma$ close to $t=0$ and $V$ is the rightmost branch close to $s=1$. The diffeomorphisms are generated by functions $\chi(s)G(z)$ and $\chi(1-t)G(z)$ respectively, where $\chi$ is a function increasing from $0$ to $1$ on $[0,1]$ with slope bounded approximately by $1$ and $G$ displaces $D(1)$ inside $D(2)$ and has $0 \le G \le 1$.

The result of this is that points in $\partial D(1) \times H$ flow in the positive $t$ direction, and points in $\partial D(1) \times V$ flow in the positive $s$ direction, and we get an embedded Lagrangian in $D(2) \times \Omega$. Moreover, the fibers over a point $(s,t)$ in the $w$ plane lie in a disk $D(a)$ where $a = 1 + \chi(s) = 1+s$ when $s <1$ and $a=1+ \chi(t) = 2-t$ when $s>1$. This is because the $z$ component of the flow is $\chi X_G$.

We conclude by applying a symplectomorphism mapping $\Omega \to D(2)$. This can be chosen such that $\{s <1\}$ maps into the upper half $U$ of the disk and $\{s>1\}$ maps to the lower half $L$. Moreover, we can arrange that $\{[s,1] \times [0,1]\} \to D(2(1-s)) \cap U$ and $\{[1,2] \times [0,t]\} \to D(2t) \cap L$. We take the product of this map with the identity in the $z$ plane and compute the fibers over points $w \in D(2)$.

If $w \in U$ and $\pi|w|^2=\rho$ then $w$ lies in the image of a point $(s,t) \in \Omega$ with $s \le 1-\rho/2$ and thus the fiber lies in $D(2-\rho/2)$. Similarly, if $w \in L$ and $\pi|w|^2=\rho$ then $w$ lies in the image of a point $(s,t) \in \Omega$ with $t \ge \rho/2$ and thus the fiber lies in $D(2-\rho/2)$. Hence, the image lies close to $$\left\{ (z,w) \in \C^2 \bigg| \, \pi |z|^2 \le 2 - \frac{ \pi |w|^2}{2} \right\} = E(2,4).$$ Thus we complete the proof. \end{proof}

Proposition 6.3 in \cite{HO19} goes on to show that in fact the constructed tori $L(1,x)$ can be taken to be Hamiltonian isotopic to the product tori $L_H(1,x)$. This justifies the equality (\ref{ham=red}) in subsection \ref{hamshape}.

\section{Applications} \label{sec-app}

We start this section by proving the following symplectic invariant property of the reduced shape invariant ${\rm Sh}^+$ of star-shaped domains of $\C^2$. 

\begin{prop} \label{prop-ham-inv} Let $X, Y$ be a star-shaped domains of $\C^2$. If there exists a symplectic embedding $\phi: X \to Y$, then ${\rm Sh}^+(X)\subset {\rm Sh}^+(Y)$. In particular, ${\rm Sh}^+$ is a symplectic invariant of star-shaped domains of $\C^2$, i.e., ${\rm Sh}^+(X) = {\rm Sh}^+(Y)$ if $X$ and $Y$ are symplectomorphic.  \end{prop}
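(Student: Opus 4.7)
The plan is to show that a symplectic embedding $\phi\co X \to Y$ sends every Lagrangian torus $L \subset X$ to a Lagrangian torus $\phi(L) \subset Y$, while preserving both the area class $\Omega$ and the Maslov class $\mu$ under the pushforward $\phi_*\co H_1(L,\Z) \to H_1(\phi(L),\Z)$. Once this is established, the integral basis $(e_1,e_2)$ singled out for $L$ by Lemma \ref{lemma-1} transports to a basis $(\phi_* e_1, \phi_* e_2)$ for $\phi(L)$ with the same Maslov values (both equal to $2$) and the same area values, and hence satisfies the normalization of Lemma \ref{lemma-1} for $\phi(L)$. Therefore $(\Omega(e_1),\Omega(e_2)) \in {\rm Sh}^+(Y)$, giving the desired inclusion ${\rm Sh}^+(X) \subset {\rm Sh}^+(Y)$.

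For the area class, observe that $d(\phi^*\lambda_{\rm std}) = \phi^*\omega_{\rm std} = \omega_{\rm std} = d\lambda_{\rm std}$ on $X$, so $\phi^*\lambda_{\rm std} - \lambda_{\rm std}$ is closed. Because $X$ is star-shaped it is contractible, so $H^1(X,\R) = 0$ and this form is exact, say $dH$. Restricting $H$ to $L$ shows that $[\phi^*\lambda_{\rm std}|_L] = [\lambda_{\rm std}|_L]$ in $H^1(L,\R)$, which is precisely $\Omega(\phi_* e) = \Omega(e)$ for every $e \in H_1(L,\Z)$.

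For the Maslov class, recall that $\mu$ is the pullback of the universal Maslov class on the Lagrangian Grassmannian $\Lambda(2)$ under the Gauss map $\tau_L\co L \to \Lambda(2)$, $p \mapsto T_p L$, where tangent spaces of $\C^2$ are identified with $\C^2$ via the canonical trivialization. The analogous Gauss map for $\phi(L)$, precomposed with $\phi$, is the map $p \mapsto d\phi_p(T_p L)$. Since $X$ is contractible the map $d\phi\co X \to Sp(4,\R)$ is null-homotopic, and since $Sp(4,\R)$ is path-connected the constant to which $d\phi$ retracts can be further connected to the identity; concatenating yields a homotopy $A_s\co X \to Sp(4,\R)$ with $A_0 \equiv I$ and $A_1 = d\phi$. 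Letting $A_s$ act pointwise on $\tau_L$ via the natural $Sp(4,\R)$-action on $\Lambda(2)$ produces a homotopy in the space of maps $L \to \Lambda(2)$ between $\tau_L$ and $\tau_{\phi(L)}\circ \phi$, so they pull back the universal Maslov class to the same element of $H^1(L,\Z)$, i.e., $\mu(\phi_* e) = \mu(e)$.

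The second assertion then follows by applying the established inclusion to both $\phi$ and $\phi^{-1}$. The main obstacle is the Maslov invariance step, where contractibility of the star-shaped domain $X$ is essential in order to null-homotope $d\phi$ inside the connected group $Sp(4,\R)$; this is the reason the star-shaped hypothesis is the natural one here, rather than merely requiring $L$ to lie in an ambient symplectic manifold.
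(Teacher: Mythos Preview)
Your argument is correct, and it differs from the paper's in a meaningful way. The paper first invokes an extension theorem (Proposition~A.1 in Schlenk's book) to extend $\phi$ to a compactly supported Hamiltonian diffeomorphism $\Phi$ of $\C^2$ with $\Phi(X)\subset Y$; once this is done, Maslov invariance is immediate because $\Phi$ is isotopic to the identity through symplectomorphisms, and area invariance follows from the explicit Cartan-type formula $\Phi^*\lambda_{\rm std}-\lambda_{\rm std}=dF$ with $F=\int_0^1 \Phi_t^*(H_t+\iota_{X_t}\lambda_{\rm std})\,dt$. You instead work with $\phi$ itself and use the contractibility of the star-shaped domain $X$ twice: first to kill $H^1(X,\R)$ and make $\phi^*\lambda_{\rm std}-\lambda_{\rm std}$ exact, and second to null-homotope the Gauss-type map $d\phi\co X\to Sp(4,\R)$, which forces the two Lagrangian Gauss maps to be homotopic and hence pull back the universal Maslov class identically. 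Your route is more self-contained in that it avoids the black-box extension result, while the paper's route makes the Maslov step essentially trivial once the Hamiltonian isotopy is in hand. Both arguments ultimately exploit the star-shaped hypothesis in the same spirit---to produce a homotopy of $d\phi$ to the identity---but yours does so locally on $X$ whereas the paper globalizes first.
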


\begin{proof} The second conclusion immediately follows from the first conclusion, so it suffices to prove the first one. Let $\phi: X \to Y$ be a symplectic embedding, then Proposition A.1 in \cite{Sch-book} implies that $\phi$ can be extended to be a compactly support Hamiltonian diffeomorphism $\Phi$ on $\C^2$ such that $\Phi(X) \subset Y$. If $(w_1, w_2) \in {\rm Sh}^+(X)$, by definition (\ref{dfn-red-si}), there exists an embedded Lagrangian torus $L$ in $X$ such that $\Omega(e_1) = w_1$ and $\Omega(e_2) = w_2$, where $(e_1, e_2)$ is the ordered basis of $H_1(L; \Z)$ provided by Lemma \ref{lemma-1}. Then $\Phi(L)$ is an embedded Lagrangian torus in $Y$ and $\{\Phi_*(e_1), \Phi_*(e_2)\}$ is an integral basis of $H_1(\Phi(L); \Z)$. On the one hand, we have 
\begin{equation} \label{inv-maslov}
\mu(\Phi_*(e_1)) = \mu(\Phi_*(e_2)) = 2
\end{equation}
since $\Phi_*$ is isotopic to the identity. On the other hand, 
\begin{equation} \label{inv-areaclass}
\Omega(\Phi_*(e_1)) = \Omega(e_1) \,\,\,\,\mbox{and}\,\,\,\, \Omega(\Phi_*(e_2)) = \Omega(e_2). 
\end{equation}
In fact, assume $\Phi = (\Phi_t)_{t \in [0,1]}$, then 
\begin{align*}
\Phi^*\lambda_{\rm std} - \lambda_{\rm std}  = \int_0^1 \Phi_t^*(d(H_t + \iota_{X_t} \lambda_{\rm std})) dt = d \left(\int_0^1 \Phi_t^*(H_t + \iota_{X_t} \lambda_{\rm std}) dt \right). 
\end{align*}
In other words, set $F = \int_0^1 \phi_t^*(H_t + \iota_{X_t} \lambda_{\rm std}) dt$, then $\Phi^*\lambda_{\rm std} - \lambda_{\rm std} = dF$. Hence, for $e_1$ (and similarly to $e_2)$, we have 
\[ \Omega(\Phi_*(e_1)) - \Omega(e_1) = [\Phi^*\lambda_{\rm std} - \lambda_{\rm std}](e_1) = [dF](e_1) = 0. \]
Finally, (\ref{inv-maslov}) and (\ref{inv-areaclass}) imply that $(\Phi_*(e_1), \Phi_*(e_2))$ is the basis of $H_1(\Phi(L); \Z)$ provided by Lemma \ref{lemma-1}. Indeed, if $L$ is monotone, then $\Phi(L)$ is monotone; if $L$ is non-monotone, then $\Omega(\Phi_*(e_2)) = \Omega(e_2) \geq 2 \Omega(e_1) = 2 \Omega(\Phi_*(e_1))>0$. Therefore, $(w_1, w_2) \in {\rm Sh}^+(Y)$ and thus we complete the proof. \end{proof}

For any domain $X$ in $\C^2$, by Darboux's Theorem in symplectic geometry, its reduced shape invariant ${\rm Sh}^+(X)$ always contains a ball neighborhood of $0$ (without the point $0$) that restricted in the region $\{w_1>0, w_2 \geq 2 w_1\} \cup\{w_1>0, w_1 = w_2\}$. Then it makes sense to discuss the radial rescaling of ${\rm Sh}^+(X)$ with respect to $0$. In fact, it is readily to check that ${\rm Sh}^+(\lambda X) = \lambda {\rm Sh}^+(X)$. In this way, a quantitative improvement of Proposition \ref{prop-ham-inv} is a stability that relates a quantitative measurement between domains $X$ and $Y$ (say, symplectic Banach-Mazur distance in \cite{Ush19,SZ18}) and a 2-dimensional quantitative inclusions between the associated ${\rm Sh}^+$. Researches in this direction have been explored in the upcoming work \cite{HZip}. 

\begin{ex} \label{ex-hsi} This example collects the reduced shape invariants of several basic toric domains in $\C^2$, that is, symplectic cylinder $Z(R) := E(R, \infty)$, ball $B(R):= E(R, R)$, polydisk with $a \leq b$, and ellipsoid $E(a,b)$ with $\frac{b}{a} \in \N_{\geq 2}$. They are illustrated in the the shade dotted areas. In particular, for polydisk $P(a,b)$ and its reduced shape invariant ${\rm Sh}(P(a,b))$, whether the point $(a,b)$ lies above the line $2w_1 = w_2$ or below depends on whether $\frac{b}{a} \geq 2$ or not. These pictures are respectively due to the main result in \cite{Che96}, Theorem 1 in \cite{HO19},  Theorem 3 in \cite{HO19}, and Theorem \ref{thm-1} (or Theorem \ref{thm-2}) in this paper. 

\[
\includegraphics[width=6.3cm]{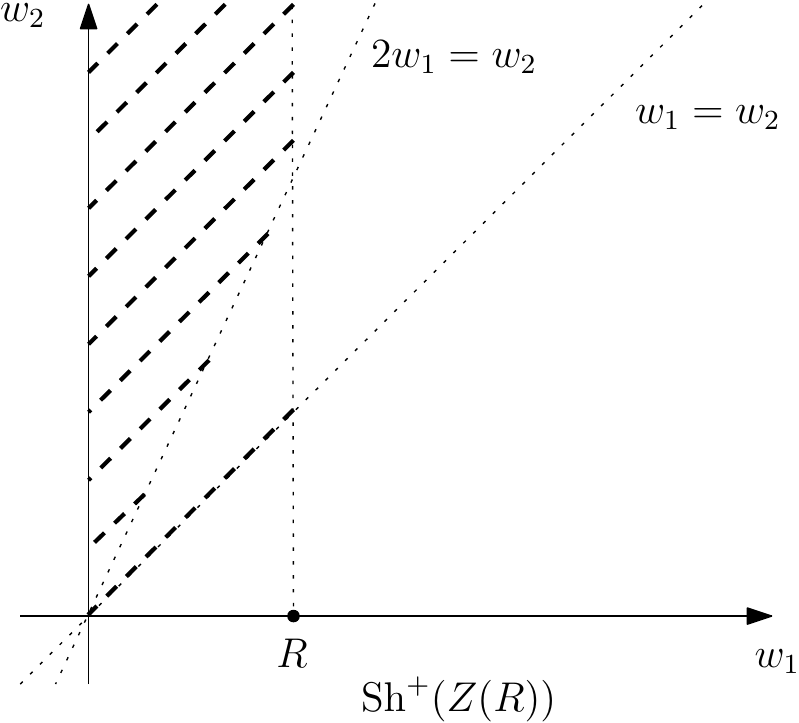} \,\,\,\,
\includegraphics[width=6.5cm]{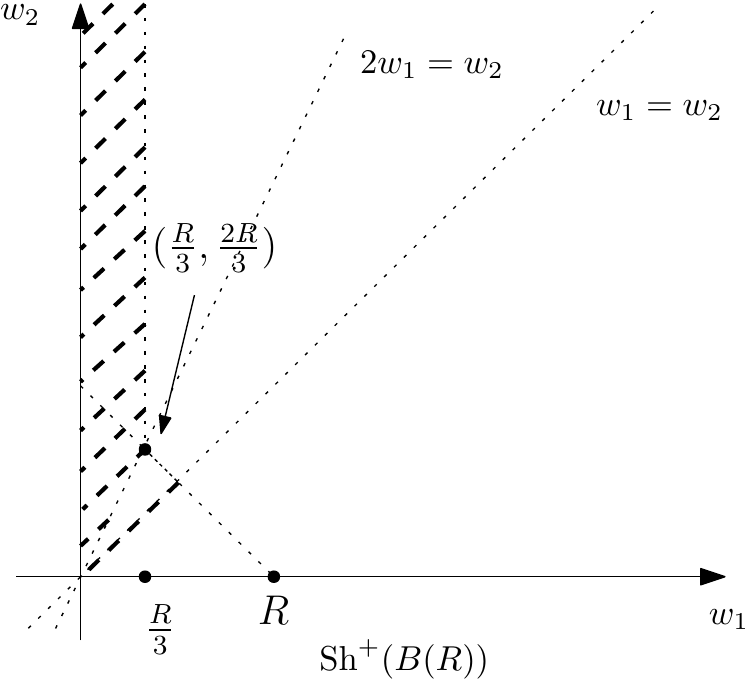}
\]
\[
\includegraphics[width=6.3cm]{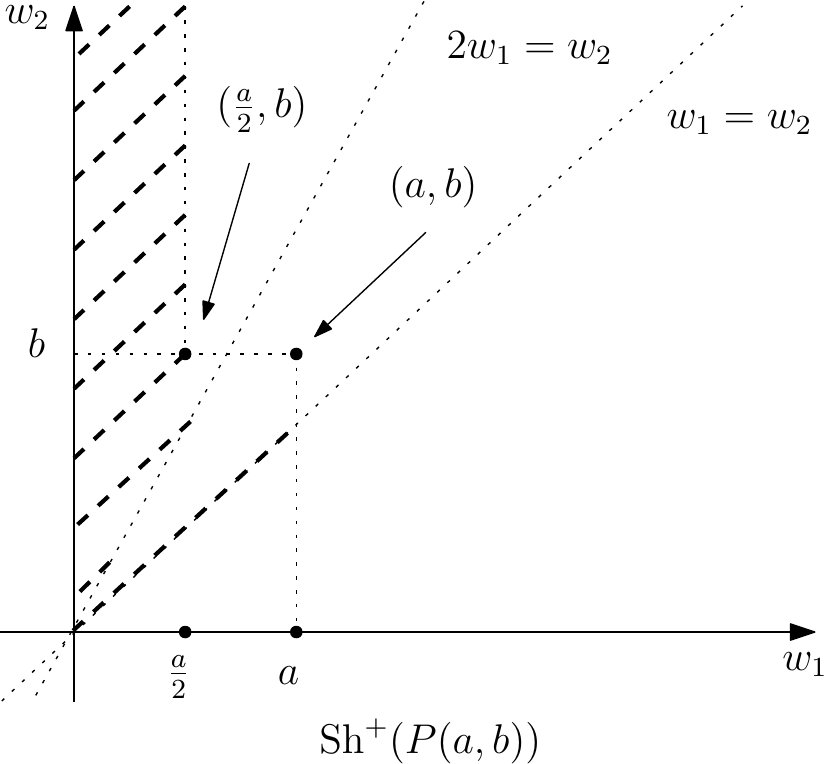} \,\,\,\,
\includegraphics[width=6.3cm]{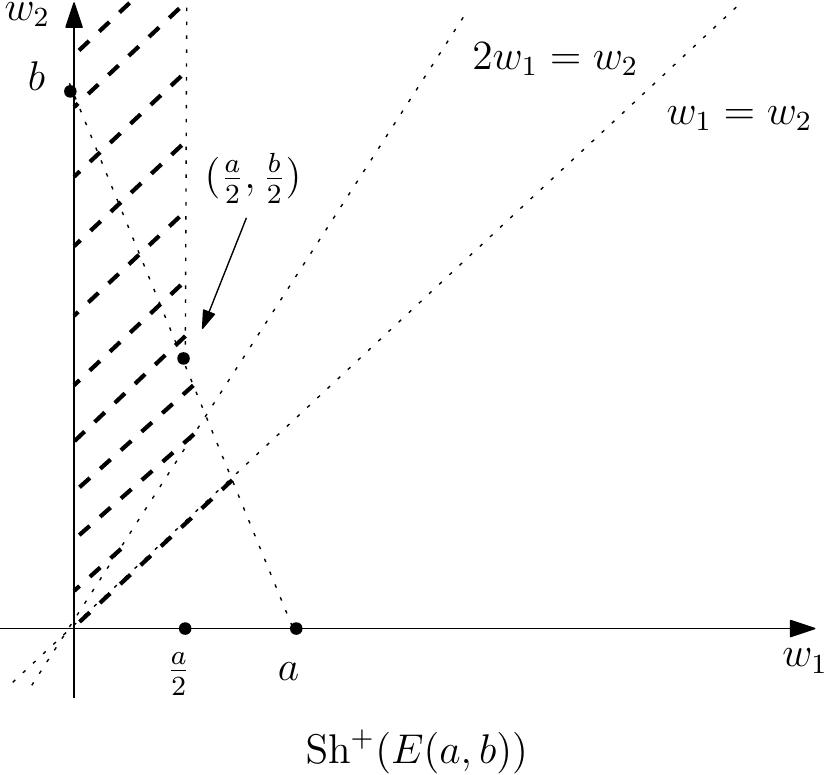}
\]
\end{ex}

Next, let us give the proof of Theorem \ref{thm-3}. 

\begin{proof} [Proof of Theorem \ref{thm-3}] (1) If $P(a,b) \hookrightarrow P(c,d)$, then Proposition \ref{prop-ham-inv} implies that 
\begin{equation} \label{hsi-inc-1}
{\rm Sh}^+(P(a,b)) \subset {\rm Sh}^+(P(c,d)).
\end{equation}
It suffices to consider the case when $\frac{b}{a} \geq 2$. Indeed, if $\frac{b}{a} <2$, then the turning point $(\frac{c}{2}, d)$ of ${\rm Sh}^+(P(c,d))$ must lie in ${\rm Sh}^+(P(a,b))$, which contradicts (\ref{hsi-inc-1}). For $\frac{b}{a} \geq 2$, the turning point $(a,b)$ of ${\rm Sh}^+(P(a,b))$ lies on and above the line $2 w_1 = w_2$. Then the desired conclusion can be obtained via a straightforward comparison between the reduced shape invariants ${\rm Sh}^+(P(a,b))$ and ${\rm Sh}^+(P(c,d))$ shown in Example \ref{ex-hsi}. Figure \ref{Figure-polydisk-polydisk} illustrates this comparison. 
\begin{figure}[h] 
\includegraphics[width=6.7cm]{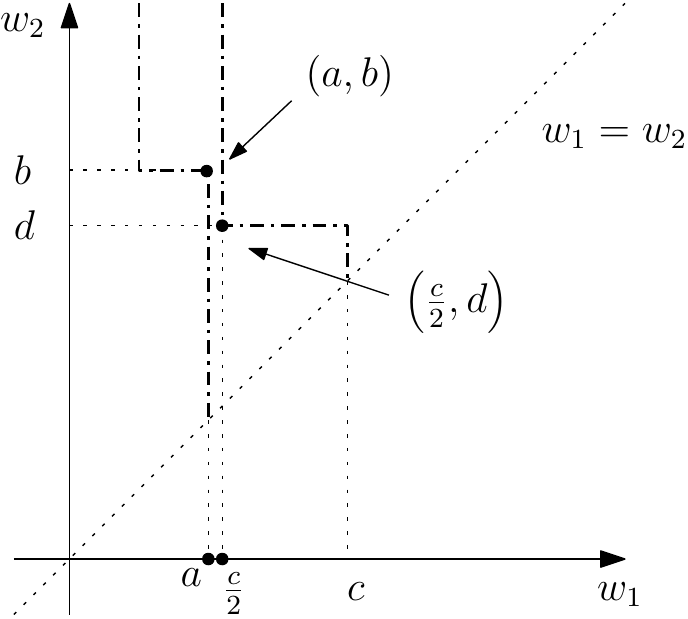} 
\caption{Obstruction of polydisk embedding via ${\rm Sh}^+$.}\label{Figure-polydisk-polydisk}
\end{figure}
In Figure \ref{Figure-polydisk-polydisk}, if $\frac{c}{a} <2$, that is, $\frac{c}{2} <a$, then there exists some point, say,
\[ \left(\frac{\frac{c}{2} + a}{2}, \frac{b+d}{2} \right) \in {\rm Sh}^+(P(a,b)) \backslash {\rm Sh}^+(P(c,d)). \]
Contradiction. Thus we get the conclusion.

\medskip

(2) If $a + b\leq bc$, it is easy to verify that $E(c,bc)$ embeds into $P(1,a)$ by the trivial inclusion. On the other hand, for ellipsoid $E(c, bc)$ with $b \in \N_{\geq 2}$, Theorem \ref{thm-1} applies. If $P(1,a) \hookrightarrow E(c, bc)$ with $1\leq c \leq 2$, then Proposition \ref{prop-ham-inv} implies that 
\begin{equation} \label{hsi-inc-2}
{\rm Sh}^+(P(1,a)) \subset {\rm Sh}^+(E(c,bc))).
\end{equation}
The desired conclusion comes from a comparison between the reduced shape invariants ${\rm Sh}^+(P(1,a))$ and ${\rm Sh}^+((E(c,bc))$ shown in Example \ref{ex-hsi}. Figure \ref{Figure-polydisk-ellipsoid} illustrates this comparison. 
\begin{figure}[h] 
\includegraphics[width=7.5cm]{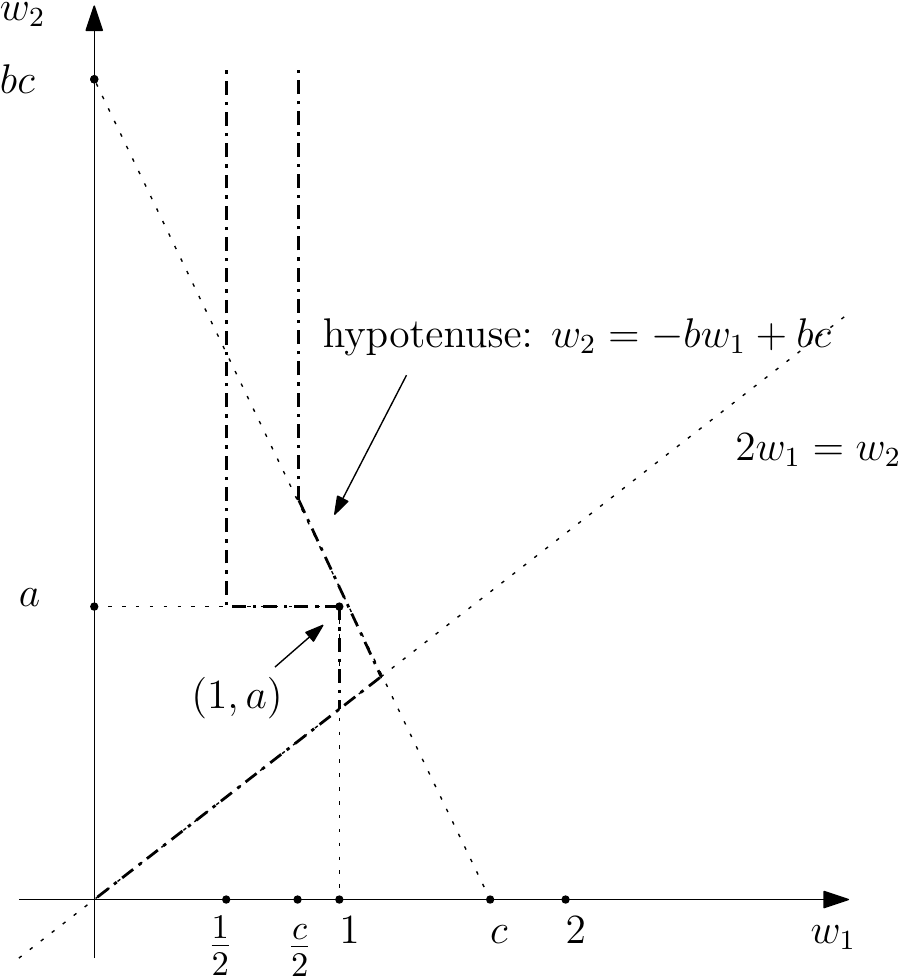} 
\caption{Obstruction of embedding from polydisk to ellipsoid via ${\rm Sh}^+$.}\label{Figure-polydisk-ellipsoid}
\end{figure}
In Figure \ref{Figure-polydisk-ellipsoid}, in order to have the resulting inclusion as in (\ref{hsi-inc-2}), the point $(1,a) \in {\rm Sh}^+(P(1,a))$ must lie {\it on or below} the hypotenuse $w_2 = -bw_1 + bc$. In other words, 
\[ a \leq -b + bc, \,\,\,\,\mbox{which is $a+b \leq bc$ as required}. \]
Thus we complete the proof. 
\end{proof}

\begin{remark} The Figure \ref{Figure-polydisk-ellipsoid}  shows that if $c \geq 2$, that is, $\frac{c}{2} \geq 1$, then the inclusion (\ref{hsi-inc-2}) always holds. This implies that comparing the reduced shape invariants will not result in any effective obstructions. \end{remark}

\section{Proof of Theorem \ref{conj-1}} \label{conjsec}

Here we describe how a similar argument to the proof of Theorem \ref{thm-2} can also be used to establish Theorem \ref{conj-1}, except that in this case we need new methods developed by Siegel in \cite{Sie20,Sieip} to show that the key moduli space is nonempty.

\medskip

Given Theorem \ref{thm-2} and the comments in section \ref{hamshape}, Theorem  \ref{conj-1} follows from the following.

\begin{theorem}\label{hamemb} Let $1<x<2$ and $k = \frac{b}{a} \in \N_{\geq 2}$. Then $$L(1,x) \hookrightarrow E(a,b)$$ if and only if $x < b(1-\frac{1}{a})$.
\end{theorem}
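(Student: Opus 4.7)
The ``if'' direction is immediate: $x < b(1 - 1/a)$ is equivalent to $\frac{1}{a} + \frac{x}{b} < 1$, so the product torus $L_H(1,x)$ lies inside the open ellipsoid $E(a,b)$, giving the embedding by the identity map. The content is the ``only if'' direction.

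For the obstruction, the plan is to reproduce the neck-stretching/SFT scheme of subsection \ref{ssec-obs}, replacing the input moduli space of Theorem \ref{thm-gluing} by one appropriate to the regime $1 < x < 2$. After perturbing so that $b = ka + \delta$ with $\delta > 0$ irrational, I would embed a thin ellipsoid $E(\epsilon, \epsilon S)$ inside a Weinstein neighborhood $V \cong U^*_g \mathbb{T}^2$ of the image of $L_H(1,x)$ and neck-stretch along $\partial V$. The key new input is a nonempty moduli space of rigid somewhere-injective $J$-holomorphic curves in the cobordism $\overline{X} = E(a,b) \setminus E(\epsilon, \epsilon S)$ whose asymptotic combinatorics force, in the limit building, the component carrying the $\alpha_2$ positive end to have its negative end at $\gamma_{(-k_i, -l_i)}$ with $l_i \leq -1$. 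Theorem \ref{thm-gluing} was tailored to $x \geq 2$ and does not control this winding asymmetry; this is the step where Siegel's higher-capacity machinery from \cite{Sie20, Sieip} is invoked.

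Granting nonemptiness of such a moduli space, the remainder of the argument transcribes subsection \ref{ssec-obs}. Theorem \ref{thm-compact} (or its natural analogue) yields compactness; the limit building decomposes as $F_0 \cup \bigcup_{i=1}^{d+1} F_i$ with $\mathrm{ind}(F_0) = 0$ and $\mathrm{ind}(F_i) = 1$ via Lemma \ref{lemma-index}; and each $F_i$, with $m_i$ simple $\alpha_1$-ends, $\kappa_i \in \{0,1\}$ copies of $\alpha_2$, and negative end $\gamma_{(-k_i,-l_i)}$, satisfies
\[
m_i(a-2) + \kappa_i(b - (k+2)) \geq -1 - l_i(x-1),
\qquad 2m_i + \kappa_i(k+2) + k_i + l_i = 1.
\]
As in subsection \ref{ssec-obs}, either all $l_i = 0$ (Case~I, which on sending the $\alpha_1$-multiplicity to infinity forces $a \geq 2$; then $b \geq 2a \geq 4 > 2x$ gives $x < b(1-1/a)$ automatically) or some $l_i \leq -1$ (Case~II).

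The main obstacle is both producing the input moduli space and verifying that in Case~II the component with $l_i \leq -1$ is precisely the one carrying $\kappa_i = 1$: evaluating the area inequality on that component with $m_i = 0$ then yields $b - (k+2) \geq x - 2$, equivalently $b(1 - 1/a) \geq x$, with strict inequality following from openness of $E(a,b)$. Without this control over which component carries the $\alpha_2$-end, the other components (with $\kappa_j = 0$) yield only inequalities not involving $b$, and the sharp bound cannot be extracted. Once Siegel's construction supplies the needed curves, the rest of the proof is a near-verbatim transcription of the Case~I/Case~II dichotomy of subsection \ref{ssec-obs}.
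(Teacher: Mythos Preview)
Your overall framework is right—neck-stretching, decomposing the limit into $F_0$ and $F_1,\dots,F_{d+1}$, and a Case~I/Case~II split on the sign of the $l_i$—but you misidentify the new moduli space, and this leaves a genuine gap in Case~II.

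You keep the mixed asymptotics from Theorem~\ref{thm-gluing} (many $\alpha_1$, one $\alpha_2$) and hope Siegel's input will force the lone $\alpha_2$-component to land in Case~II with $m_i=0$. No such mechanism is provided, and with mixed ends your Case~II inequality $m_i(a-2)+\kappa_i(b-(k+2))\ge x-2$ is vacuous for $1<x<2$ whenever $\kappa_i=0$: the right side is negative and nothing forces the component with $l_i\le -1$ to carry the $\alpha_2$-end. The paper's resolution is conceptually different: the new moduli space $\mathcal N^s$ of Theorem~\ref{thm-gluing2} has \emph{only} $\alpha_2$ positive ends ($m$ of them) and a single negative end on $\beta_1^{(k+2)m-1}$. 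Siegel's machinery is used solely to show $\mathcal N^s\neq\emptyset$ for $m=2$ (Proposition~\ref{non-zero-coefficient}); the inductive gluing then gives all $m$. With this choice every $F_i$ with positive ends has them on $\alpha_2$, so the ``which component carries $\alpha_2$'' issue disappears.

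Two further points you are missing. First, in the paper's Case~I the limiting inequality is $mb\ge d=(k+2)m-1$, giving $b\ge k+2$ as $m\to\infty$ (not $a\ge 2$); this already yields $b(1-1/a)\ge 2>x$. Second, in Case~II the paper must rule out $m_i=0$ on the component with $l_i\le -1$: if $m_i=0$ one gets a plane of area $1+l_i(x-1)<1$, contradicting Lemma~3.7 of \cite{HO19}. Only after forcing $m_i\ge 1$ does the area inequality $m_i(b-(k+2))+1+l_i(x-1)\ge 0$ yield the sharp bound $b(1-1/a)\ge x$. Your sketch does not surface this step, and with your mixed-end moduli space there is no analogue of it.
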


The theorem claims that inclusions give optimal embeddings, so it suffices to show that the condition is necessary. For this we will follow the scheme in section \ref{sec-proof}, but the key ingredient will be the following analogue of Theorem \ref{thm-gluing}.

Setting $d = (k+2)m -1$ (so that the Fredholm index is $0$) we define
\begin{equation} \label{dfn-moduli-3}
\mathcal N^s_{J, \ep, S}(\underbrace{\alpha_2, …, \alpha_2}_{{\tiny \mbox{$m$-many}}}; \beta_1^d) := {\small \left\{\begin{array}{l} \mbox{simple $J$-holomorphic curve $u$ with positive} \\ \mbox{ends $(\alpha_2, …, \alpha_2)$ and negative end $\beta_1^d$}  \end{array} \right\}}.
\end{equation}

\begin{theorem} \label{thm-gluing2} For any $m \geq 1$, there exist $\ep_m>0$ and $S_m \notin \Q$ such that, for any generic almost complex structure $J$ of the associated symplectic cobordism, the moduli space
\[ \mathcal N^s_{J, \ep_m, S}(\underbrace{\alpha_2, …, \alpha_2}_{{\tiny \mbox{$m$-many}}}; \beta_1^{k+1+2m})\]
defined in (\ref{dfn-moduli-3}) is nonempty whenever $S$ is very large.
\end{theorem}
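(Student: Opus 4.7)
The plan is to adapt the inductive strategy of Theorem \ref{thm-gluing}, replacing the McDuff pair-of-pants step by Siegel's more refined machinery. The base case $m=1$ is already supplied by Lemma \ref{lemma-initial-2}, which produces a rigid, somewhere-injective, genus-$0$ curve with one positive end on $\alpha_2$ and negative end on $\beta_1^{k+1}$; as in Proposition \ref{prop-gluing}, composing with short-to-short orbit cylinders of \cite{HK18-2} brings this negative end down to $\beta_1^{k+1}$ on $\partial E(\epsilon, \epsilon S)$ for any $\epsilon$ small and $S$ large, giving the starting curve $C_1 \in \mathcal{N}^s_{J, \epsilon, S}(\alpha_2; \beta_1^{k+1})$.

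The obstruction to extending the inductive argument verbatim is twofold. First, the McDuff pair-of-pants of \cite{McD18} in the symplectization of $E(\epsilon, \epsilon S)$ has Fredholm index $2$ rather than $0$ when $S$ is very large, since all $\lfloor \cdot / S \rfloor$ terms in its index formula vanish, so it is no longer rigid. Second, even setting rigidity aside, a naive concatenation of $m$ copies of $C_1$ through such pair-of-pants would produce a curve asymptotic to $\beta_1^{m(k+1)}$, whereas Lemma \ref{lemma-2} pins the required $\beta_1$-multiplicity (for Fredholm index zero with $m$ copies of $\alpha_2$) at $d = m(k+2) - 1$. Thus the $(m-1)$-fold discrepancy between $m(k+1)$ and $m(k+2)-1$ cannot be made up by any elementary combination of HK and McDuff building blocks without introducing spurious $\alpha_1$-ends. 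To bypass this we appeal to the machinery of Siegel \cite{Sie20, Sieip}: his higher-arity rational-SFT operations on the chain complex of $\partial E(a,b)$, together with explicit computations of the induced $\mathbb{L}_\infty$-augmentation from the filling, guarantee the nonemptyness of the moduli space of genus-$0$ curves with $m$ simple positive ends on $\alpha_2$ and one negative end on $\beta_1^{m(k+2)-1}$ in the ellipsoid cobordism, for any $\epsilon$ small and any $S$ sufficiently large.

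Given this Siegel input, the remainder of the argument is routine. Automatic regularity in the sense of Wendl \cite{wendlauto}, applicable because our Reeb orbits have odd Conley-Zehnder index, upgrades the output to an honest count for generic $J$. The action-and-index analysis on the limiting holomorphic building, parallel to Lemmas \ref{lemma-hol-build} and \ref{lemma-se-bottom} and substantially simplified by the large-$S$ hypothesis (which kills the $\lfloor \cdot / S \rfloor$ contributions), then forces the limit curve to be simple with a single negative end on the prescribed orbit. The hard step is therefore entirely concentrated in the Siegel nonvanishing theorem of \cite{Sieip}, which is an $\mathbb{L}_\infty$-algebra computation lying outside the techniques of the present paper.
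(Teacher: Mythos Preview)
Your proposal diverges from the paper's argument in two significant ways.

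\medskip

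\textbf{The inductive gluing does still apply.} The paper explicitly states that the inductive procedure of subsection~\ref{indgluing} carries over to the moduli spaces $\mathcal{N}^s$, so that it suffices to establish nonemptyness only for $m=2$. Your two objections to the induction are misplaced. First, the McDuff pair-of-pants is not used at a very large $S$; as in Theorem~\ref{thm-gluing}, one works at carefully chosen intermediate $S_m$ (lying just above the relevant covering multiplicity) where the ceiling condition $\lceil r_1/S_m\rceil + \lceil r_2/S_m\rceil = \lceil (r_1+r_2)/S_m\rceil$ holds and the pair-of-pants is rigid. Only at the end does one pass to large $S$ via the Hind--Kerman cylinders. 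Second, the multiplicity discrepancy you identify (between $m(k+1)$ and $m(k+2)-1$) is exactly accounted for by the $m-1$ additional HK cylinders used in the inductive steps, each of which increases the $\beta_1$-multiplicity by one. The genuine reason the induction cannot start at $m=1$ (which the paper leaves implicit) is that the step $m=1\to m=2$ would require gluing two copies of the \emph{same} cylinder $C_1$ through the McDuff pair-of-pants, and the resulting $\mathbb{Z}/2$-symmetry obstructs the obstruction-bundle gluing from producing a somewhere-injective curve with nonzero count.

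\medskip

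\textbf{The Siegel input is verified, not black-boxed.} For the single remaining case $m=2$, the paper invokes Siegel's framework but then carries out the relevant $\mathcal{L}_\infty$-algebra computation explicitly in Proposition~\ref{non-zero-coefficient}: one computes $\Phi^2_{\ep,\ep S}(\beta_{k,1},\beta_{k,1})$ via the recursive relation (item~(iv)) and checks that the coefficient of $A_{2k+3}$ is $(2k+3)(k^2+k)\neq 0$, so the algebraic condition~(\ref{algebra2}) holds. Your proposal instead invokes Siegel's nonvanishing for \emph{all} $m$ simultaneously without any verification; this is a substantially stronger claim (requiring the higher $\Phi^m$ terms) and is not what the paper establishes or needs. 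In short, you have shifted the entire burden onto an unverified citation, whereas the paper isolates the Siegel input to a single explicit computation and handles the rest by the elementary gluing techniques already developed.
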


Assuming this for now, we proceed to the proof of Theorem \ref{hamemb}. The arrangement is as in section \ref{ssec-outline}, that is, we have inclusions 
\[ E(\ep, \ep S) \subset V \subset E(a,b),\]
where $V$ is a perturbed unit disk bundle of the embedded Lagrangian torus, and we apply our neck stretching procedure to curves in $\mathcal N^s$ as the almost complex structure is deformed along $\partial V$.

The compactness result, Theorem \ref{thm-compact} holds equally well for the moduli space $\mathcal N^s$. For brevity we omit the proof, which follows the same strategy as Theorem \ref{thm-compact} even though it involves a different moduli space. We conclude the curves in $\mathcal N^s$ persist under deformations of the almost complex structure, and we obtain a nontrivial limit.

Lemma \ref{lemma-index} still applies in this case to describe the limit and we analyze the possibilities for the components $F_i$, $1 \le i \le T=d+1$ in the completion of $E(a,b) \setminus V$. Here $d= (k+2)m -1$ is the degree of the covering of the negative end of curves in $\mathcal N^s$, and hence also the negative end of the component $F_0$ inside $V$.
The component $F_0$ has exactly $T=(d+1)$-many positive ends on $\partial V$, denoted by $\{\gamma_{(-k_i, -l_i)}\}_{i =1}^{d+1}$. Since these orbits together bound a contractible loop $\beta_1^d$ inside $U$, we know that 
\begin{equation} \label{top-1}
\sum_{i=1}^{d+1} k_i  = \sum_{i=1}^{d+1} l_i = 0.
\end{equation}
Now, we have two cases as follows.  

\medskip

\noindent In the first case suppose that $l_i \geq 0$ for all $i \in \{1, …, d+1\}$ (thus by (\ref{top-1}), $l_i=0$ for all $i$), then by an appropriate choice of the metric on $V$ as in Section 2 in \cite{HO19}, we know that 
\[ {\rm Area}(F_0) = \sum_{i=1}^{d+1} |k_i| \frac{\ep}{2} - d \ep \]
and, in particular, $\sum_{k_i>0} k_i \geq d$. For $i \in \{1, ..., d+1\}$ , by (\ref{dfn-f-ind}) and (\ref{c1-cz-torus-2}), the corresponding Fredholm index of $F_i$ is 
\begin{equation} \label{ind-Fi}
 {\rm ind}(F_i) = m_i(4+2k) - 1 + 2k_i + 2l_i
 \end{equation}
where $m_i$ is the number of positive ends of component $F_i$. In the case when $k_i >0$, since $l_i=0$, we see that if $m_i$ is non-zero, then ${\rm ind}(F_i) \geq 4 - 1 =3$, so we cannot have ${\rm ind}(F_i) = 1$ as promised in Lemma \ref{lemma-index}. Therefore, for these $F_i$, we have $m_i=0$. In other words, these $F_i$ do not have any positive ends on $\partial E(a,b)$ and are planes. Moreover, for these $F_i$ to have ${\rm ind}(F_i) = 1$, we must have $k_i = 1$. Then there is only one remaining component, say $F_{d+1}$, with a negative end $\gamma_{(d, 0)}$ on $\partial V$ and $m$-many positive ends on $\partial E(a,b)$. The following Figure \ref{figure-limit-conf-2} illustrates this configuration (cf.~Figure \ref{figure-limit-conf-1}). 
\begin{figure}[h]
\includegraphics[scale=0.75]{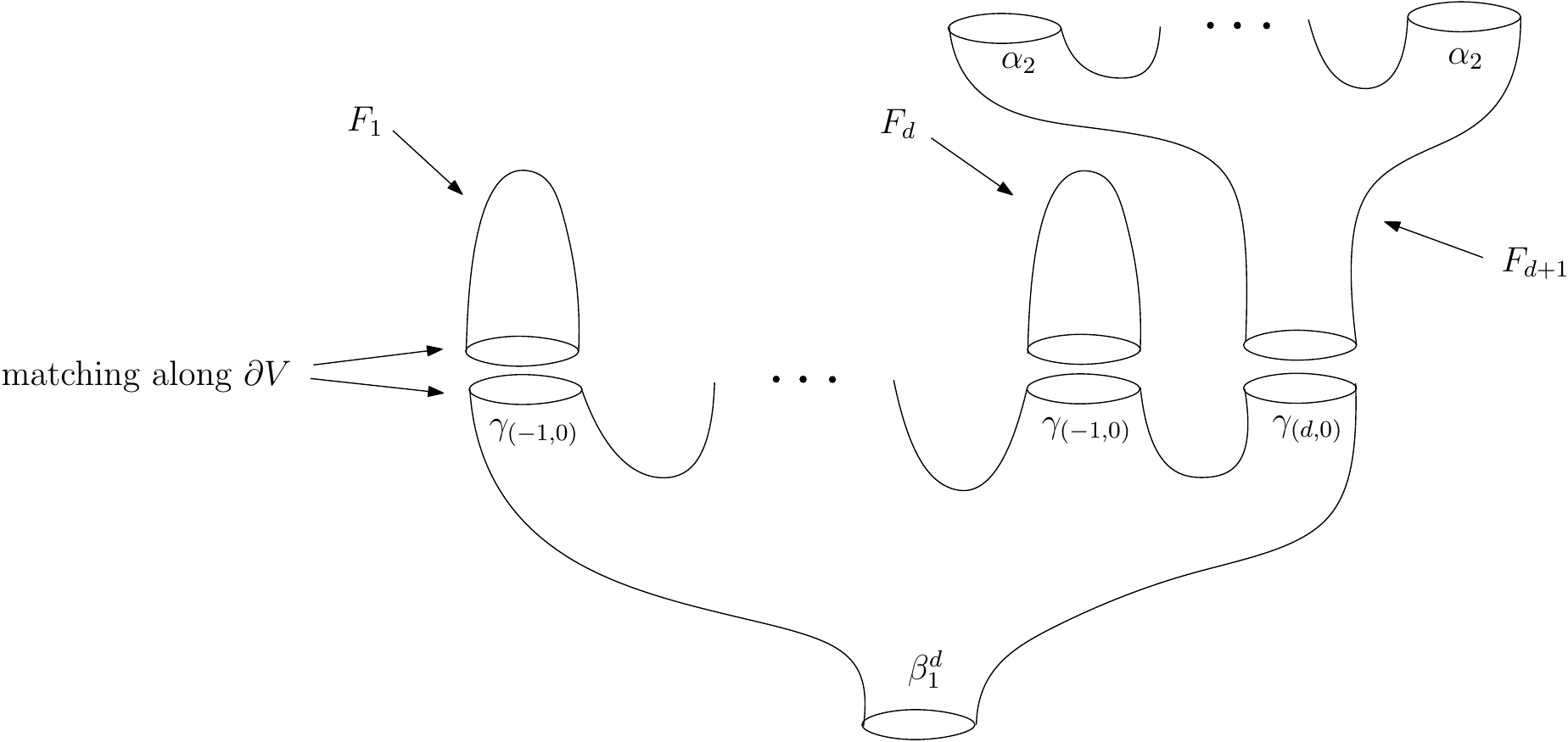}
\caption{One possible configuration of $C_{\rm lim}$}\label{figure-limit-conf-2}
\end{figure}

\noindent Since ${\rm Area}(F_d)\geq 0$, we have $mb \geq d$. Together with $d = (k+2)m-1$, we have 
\[ mb \geq m(k+2) - 1, \,\,\,\,\mbox{which is} \,\,\,\, b \geq k+2 - \frac{1}{m}. \]
Letting $m \to \infty$, we get $b \geq k+2 = \frac{b}{a} +2$. This is equivalent to 
\[ b \geq \frac{2}{1- \frac{1}{a}} \geq \frac{x}{1 - \frac{1}{a}}, \]
thus we get the desired conclusion. 

\medskip

\noindent In the second case we suppose there exists some $i \in \{1, …, d+1\}$ such that $\gamma_{(-k_i, -l_i)}$ has $l_i <0$. Assume that the corresponding component $F_i$ has $m_i$-many positive ends, then we get
\begin{equation}\label{area-Fi}
{\rm Area}(F_i) = m_i b + k_i + l_i x.
\end{equation}
The Fredholm index formula (\ref{ind-Fi}) and Lemma \ref{lemma-index} imply that $m_i(k+2) + k_i + l_i =1$. Note that $m_i \neq 0$, otherwise we have 
\[ {\rm Area}(F_i) = k_i + l_i x = 1- l_i + l_i x = 1 + l_i(x-1) <1 \]
since $l_i<0$ and $x>1$ by our hypothesis. In other words, we have a finite energy plane $F_i$ with area strictly less than 1, which contradicts Lemma 3.7 in \cite{HO19}. Substituting the Fredholm index relation into (\ref{area-Fi}), we get 
\[ {\rm Area}(F_i) = m_i (b-(k+2)) + (1-l_i + l_i x) \geq 0.\]
This simplifies to $1 + l_i(x-1) \geq m_i((k+2)-b)$. Since $l_i<0$, that is $l_i \leq -1$, we get $2-x \geq 1 -l_i (x-1)$. Hence, since $m_i \geq 1$, either $$b > k+2 = \frac{b}{a}+2$$ which gives $b > \frac{2}{1 - \frac{1}{a}}$ or else
\[ 2- x \geq m_i((k+2)-b) \geq \frac{b}{a} + 2 - b, \]
and we get $b \geq \frac{x}{1 - \frac{1}{a}}$, which is the desired conclusion. 

\medskip

Finally we discuss the proof of Theorem \ref{thm-gluing2}. By Example \ref{ex-1} the moduli space is nonempty at least when $m=1$, that is, cylinders exist. The inductive gluing procedure described in subsection \ref{indgluing} still applies. Hence, it suffices to show that $\mathcal N^s$ is nonempty when $m=2$. In other words, we need to show the existence of a simple $J$-holomorphic curve $u$ in the symplectic cobordism resulting from the embedding $E(\ep, \ep S) \hookrightarrow E(1,k)$ for $k \in \N_{\geq 2}$ with two positive ends $\alpha_2$ (simply covered long orbit of $E(1,k)$) and one negative end on $\beta_1^{2k+3}$ (multiply covered short orbit of $E(\ep, \ep S)$).

Implicit in Theorem 1.3.1 of \cite{Sie20} is that a sufficient condition for the existence of such a curve is  given by
\begin{equation} \label{algebra2}
\left<(\Phi^2_{\ep, \ep S} \circ (\Psi^1_{1,k} \odot \Psi^1_{1,k}))(A_{k+1} \odot A_{k+1}), A_{2k+3} \right> \neq 0.
\end{equation}

Here $\Psi_{1,k} = \{\Psi^n_{1,k}\}_{n \geq 1}$ is an $\mathcal L_{\infty}$ homomorphism from $V_{1,k}$ to $V$. We have that $V_{1,k}$ is a polynomial algebra with generators $\{A_i\}_{i \ge 1}$, where $A_i$ corresponds to the unique closed Reeb orbit on $E(1,k+ \ep)$ with Conley-Zehnder index $2i+1$. The algebra $V$ is described in \cite{Sie20}, subsection 2.2. It has generators $\alpha_{i,j}$ for $i,j \ge 1$ and $\beta_{i,j}$ for $i,j \ge 0$ and not both $0$. The $\mathcal L_{\infty}$ homomorphism $\Phi_{\ep, \ep S} = \{\Phi^n_{\ep, \ep S}\}_{n \geq 1}$ maps from $V$ to $V_{\ep, \ep S}$.  The algebra $V_{\ep, \ep S}$ can again by identified with the polynomial algebra generated by the $A_i$, but now we think of the $A_i$ as corresponding to Reeb orbits on $E(\ep, \ep S)$.
The notation ``$\odot$'' means the symmetric tensor product.

Our morphisms can be defined inductively as follows. 
\begin{itemize}
\item[(i)] $\Psi_{1,k}^1(A_q) = C_{q; 1,k} \beta_{i(q), j(q)}$, for some $C_{q; 1,k} \neq 0$.
\item[(ii)] $\Phi_{\ep, \ep S}^{1}(\beta_{i,j}) = \frac{i(q)! j(q)!}{i! j! C_{q; \ep, \ep S}} A_q$ with $q = i+j$ and $C_{q; \ep, \ep S} \neq 0$. 
\item[(iii)] $\Phi_{\ep, \ep S}^2(\beta_{i(q_1), j(q_1)}, \beta_{i(q_2), j(q_2)}) = 0$ for $q_1, q_2 \in \Z_{\geq 1}$.
\item[(iv)] If $(i_1,j_1), (i_2, j_2) \in \Z^2_{\geq 0} \backslash \{(0,0)\}$, then we have the following recursive formula, 
\[ j_1 \Phi^2_{\ep, \ep S}(\beta_{i_1-1, j_1}, \beta_{i_2, j_2}) - i_1 \Phi^2_{\ep, \ep S}(\beta_{i_1, j_1-1}, \beta_{i_2, j_2}) + (i_i j_2 - j_1 i_2) \Phi^1_{\ep, \ep S}(\beta_{i_1+i_2, j_1 + j_2}) = 0. \]
\end{itemize}

\smallskip

The optimal index $(i(q), j(q))$ is defined by the pair $(i,j)$ that realizes the minimum $\min_{i+j=q} \max\{ia, jb\}$ for $(a,b) = (1,k)$ or $(\ep, \ep S)$. Observe that 
\begin{equation} \label{q-th}
\min_{i+j=q} \max\{ia, jb\} = \,\left\{\begin{array}{l} \mbox{$q$-th smallest entry of the infinite} \\ \mbox{sequence $\{ic \, | i \in \Z_{\geq 1}, \, c \in \{a,b\}\}$}.\end{array} \right\}. 
\end{equation} 

\begin{ex} \label{ex-opt-pair} (a) For $(a,b) = (1,k)$ and $q = k+1$, we have $(i(q), j(q)) = (k,1)$. In fact, we have the infinite sequence
\[  \{ic \, | i \in \Z_{\geq 1}, \, c \in \{a,b\}\} = \{1, 2, \cdots, k-1, k, k, k+1,…\} \]
where the first $k = k  \cdot 1$ and the second $k = 1 \cdot k$. Therefore, the $(k+1)$-th smallest entry is $k$. Up to (\ref{q-th}), the only way to obtain $k$ for $\max\{i, jk\}$ is when $i = k$ and $j=1$. Thus we get the desired optimal index. 

\medskip

(b) For $(a,b) = (1,S)$ and $q = 2k+3$ with $S> 2k+3$, we have $(i(q), j(q)) = (2k+3,0)$. In fact, we have the infinite sequence
\[  \{ic \, | i \in \Z_{\geq 1}, \, c \in \{a,b\}\} = \{1, 2, \cdots, 2k+2, 2k+3, …, S, … \}.\]
Therefore, the $(2k+3)$-th smallest entry is $2k+3$. Up to (\ref{q-th}), the only way to obtain $2k+3$ for $\max\{i, jS\}$ is when $i = 2k+3$ and $j=0$. Thus we get the desired optimal index. The same conclusion holds for $(a,b) = (\ep, \ep S)$. 
\end{ex}

We can now confirm that
the algebraic condition (\ref{algebra2}) holds. 

\begin{prop} [cf.~Example 3.2.4 in \cite{Sie20}] \label{non-zero-coefficient} 
\[ \left<(\Phi^2_{\ep, \ep S} \circ (\Psi^1_{1,k} \odot \Psi^1_{1,k}))(A_{k+1} \odot A_{k+1}), A_{2k+3} \right> = \frac{C^2_{k+1; 1,k}}{C_{2k+3; \ep, \ep S}} \cdot (2k+3)(k^2+k), \]
in particular, the coefficient is non-zero. Hence, the algebraic condition (\ref{algebra2}) holds.
\end{prop}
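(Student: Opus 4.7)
The plan is a direct bookkeeping computation, reading off values from the inductive rules (i)--(iv) together with the optimal-pair data in Example \ref{ex-opt-pair}. First, (i) and Example \ref{ex-opt-pair}(a) immediately give
\[ (\Psi^1_{1,k} \odot \Psi^1_{1,k})(A_{k+1} \odot A_{k+1}) = C^2_{k+1;1,k}\, \beta_{k,1} \odot \beta_{k,1}, \]
so the entire calculation reduces to evaluating $\Phi^2_{\ep,\ep S}(\beta_{k,1}, \beta_{k,1})$ and extracting its $A_{2k+3}$-component.

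The key observation is that since $S$ is very large, Example \ref{ex-opt-pair}(b) says the optimal index for every $q \geq 1$ with respect to $(\ep, \ep S)$ is $(q,0)$; in particular (iii) reduces to the base case
\[ \Phi^2_{\ep,\ep S}(\beta_{p,0}, \beta_{q,0}) = 0 \quad \text{for all } p,q \geq 1. \]
I would then apply the recursion (iv) twice. A first application with $(i_1,j_1)=(k+1,1)$, $(i_2,j_2)=(k,1)$ expresses $\Phi^2_{\ep,\ep S}(\beta_{k,1},\beta_{k,1})$ as a combination of $\Phi^2_{\ep,\ep S}(\beta_{k+1,0},\beta_{k,1})$ and $\Phi^1_{\ep,\ep S}(\beta_{2k+1,2})$. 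A second application with $(i_1,j_1)=(k+1,1)$, $(i_2,j_2)=(k+1,0)$, combined with the base case above, further reduces $\Phi^2_{\ep,\ep S}(\beta_{k,1},\beta_{k+1,0})$ to $(k+1)\,\Phi^1_{\ep,\ep S}(\beta_{2k+2,1})$. At this point formula (ii) with optimal index $(2k+3,0)$ evaluates
\[ \Phi^1_{\ep,\ep S}(\beta_{2k+2,1}) = \frac{2k+3}{C_{2k+3;\ep,\ep S}}\, A_{2k+3}, \qquad \Phi^1_{\ep,\ep S}(\beta_{2k+1,2}) = \frac{(2k+3)(k+1)}{C_{2k+3;\ep,\ep S}}\, A_{2k+3}, \]
and assembling these pieces yields
\[ \Phi^2_{\ep,\ep S}(\beta_{k,1}, \beta_{k,1}) = \frac{(2k+3)(k+1)\bigl((k+1)-1\bigr)}{C_{2k+3;\ep,\ep S}}\, A_{2k+3} = \frac{(2k+3)(k^2+k)}{C_{2k+3;\ep,\ep S}}\, A_{2k+3}, \]
from which the claimed identity follows upon multiplying by $C^2_{k+1;1,k}$.

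There is no real obstacle here beyond the symbolic bookkeeping: both the input pair $(k,1)$ and the intermediate indices $(2k+1,2),(2k+2,1)$ arising in the recursion lie in the non-optimal sector for $(\ep, \ep S)$, so formulas (ii) and (iv) apply cleanly, and the recursion terminates after two steps precisely because the very restrictive optimal-index pattern forced by $S \gg 0$ funnels everything into $(\ast,0)$-pairs. The only points worth flagging are the symmetry of $\Phi^2_{\ep,\ep S}$ (used to identify $\Phi^2_{\ep,\ep S}(\beta_{k+1,0},\beta_{k,1})$ with $\Phi^2_{\ep,\ep S}(\beta_{k,1},\beta_{k+1,0})$) and the verification that $S > 2k+3$ so that Example \ref{ex-opt-pair}(b) applies at the intermediate indices — both of which are automatic. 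The final coefficient $(2k+3)(k^2+k)$ is manifestly positive for $k \geq 2$, so the pairing (\ref{algebra2}) is nontrivial.
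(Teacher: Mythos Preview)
Your proof is correct and follows essentially the same route as the paper's own argument: apply (i) with Example \ref{ex-opt-pair}(a) to reduce to $\Phi^2_{\ep,\ep S}(\beta_{k,1},\beta_{k,1})$, run the recursion (iv) twice with exactly the index choices $(i_1,j_1)=(k+1,1)$ against $(k,1)$ and then $(k+1,0)$, invoke (iii) via the optimal pair $(k+1,0)$ to kill $\Phi^2_{\ep,\ep S}(\beta_{k+1,0},\beta_{k+1,0})$, and evaluate the remaining $\Phi^1$-terms via (ii). The paper's proof is step-for-step the same, including the use of symmetry to swap the arguments of $\Phi^2_{\ep,\ep S}(\beta_{k+1,0},\beta_{k,1})$.
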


\begin{proof} By the item (i) above and (a) in Example \ref{ex-opt-pair}
we have 
\[ \Psi^1_{1,k}(A_{k+1})= C_{q; 1,k} \beta_{i(k+1), j(k+1)} = C_{k+1; 1,k} \beta_{k,1}. \]
Next, we aim to compute $\Phi^2_{\ep, \ep S}(\beta_{k,1}, \beta_{k,1})$ (and leave the consideration of the non-zero constant $C_{k+1; 1,k}$ at the end). By the item (iv) above, we have 
\begin{equation} \label{recur-1}
\Phi^2_{\ep, \ep S}(\beta_{k,1}, \beta_{k,1}) = (k+1) \Phi^2_{\ep, \ep S}(\beta_{k+1,0}, \beta_{k,1}) - \Phi^1_{\ep,\ep S}(\beta_{2k+1,2}). 
\end{equation} 
Since $\Phi^2_{\ep, \ep S}(\beta_{k+1,0}, \beta_{k,1}) = \Phi^2_{\ep, \ep S}(\beta_{k,1}, \beta_{k+1, 0})$ by its defining property, by the item (iv) again, we have 
\begin{equation} \label{recur-2}
\Phi^2_{\ep, \ep S}(\beta_{k+1,0}, \beta_{k,1}) = (k+1) \Phi^2_{\ep, \ep S}(\beta_{k+1, 0}, \beta_{k+1, 0}) + (k+1) \Phi^1_{\ep, \ep S}(\beta_{2k+2, 1}). 
\end{equation}
Then (\ref{recur-1}) and (\ref{recur-2}) yield 
\begin{align*}
\Phi^2_{\ep, \ep S}(\beta_{k,1}, \beta_{k,1})  & = (k+1)^2 \Phi^2_{\ep, \ep S}(\beta_{k+1, 0}, \beta_{k+1, 0}) + (k+1)^2 \Phi^1_{\ep, \ep S}(\beta_{2k+2,1}) - \Phi^1_{\ep, \ep S}(\beta_{2k+1, 2}) \\
& = (k+1)^2 \Phi^1_{\ep, \ep S}(\beta_{2k+2,1}) - \Phi^1_{\ep, \ep S}(\beta_{2k+1, 2})\\
& = \frac{(k+1)^2(2k+3)! 0!}{(2k+2)!1! C_{{2k+3}; \ep, \ep S}} A_{2k+3} - \frac{(2k+3)! 0!}{(2k+1)!2! C_{{2k+3}; \ep, \ep S}} A_{2k+3}\\
& = \left(\frac{(2k+3)((k+1)^2 - (k+1))}{C_{2k+3; \ep, \ep S}}\right) A_{2k+3}\\
& = \frac{(2k+3)(k^2+k)}{C_{2k+3; \ep, \ep S}}A_{2k+3}.
\end{align*}
The second equality is due to the item (iii) above, and the third equality is due to the item (ii) above together with (b) in Example \ref{ex-opt-pair}. Finally, since $\Phi^2_{\ep, \ep S}$ is linear, a consideration of the constant $C_{k+1; 1,k}$ leads to the required conclusion. 
\end{proof}

\begin{remark} The constant $\frac{C^2_{k+1; 1,k}}{C_{2k+3; \ep, \ep S}}$ in the conclusion of Proposition \ref{non-zero-coefficient} can be computed explicitly; see the hypothesis of Theorem 5.3.2 in \cite{Sie20}. \end{remark}

\bibliographystyle{amsplain}
\bibliography{biblio_hsi}
\noindent\\
\end{document}